  \theoremstyle{plain}
    \newtheorem{thm}{Theorem}[section]
    \newtheorem{prop}[thm]{Proposition}
   \newtheorem{lemma}[thm]{Lemma}
    \newtheorem{corollary}[thm]{Corollary}
    \newtheorem{subsec}[thm]{}
\theoremstyle{definition}
    \newtheorem{defn}[thm]{Definition}
        \newtheorem{remark}[thm]{Remark}
    \newtheorem{exam}[thm]{Example}
\theoremstyle{remark}
\title{}
\author{}
\date{}
\begin{document}
\title{Leibniz algebras with derivations}

\author{Apurba Das}
\address{Department of Mathematics and Statistics,
Indian Institute of Technology, Kanpur 208016, Uttar Pradesh, India.}
\email{apurbadas348@gmail.com}

%\curraddr{}
%\email{}

\subjclass[2010]{17A32, 17B40, 13B02, 18G60, 16S80}
\keywords{Leibniz algebras, Leibniz cohomology, LeibDer pair, Extensions, Deformations, Homotopy derivations, Categorifications}

\noindent

\thispagestyle{empty}

\maketitle

\begin{abstract}
In this paper, we consider Leibniz algebras with derivations. A pair consisting of a Leibniz
algebra and a distinguished derivation is called a LeibDer pair. We define a cohomology theory
for LeibDer pair with coefficients in a representation. We study central extensions and abelian extensions of a LeibDer
pair. In the next, we generalize the formal deformation theory to
LeibDer pairs in which we deform both the Leibniz bracket and the distinguished derivation. It is governed
by the cohomology of LeibDer pair with coefficients in itself. Finally, we consider homotopy
derivations on sh Leibniz algebras and $2$-derivations on Leibniz $2$-algebras. The category of
$2$-term sh Leibniz algebras with homotopy derivations is equivalent to the category of Leibniz
$2$-algebras with $2$-derivations.
\end{abstract}

\tableofcontents

\vspace{0.2cm}

\section{Introduction}
Leibniz algebras (also called Loday algebras) are noncommutative analog of Lie algebras. They were introduced by Jean-Louis Loday \cite{loday-une}. In the same paper, he introduces a homology theory for Leibniz algebras, a noncommutative generalization of the Lie algebra homology. In \cite{loday-pira} Loday and Pirashvili introduced a cohomology theory for Leibniz algebras with coefficients in a representation. This cohomology is also a noncommutative analog of the Chevalley-Eilenberg cohomology for Lie algebras. The classical deformation theory of Gerstenhaber \cite{gers} has been extended to Leibniz algebras in \cite{bala-leib}. The notion of sh Leibniz algebras (strongly homotopy Leibniz algebras) are introduced in \cite{ammar-poncin}. Strongly homotopy Leibniz algebras are related to categorification of Leibniz algebras \cite{sheng-liu}. See \cite{loday-cup,branes,ayupov} for some interesting results about Leibniz algebras.

\medskip

Algebraic structures are useful via their derivations. Derivations are also useful in constructing
homotopy Lie algebras \cite{voro}, deformation formulas \cite{coll} and differential Galois theory \cite{magid}. They also play an essential role in control theory and gauge theories in quantum field theory \cite{ayala-1, ayala-2}. In \cite{doubek-lada,loday} the authors study algebras with derivations from operadic point of view. Recently, Lie algebras with derivations (called LieDer pairs) are studied from cohomological point of view \cite{tang} and extensions, deformations of LieDer pairs are considered. The results of \cite{tang} has been extended to associative algebras with derivations (called AssDer pairs) in \cite{das-mandal}.

\medskip

In this paper, we consider Leibniz algebras with derivations. More precisely, we consider a pair
$(\mathfrak{g}, \phi_\mathfrak{g})$ where $\mathfrak{g}$ is a Leibniz algebra and $\phi_\mathfrak{g} : \mathfrak{g} \rightarrow \mathfrak{g}$  is a derivation for the Leibniz algebra bracket on $\mathfrak{g}$. We call such a pair $(\mathfrak{g}, \phi_\mathfrak{g})$ a LeibDer pair. We define representations and cohomology for a LeibDer pair. This cohomology is a variant of the Leibniz algebra cohomology. When considering the cohomology of a LeibDer pair $(\mathfrak{g}, \phi_\mathfrak{g})$ with coefficients in itself, we show that the cohomology inherits a degree $-1$ graded Lie bracket.

\medskip

Central extensions and abelian extensions of Leibniz algebras was defined in \cite{loday-pira}. They are
related to the second cohomology group of Leibniz algebras with suitable coefficients. In Section
\ref{sec-extensions}, we extend results of \cite{loday-pira} in the context of LeibDer pairs. We prove that isomorphism classes of
central extensions of a LeibDer pair by a trivial LeibDer pair are classified by the second cohomology
group of the LeibDer pair with coefficients in the trivial representation (cf. Theorem \ref{final-theorem-central}). Next we
discuss the extension of a pair of derivations in a central extension of Leibniz algebras. Given a
central extension $0 \rightarrow \mathfrak{a} \xrightarrow{i} \mathfrak{h} \xrightarrow{p} \mathfrak{g} \rightarrow 0$ of Leibniz algebras and a pair of derivations $(\phi_\mathfrak{g}, \phi_\mathfrak{a}) \in \mathrm{Der} (\mathfrak{g}) \times \mathrm{Der} (\mathfrak{a})$, we associate a second cohomology class in the cohomology of the Leibniz algebra $\mathfrak{g}$ with coefficients in the trivial representation $\mathfrak{a}$. When this cohomology class is null, the pair $(\phi_\mathfrak{g}, \phi_\mathfrak{a}) \in \mathrm{Der} (\mathfrak{g}) \times \mathrm{Der} (\mathfrak{a})$ of derivations extends to a derivation $\phi_\mathfrak{h} \in \mathrm{Der}(\mathfrak{h})$ which makes the above sequence into an exact sequence of LeibDer pairs (cf. Theorem \ref{final-theorem-pair}). In subsection \ref{subsec-abelian-extension}, we consider abelian extensions of a LeibDer pair by any arbitrary representation. In Theorem \ref{final-theorem-abelian} we prove that isomorphism classes of abelian extensions are classified by the second cohomology group of the LeibDer pair.

\medskip

The classical deformation theory of Gerstenhaber \cite{gers} has been extended to Leibniz algebras in \cite{bala-leib}.
In Section \ref{sec-deformation}, we generalize this deformation theory to LeibDer pairs. Our results in this section are
analogous to the standard ones. The vanishing of the second cohomology implies that the LeibDer
pair is rigid, i.e, any deformation is equivalent to the undeformed one (cf. Theorem \ref{two-rigidity}). Given a finite order deformation, we associate a third cohomology class in the cohomology of the LeibDer
pair, called the obstruction class (cf. Proposition \ref{ob-three-cocycle}). When this class is null, the given deformation
extends to a deformation of next order (cf. Theorem \ref{ob-three-cocycle-thm}).

\medskip

The notion of sh Leibniz algebras was introduced in \cite{ammar-poncin}. In \cite{sheng-liu} the authors consider $2$-term sh Leibniz algebras and relate them with categorified Leibniz
algebras. In Section \ref{sec-homo-der}, we introduce homotopy derivations on $2$-term sh Leibniz algebras. Homotopy derivations on skeletal sh Leibniz algebras are characterized by third cocycles of LeibDer pairs
(cf. Proposition \ref{skeletal-prop}). We introduce crossed modules of LeibDer pairs and prove that strict homotopy
derivations on strict sh Leibniz algebras are characterized by crossed modules of LeibDer pairs (cf Theorem \ref{thm-crossed-leibder}).

\medskip

The notion of Lie $2$-algebras or categorified Lie algebras was first introduced by Baez and Crans \cite{baez-crans}.
They showed that the category of $2$-term $L_\infty$-algebras is equivalent to the category of Lie $2$-algebras. This result has been extended to various other algebraic structures including associative algebras, Leibniz algebras \cite{das-mandal, sheng-liu}. In Section \ref{sec-2-der}, we introduce LeibDer $2$-pairs. They are categorification of LeibDer pairs. Finally, we prove that the category of LeibDer $2$-pairs and the category of $2$-term sh Leibniz algebras with homotopy derivations are equivalent (cf. Theorem \ref{thm-equiv-cat}).

\medskip

Throughout the paper, all vector spaces and linear maps are over a field $\mathbb{K}$ of characteristic zero unless otherwise stated.

\section{Leibniz algebras and their cohomology}
In this section, we recall Leibniz algebras and their cohomology. Our main references are \cite{loday-pira},\cite{bala-leib}.
\begin{defn}
A Leibniz algebra $(\mathfrak{g}, [ ~,~ ])$ consists of a vector space $\mathfrak{g}$ and a bilinear map $[~ , ~ ] :
\mathfrak{g} \times \mathfrak{g} \rightarrow \mathfrak{g}$ satisfying the following identity (Leibniz identity)
\begin{align}\label{leib-defn-id}
 [[x, y], z] = [[x, z], y] + [x, [y, z]]  , ~~~ \text{ for all } x, y, z \in \mathfrak{g}.
\end{align}
\end{defn}

Such Leibniz algebras are called right Leibniz algebras as the identity (\ref{leib-defn-id}) is equivalent to the fact that the maps $[~, z] : \mathfrak{g} \rightarrow \mathfrak{g}$ by fixing right coordinate are derivations for the bracket on $\mathfrak{g}$. Thus, one may also define left Leibniz algebras. In this paper, by a Leibniz algebra, we shall always mean a right Leibniz algebra. However, all the results of the present paper can be easily generalize to left Leibniz algebras by suitable modifications.

\begin{defn}\label{defn-leib-repn}
 Let $(\mathfrak{g}, [~, ~])$ be a Leibniz algebra. A representation of it consists of a vector space
$M$ together with maps (called left and right actions)
\begin{align*}
[~,~] : \mathfrak{g} \times M \rightarrow M \qquad  [~,~] : M \times \mathfrak{g} \rightarrow M
\end{align*}
satisfying the following set of identities
\begin{itemize}
\item[(MLL)] $ [[m, x], y] = [[m, y], x] + [m, [x, y]],$
\item[(LML)] $ [[x, m], y] =  [[x, y], m] + [x, [m, y]],$
\item[(LLM)] $ [[x, y], m] = [[x, m], y] + [x, [y, m]],$
\end{itemize}
for any $x, y \in \mathfrak{g}$ and $m \in  M$.
\end{defn}
Note that $\mathfrak{g}$ is a representation of itself with left and right actions are given by the bracket on $\mathfrak{g}$.

Let $(\mathfrak{g}, [~, ~])$ be a Leibniz algebra and $M$ be a representation of it. Define the $n$-th cochain group
\begin{align*}
C^n (\mathfrak{g}, M) := \mathrm{Hom} ( \mathfrak{g}^{\otimes n }, M), ~~~ \text{ for } n \geq 0
\end{align*}
and a map $\delta_L : C^n (\mathfrak{g}, M) \rightarrow C^{n+1} (\mathfrak{g}, M)$ by
\begin{align*}
(\delta_L f) (x_1, \ldots, x_{n+1}) = [x_1, f(x_2, \ldots, x_{n+1})] + \sum_{i=2}^{n+1} (-1)^i [ f( x_1, \ldots, \hat{x_i}, \ldots, x_{n+1}), x_i ] \\
+ \sum_{i < j} (-1)^{j+1} f(x_1, \ldots, x_{i-1}, [x_i, x_j], x_{i+1}, \ldots, \hat{x_j}, \ldots, x_{n+1}),
\end{align*}
for $ x_1, \ldots,  x_{n+1} \in \mathfrak{g}$. Then one has $(\delta_L)^2 = 0$.
Therefore, $(C^\ast (\mathfrak{g}, M), \delta_L)$ is a cochain
complex. The cohomology of the Leibniz algebra $\mathfrak{g}$ with coefficients in $M$ is denoted by
\begin{align*}
H^n (\mathfrak{g}, M) := \frac{\text{ker} \{ \delta_L : C^n (\mathfrak{g}, M) \rightarrow C^{n+1} (\mathfrak{g}, M) \}  }{\text{Im} \{ \delta_L : C^{n-1} (\mathfrak{g}, M) \rightarrow C^n (\mathfrak{g}, M) \} }.
\end{align*}

\medskip

When we consider the cohomology of a Leibniz algebra $\mathfrak{g}$ with coefficients in itself, the graded space $C^\ast (\mathfrak{g}, \mathfrak{g}) = \oplus_n C^n(\mathfrak{g}, \mathfrak{g})$ of cochain groups carries a degree $-1$ graded Lie bracket given by $\llbracket f, g \rrbracket = f \bullet g -  (-1)^{(m-1)(n-1)} g \bullet f,$ for $f \in C^m(\mathfrak{g} , \mathfrak{g} ), ~ g \in  C^n(\mathfrak{g} , \mathfrak{g} ),$ where
\begin{align*}
&(f \bullet g) (x_1, \ldots, x_{m+n-1}) = \\
& \sum_{i=1}^m (-1)^{(i-1)(n-1)} \sum_{  \mathrm{Sh} (n-1, m-i)} \mathrm{sgn} (\sigma) f (x_1, \ldots, x_{i-1}, g (x_i, x_{\sigma (i+1)}, \ldots, x_{\sigma (i+n-1} ), x_{\sigma (i+n)}, \ldots, x_{\sigma (m+n-1} ),
\end{align*}
for $x_1, \ldots, x_{m+n-1} \in \mathfrak{g}$. If we denote the Leibniz bracket on $\mathfrak{g}$ by the bilinear map $\mu : \mathfrak{g}^{\otimes 2} \rightarrow \mathfrak{g}$ (i.e, $\mu (x,y) = [x, y]$, for $x, y \in \mathfrak{g}$) then the Leibniz identity for the bracket is equivalent to $\llbracket \mu, \mu \rrbracket = 0$, where $\mu$ is considered as an element in $C^2 (\mathfrak{g}, \mathfrak{g} ).$ With this notation, the differential (with coefficients in $\mathfrak{g}$) is given by
\begin{align*}
\delta_L f = (-1)^{n-1} \llbracket \mu, f \rrbracket,  ~~~ \text{ for } f \in C^n(\mathfrak{g}, \mathfrak{g}).
\end{align*}
This implies that the graded space of cohomology $H^\ast (\mathfrak{g}, \mathfrak{g})$ carries a degree $-1$ graded Lie bracket.

\section{LeibDer pairs}
In this section, we consider LeibDer pairs. We define representations and cohomology of a LeibDer pair. Finally the cohomology of a LeibDer pair with coefficients in itself carries a degree $-1$ graded Lie bracket.

Let $\mathfrak{g}$ be a Leibniz algebra. A linear map $\phi_\mathfrak{g} : \mathfrak{g} \rightarrow \mathfrak{g}$ is a derivation on $\mathfrak{g}$ if it satisfies
\begin{align*}
\phi_\mathfrak{g} [x, y] = [ \phi_\mathfrak{g} (x), y] + [ x, \phi_\mathfrak{g} (y)], ~~~ \text{ for } x, y \in \mathfrak{g}.
\end{align*}
Note that derivations are $1$-cocycles in the cohomology complex of $\mathfrak{g}$ with coefficients in $\mathfrak{g}$. We call a pair $(\mathfrak{g}, \phi_\mathfrak{g})$ of a Leibniz algebra $\mathfrak{g}$ and a derivation $\phi_\mathfrak{g}$, a LeibDer pair. When the Leibniz algebra bracket is skew-symmetric, one gets the notion of LieDer pair \cite{tang}. Thus, LeibDer pairs are non-skewsymmetric analog of LieDer pairs.

\begin{exam}

\end{exam}

$\bullet$ Let $(\mathfrak{g}, [~, ~])$ is a Leibniz algebra. Then for any $x \in \mathfrak{g}$, the linear map $\phi_x := [~, x] : \mathfrak{g} \rightarrow \mathfrak{g}$ is a derivation on $\mathfrak{g}$. Hence, $(\mathfrak{g}, \phi_x)$ is a LeibDer pair.

$\bullet$ Any associative dialgebra gives rise to a Leibniz algebra in the same way an associative algebra gives a Lie algebra. An associative dialgebra is a vector space $D$ together with two bilinear maps $\dashv , \vdash : D \times D \rightarrow D$ satisfying five associative style identities \cite{loday-dialgebra}. A linear map $d: D \rightarrow D$ is a derivation for the associative dialgebra if $d$ is a derivation for both the products $\dashv$ and $\vdash$.

If $(D, \dashv, \vdash)$ is an associative dialgebra, then $D$ equipped with the bracket
\begin{align*}
[x, y]:= x \dashv y - y \vdash x
\end{align*}
is a Leibniz algebra. Further, if  $d$ is a derivation for the associative dialgebra, then $d$ is also a derivation for the induced Leibniz algebra structure on $D$.

$\bullet$ Let $(L, [~, \ldots, ~])$ be a $n$-Leibniz algebra, i.e, $[~, \ldots, ~] : L^{\times n} \rightarrow L$ is a multilinear map satisfying
\begin{align*}
[[x_1, \ldots, x_n], y_1, \ldots, y_{n-1}] = \sum_{i=1}^n [ x_1, \ldots, x_{i-1}, [ x_i, y_1, \ldots, y_{n-1}], x_{i+1}, \ldots, x_n],
\end{align*}
for $x_1, \ldots, x_n, y_1, \ldots, y_{n-1} \in L$ \cite{casas-loday-pira}. A derivation on the $n$-Leibniz algebra $L$ is given by a linear map $d : L \rightarrow L$ that satisfies
$d [x_1, \ldots, x_n] = \sum_{i=1}^n [x_1, \ldots, dx_i, \ldots, x_n].$
Note that a $n$-Leibniz algebra $(L, [~, \ldots, ~])$ induces a Leibniz algebra structure on $L^{\otimes n-1}$ with bracket
\begin{align*}
[ x_1 \otimes \cdots \otimes x_{n-1}, y_1 \otimes \cdots \otimes y_{n-1}] = \sum_{i=1}^{n-1} x_1 \otimes \cdots \otimes [x_i, y_1, \ldots, y_{n-1}] \otimes \cdots \otimes x_{n-1}.
\end{align*}
If $d : L \rightarrow L$ is a derivation for the $n$-Leibniz algebra $L$, then $d$ induces a map $\overline{d} : L^{\otimes n-1} \rightarrow L^{\otimes n-1}$ by
\begin{align*}
\overline{d} ( x_1 \otimes \cdots \otimes x_{n-1}) = \sum_{i=1}^{n-1} x_1 \otimes \cdots \otimes dx_i \otimes \cdots \otimes x_{n-1}.
\end{align*}
It can be checked that $\overline{d}$ is a derivation for the Leibniz bracket on $L^{\otimes n-1}$. In other words, $(L^{\otimes n-1}, \overline{d})$ is a LeibDer pair.

\begin{remark} Any Leibniz algebra associates a Lie algebra via skew-symmetrization. Let $(\mathfrak{g}, [~, ~])$ be a Leibniz algebra. Then the associated Lie algebra $\mathfrak{g}_{\mathrm{Lie}}$ is the quotient of $\mathfrak{g}$ by the subspace $S$ generated by elements of the form $[x, x]$, for $x \in \mathfrak{g}$. The Lie bracket on $\mathfrak{g}_{\mathrm{Lie}}$ is the one induced from the Leibniz bracket on $\mathfrak{g}$. If $(\mathfrak{g}, \phi_\mathfrak{g})$ is a LeibDer pair, then we have
\begin{align*}
\phi_\mathfrak{g} [x, x] &= [ \phi_\mathfrak{g} (x), x] + [ x, \phi_\mathfrak{g}(x)] \\
~&= [ x + \phi_\mathfrak{g} (x) , x+ \phi_\mathfrak{g} (x)] - [ x, x] - [ \phi_\mathfrak{g} (x), \phi_\mathfrak{g} (x) ] \in S.
\end{align*}
Therefore, $\phi_\mathfrak{g}$ induces a map $\overline{\phi_\mathfrak{g}} : \mathfrak{g}_{\mathrm{Lie}} \rightarrow \mathfrak{g}_{\mathrm{Lie}}$. Since $\phi_\mathfrak{g}$ is a derivation for the Leibniz bracket, $\overline{ \phi_\mathfrak{g}}$ is a Lie algebra derivation. In other words, $(\mathfrak{g}_{\mathrm{Lie}}, \overline{\phi_\mathfrak{g}} )$ is a LieDer pair.
\end{remark}

\begin{defn}
Let $(\mathfrak{g}, \phi_\mathfrak{g})$ and $(\mathfrak{h}, \phi_\mathfrak{h})$ be two LeibDer pairs. A LeibDer pair morphism between them is a Leibniz algebra morphism $f : \mathfrak{g} \rightarrow \mathfrak{h}$ satisfying $\phi_\mathfrak{h} \circ f = f \circ \phi_\mathfrak{g}$. It is called LeibDer pair isomorphism if $f$ is an isomorphism. 
\end{defn}
LeibDer pairs and morphisms between them forms a category, denoted by {\bf LeibDer}.

Let $V$ be a vector space. Consider the tensor module $\overline{T}(V) = V \oplus V^{\otimes 2} \oplus V^{\otimes 3} \oplus \cdots $ with the bracket inductively defined by
\begin{align*}
&[x,v ] = x \otimes v,\\
&[x, y \otimes v] = [x, y] \otimes v - [ x \otimes v, y], ~~~ \text{ for } x , y \in \overline{T}(V), v \in V.
\end{align*}
Then $\overline{T}(V)$ with the above bracket is a free Leibniz algebra over $V$ \cite{loday-pira}. Any linear map $d : V \rightarrow V$ induces a linear map $\overline{d} : \overline{T}(V)  \rightarrow \overline{T}(V) $ by
\begin{align}\label{tensor-deri}
\overline{d} (v_1 \otimes \cdots \otimes v_n ) = \sum_{i=1}^n v_1 \otimes \cdots \otimes dv_i \otimes \cdots \otimes v_n.
\end{align}
Then $\overline{d}$ is a derivation for the Leibniz bracket on $\overline{T}(V)$. To check that, we first observe that
\begin{align*}
\overline{d} [ v_1 \otimes \cdots \otimes v_n, v ] =~& \overline{d} (v_1 \otimes \cdots \otimes v_n \otimes v ) \\
=~& \sum_{i=1}^n  v_1 \otimes \cdots \otimes dv_i \otimes \cdots \otimes v_n \otimes v ~+~ v_1 \otimes \cdots \otimes v_n \otimes dv\\
=~& [ \overline{d} (v_1 \otimes \cdots \otimes v_n), v] + [ v_1 \otimes \cdots \otimes v_n, \overline{d} (v)].
\end{align*}
Next suppose that $\overline{d} [x, v_1 \otimes \cdots \otimes v_k] = [ \overline{d} (x), v_1 \otimes \cdots \otimes v_k] + [ x, \overline{d} (v_1 \otimes \cdots \otimes v_k)]$, for $x \in \overline{T}(V)$, $v_1, \ldots, v_k \in V$ and $ k \geq 1$. Then we have
\begin{align*}
&\overline{d} [x, v_1 \otimes \cdots \otimes v_{k+1}] \\
&= \overline{d} ([x, v_1 \otimes \cdots \otimes v_k] \otimes v_{k+1} - [ x \otimes v_{k+1}, v_1 \otimes \cdots \otimes v_k]) \\
& = \overline{d} [x, v_1 \otimes \cdots \otimes v_k] \otimes v_{k+1} + [x, v_1 \otimes \cdots \otimes v_k] \otimes d v_{k+1} - [ \overline{d} (x \otimes v_{k+1}),  v_1 \otimes \cdots \otimes v_k] \\
& \quad - [ x \otimes v_{k+1}, \overline{d} (v_1 \otimes \cdots \otimes v_k)]\\
 &= [ \overline{d} (x), v_1 \otimes \cdots \otimes v_k] \otimes v_{k+1} + [ x, \overline{d} (v_1 \otimes \cdots \otimes v_k)] \otimes v_{k+1} + [ x, v_1 \otimes \cdots \otimes v_k] \otimes dv_{k+1}\\
& \quad - [ \overline{d} (x) \otimes v_{k+1}, v_1 \otimes \cdots \otimes v_k] - [ x \otimes dv_{k+1} , v_1 \otimes \cdots \otimes v_k] - [ x \otimes v_{k+1}, \overline{d} (v_1 \otimes \cdots \otimes v_k)]\\
& = [ \overline{d} (x), v_1 \otimes \cdots \otimes v_{k+1}]  + [x,  \overline{d} (v_1 \otimes \cdots \otimes v_k) \otimes v_{k+1}] + [ x, (v_1 \otimes \cdots \otimes v_k)\otimes dv_{k+1}] \\
& = [ \overline{d} (x), v_1 \otimes \cdots \otimes v_{k+1}] + [x, \overline{d}(v_1 \otimes \cdots \otimes v_{k+1})]. 
\end{align*}
Hence $(\overline{T}(V), \overline{d})$ is a LeibDer pair. In the next, we show that this LeibDer pair is free in the following sense.

Let $(V, d)$ be a pair of vector space $V$ and a linear map $d: V \rightarrow V$. The free LeibDer pair over $(V, d)$ is a LeibDer pair $(\mathcal{L}(V), \phi_{\mathcal{L}(V)})$ equipped with a linear map $i : V \rightarrow \mathcal{L}(V)$ satisfying $\phi_{\mathcal{L}(V)} \circ i = i \circ d$ and the following universal condition holds: for any LeibDer pair $(\mathfrak{g}, \phi_\mathfrak{g})$ and a linear map $f : V \rightarrow \mathfrak{g}$ satisfying $\phi_\mathfrak{g} \circ f = f \circ d$, there exists an unique LeibDer pair morphism $\widetilde{f} : (\mathcal{L}(V), \phi_{\mathcal{L}(V)}) \rightarrow (\mathfrak{g}, \phi_\mathfrak{g})$ such that $\widetilde{f} \circ i = f$.

\begin{prop}
The LeibDer pair $(\overline{T}(V), \overline{d})$ is free over $(V, d)$.
\end{prop}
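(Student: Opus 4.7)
The plan is to bootstrap the existing universal property of $\overline{T}(V)$ as a free Leibniz algebra (recorded earlier in the excerpt, following Loday--Pirashvili) to the LeibDer setting. Take $i : V \hookrightarrow \overline{T}(V)$ to be the canonical inclusion in tensor degree one. Then $\overline{d} \circ i = i \circ d$ is immediate from the defining formula \eqref{tensor-deri}. So the only real content is constructing, for each pair $(\mathfrak{g}, \phi_\mathfrak{g})$ and each $f : V \to \mathfrak{g}$ with $\phi_\mathfrak{g} \circ f = f \circ d$, a unique LeibDer pair morphism $\widetilde{f} : \overline{T}(V) \to \mathfrak{g}$ extending $f$.

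For existence, I would first invoke the universal property of the free Leibniz algebra: there is a unique Leibniz algebra morphism $\widetilde{f} : \overline{T}(V) \to \mathfrak{g}$ with $\widetilde{f} \circ i = f$. All that remains is to check the derivation-compatibility $\phi_\mathfrak{g} \circ \widetilde{f} = \widetilde{f} \circ \overline{d}$. Rather than induct on tensor length with the explicit formula for $\overline{d}$ (which is workable but painful), I would consider the subset
\[
X = \{\, x \in \overline{T}(V) \mid \phi_\mathfrak{g}(\widetilde{f}(x)) = \widetilde{f}(\overline{d}(x)) \,\}.
\]
The hypothesis $\phi_\mathfrak{g} \circ f = f \circ d$ gives $V \subseteq X$. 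The key observation is that $X$ is a Leibniz subalgebra of $\overline{T}(V)$: if $x, y \in X$, then using in turn that $\widetilde{f}$ is a Leibniz morphism, that $\phi_\mathfrak{g}$ is a derivation on $\mathfrak{g}$, the assumption $x, y \in X$, and that $\overline{d}$ is a derivation on $\overline{T}(V)$ (established just before the proposition), one computes
\[
\phi_\mathfrak{g}(\widetilde{f}[x,y]) = [\phi_\mathfrak{g}\widetilde{f}(x), \widetilde{f}(y)] + [\widetilde{f}(x), \phi_\mathfrak{g}\widetilde{f}(y)] = \widetilde{f}([\overline{d}(x), y] + [x, \overline{d}(y)]) = \widetilde{f}(\overline{d}[x,y]).
\]
Since $\overline{T}(V)$ is generated as a Leibniz algebra by $V$, this forces $X = \overline{T}(V)$, which is the desired compatibility.

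Uniqueness is essentially free: any LeibDer pair morphism $\widetilde{f}$ satisfying $\widetilde{f} \circ i = f$ is in particular a Leibniz algebra morphism extending $f$, so the uniqueness clause of the free Leibniz algebra property pins it down. I do not anticipate a genuine obstacle; the main point worth being careful about is invoking the fact that $\overline{T}(V)$ is generated by $V$ as a Leibniz algebra (not merely spanned by tensor monomials), so that closure of $X$ under the bracket really suffices to conclude $X = \overline{T}(V)$. This is clear from the inductive definition of the bracket, which writes every tensor monomial iteratively in terms of elements of $V$ via $[x,v] = x \otimes v$.
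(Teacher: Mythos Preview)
Your argument is correct. Note, however, that the paper states this proposition without proof, so there is nothing to compare against directly. Your approach is the natural one: invoke the universal property of $\overline{T}(V)$ as a free Leibniz algebra to produce $\widetilde{f}$, then verify the derivation compatibility $\phi_\mathfrak{g} \circ \widetilde{f} = \widetilde{f} \circ \overline{d}$ separately. The subalgebra trick via the set $X$ is clean and avoids the explicit induction on tensor length that the paper used just above to check that $\overline{d}$ is a derivation; either route works, but yours is shorter. One small point you leave implicit but should perhaps state: $X$ is automatically a linear subspace (both $\phi_\mathfrak{g} \circ \widetilde{f}$ and $\widetilde{f} \circ \overline{d}$ are linear), so closure under the bracket is the only thing to check before invoking generation by $V$.
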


\begin{defn}
Let $(\mathfrak{g}, \phi_\mathfrak{g})$ be a LeibDer pair. A representation of it is given by a pair $(M, \phi_M)$ in which $M$ is a representation of $\mathfrak{g}$ (see Definition \ref{defn-leib-repn}) and $\phi_M : M \rightarrow M$ is a linear map satisfying
\begin{align}
&\phi_M [x,m] = [ \phi_\mathfrak{g} (x), m] + [ x, \phi_M (m)], \label{rep-id-1}\\
&\phi_M [m,x] = [ \phi_M (m), x] + [ m, \phi_\mathfrak{g} (x)]. \label{rep-id-2}
\end{align}
\end{defn}

It is known that a representation of a Leibniz algebra gives rise to a semi-direct product \cite{loday-pira}. We extend this in the context of LeibDer pairs.

\begin{prop}
Let $(\mathfrak{g}, \phi_\mathfrak{g})$ be a LeibDer pair and $(M, \phi_M)$ be a representation of it. Then $( \mathfrak{g} \oplus M, \phi_\mathfrak{g} \oplus \phi_M)$ is a LeibDer pair where the Leibniz algebra bracket on $\mathfrak{g} \oplus M$ is given by the semi-direct product
\begin{align*}
[(x,m), (y,n)] = ([x,y], [x,n]+ [m, y]).
\end{align*}
\end{prop}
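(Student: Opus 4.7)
The plan is to verify two things: (i) that the semi-direct product bracket on $\mathfrak{g} \oplus M$ really is a Leibniz bracket, and (ii) that $\phi_\mathfrak{g} \oplus \phi_M$ is a derivation for this bracket. Part (i) is the classical construction of Loday--Pirashvili for Leibniz algebras, and follows directly from the three representation axioms (MLL), (LML), (LLM) in Definition \ref{defn-leib-repn}; I would either cite this or, if desired, include a short verification by expanding $[[(x,m),(y,n)],(z,p)]$ and comparing it with $[[(x,m),(z,p)],(y,n)] + [(x,m),[(y,n),(z,p)]]$ component-wise. The $\mathfrak{g}$-component gives back the Leibniz identity on $\mathfrak{g}$, while the $M$-component splits into three pieces, one for each summand $[x,n]$, $[m,y]$ (together with analogous terms from the other brackets), matching exactly the three axioms (MLL), (LML), (LLM).

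For (ii), I would compute both sides of the derivation identity on a typical pair $(x,m),(y,n) \in \mathfrak{g} \oplus M$. The left-hand side is
\begin{align*}
(\phi_\mathfrak{g} \oplus \phi_M)[(x,m),(y,n)] = \bigl(\phi_\mathfrak{g}[x,y],\; \phi_M[x,n] + \phi_M[m,y]\bigr),
\end{align*}
and the right-hand side is
\begin{align*}
&[(\phi_\mathfrak{g}(x),\phi_M(m)),(y,n)] + [(x,m),(\phi_\mathfrak{g}(y),\phi_M(n))] \\
&\quad = \bigl([\phi_\mathfrak{g}(x),y]+[x,\phi_\mathfrak{g}(y)],\; [\phi_\mathfrak{g}(x),n]+[\phi_M(m),y]+[x,\phi_M(n)]+[m,\phi_\mathfrak{g}(y)]\bigr).
\end{align*}
Equality in the $\mathfrak{g}$-component is exactly the derivation condition for $\phi_\mathfrak{g}$, and equality in the $M$-component follows by applying (\ref{rep-id-1}) to $\phi_M[x,n]$ and (\ref{rep-id-2}) to $\phi_M[m,y]$ and adding.

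There is no real obstacle; the statement is a direct bookkeeping exercise once the representation axioms (\ref{rep-id-1})–(\ref{rep-id-2}) are in place. The only point worth stressing is that the two compatibility identities defining a representation of a LeibDer pair are precisely what is needed to handle the two summands $[x,n]$ and $[m,y]$ of the off-diagonal part of the semi-direct product bracket, so the construction extends from the Leibniz case to the LeibDer case without any additional hypotheses.
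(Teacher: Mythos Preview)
Your proposal is correct and follows essentially the same approach as the paper: the paper also takes the Leibniz-algebra structure of the semi-direct product as known (citing Loday--Pirashvili) and only verifies that $\phi_\mathfrak{g}\oplus\phi_M$ is a derivation, via the same component-wise computation you describe. Your write-up is slightly more detailed on part (i), but the argument is the same.
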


\begin{proof}
We only need to show that $\phi_\mathfrak{g} \oplus \phi_M :  \mathfrak{g} \oplus M \rightarrow  \mathfrak{g} \oplus M$ is a derivation for the Leibniz algebra  $\mathfrak{g} \oplus M$. We have 
\begin{align*}
(\phi_\mathfrak{g} \oplus \phi_M )  [(x,m), (y,n)] =~& (\phi_\mathfrak{g} [x, y],  \phi_M [x, n] + \phi_M [m, y] ) \\
=~& ([\phi_\mathfrak{g} (x), y] , [ \phi_\mathfrak{g} (x), n] + [ \phi_M (m), y] ) + ([x, \phi_\mathfrak{g} (y)] , [x, \phi_M (n)] + [m, \phi_\mathfrak{g} (x)])\\
=~& [(\phi_\mathfrak{g} \oplus \phi_M) (x, m), (y, n) ] + [ (x, m) ,  (\phi_\mathfrak{g} \oplus \phi_M) (y, n)]. 
\end{align*}
Hence the proof.
\end{proof}

\subsection{Universal enveloping AssDer pair}
In \cite{loday-pira} the authors construct the universal enveloping algebra $UL(\mathfrak{g})$ of a Leibniz algebra $\mathfrak{g}$. When the Leibniz algebra $\mathfrak{g}$ is equipped with a derivation, it induces a derivation on $UL(\mathfrak{g})$. We first recall the construction of $UL(\mathfrak{g})$.

Let $\mathfrak{g}^l$ and $\mathfrak{g}^r$ be two copies of the Leibniz algebra $\mathfrak{g}$. For any $x \in \mathfrak{g}$, we denote the corresponding element in $\mathfrak{g}^l$ and $\mathfrak{g}^r$ by $l_x$ and $r_x$, respectively. Then the universal enveloping algebra $UL(\mathfrak{g})$ is the quotient of the tensor algebra $T(\mathfrak{g}^l \oplus \mathfrak{g}^r)$ by the two sided ideal $I$ generated by elements of the form 
\begin{align*}
r_x \otimes r_y - r_y \otimes r_x - r_{[x,y]}, \qquad l_x \otimes r_y - r_y \otimes l_x - l_{[x,y]} \quad \text{ and } \quad (r_y + l_y) \otimes l_x, ~~~ \text{ for } x, y \in \mathfrak{g}. 
\end{align*}

Let $(\mathfrak{g}, \phi_\mathfrak{g})$ be a LeibDer pair. Then $\phi_\mathfrak{g}$ induces a linear map (denoted by the same notation) $\phi_\mathfrak{g} : \mathfrak{g}^l \oplus \mathfrak{g}^r \rightarrow \mathfrak{g}^l \oplus \mathfrak{g}^r$ by $\phi_\mathfrak{g} (l_x, r_y) = (l_{\phi_\mathfrak{g} (x)}, r_{\phi_\mathfrak{g} (y)}).$ This linear map on $\mathfrak{g}^l \oplus \mathfrak{g}^r$ induces a derivation $\overline{\mathfrak{g}}$ on the tensor algebra $T(\mathfrak{g}^l \oplus \mathfrak{g}^r)$. See Equation (\ref{tensor-deri}). Then $\overline{\phi_\mathfrak{g}} (I) \subset I$ as
\begin{align*}
&\overline{\phi_\mathfrak{g}}  (r_x \otimes r_y - r_y \otimes r_x - r_{[x,y]} ) \\
&= r_{\phi_\mathfrak{g}(x)} \otimes r_y + r_x \otimes r_{\phi_\mathfrak{g} (y)} - r_{\phi_\mathfrak{g} (y)} \otimes r_x - r_y \otimes  r_{\phi_\mathfrak{g}(x)} - r_{\phi_\mathfrak{g} [x,y]} \\
&= r_{\phi_\mathfrak{g}(x)} \otimes r_y - r_y \otimes  r_{\phi_\mathfrak{g}(x)} - r_{[\phi_\mathfrak{g} (x), y]} + r_x \otimes r_{\phi_\mathfrak{g} (y)} - r_{\phi_\mathfrak{g} (y)} \otimes r_x - r_{[x, \phi_\mathfrak{g} (y)]} \in I,
\end{align*}
\begin{align*}
&\overline{\phi_\mathfrak{g}}  (l_x \otimes r_y - r_y \otimes l_x - l_{[x,y]} )\\
& = l_{\phi_\mathfrak{g}(x)} \otimes r_y - r_y \otimes  l_{\phi_\mathfrak{g}(x)} - l_{[\phi_\mathfrak{g} (x), y]} + l_x \otimes r_{\phi_\mathfrak{g} (y)} - r_{\phi_\mathfrak{g} (y)} \otimes l_x - l_{[x, \phi_\mathfrak{g} (y)]} \in I
\end{align*}
and
\begin{align*}
\overline{\phi_\mathfrak{g}} ( (r_y + l_y) \otimes l_x ) = (r_{\phi_\mathfrak{g}(y)} + l_{\phi_\mathfrak{g} (y)} ) \otimes l_x + (r_y + l_y) \otimes l_{\phi_\mathfrak{g} (x)} \in I.
\end{align*}
Hence $\overline{\phi_\mathfrak{g}} $ induces a derivation on the universal enveloping associative algebra $UL(\mathfrak{g}) = T (\mathfrak{g}^l \oplus \mathfrak{g}^r) / I$. In other words, $(UL(\mathfrak{g}), \overline{\phi_\mathfrak{g}})$ is an AssDer pair.

In \cite{loday-pira} the authors showed that representations of a Leibniz algebra $\mathfrak{g}$ is equivalent to right modules over $UL(\mathfrak{g})$. More precisely, let $M$ be a representation of a Leibniz algebra $\mathfrak{g}$. Then the right action of $UL (\mathfrak{g})$ on $M$ is given by
\begin{align*}
m \cdot l_x = [x, m] \qquad \text{ and } \qquad m \cdot r_x = [m, x].
\end{align*}
We extend this situation endowed with derivations. We first recall the following definition from \cite{das-mandal}.

Let $(A, \phi_A)$ be an AssDer pair. A right module over it consists of a pair $(M, \phi_M)$ in which $M$ is a right $A$-module and $\phi_M: M \rightarrow M$ is a linear map satisfying
\begin{align*}
\phi_M (m \cdot a) = \phi_M (m) \cdot a + m \cdot \phi_A (a), ~~~ \text{ for } a \in A, m \in M.
\end{align*}

\begin{prop}
The category of representations of a LeibDer pair $(\mathfrak{g}, \phi_\mathfrak{g})$ is equivalent to the category of right modules over the AssDer pair $(UL(\mathfrak{g}), \overline{\phi_\mathfrak{g}}).$
\end{prop}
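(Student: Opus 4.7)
The plan is to upgrade Loday--Pirashvili's equivalence between representations of a Leibniz algebra $\mathfrak{g}$ and right modules over $UL(\mathfrak{g})$ to the setting with derivations. On underlying vector spaces the functors are the identity, so the content of the proof lies in showing that the compatibility axioms for a representation of $(\mathfrak{g},\phi_\mathfrak{g})$ correspond, under the same linear map $\phi_M$, to the Leibniz-rule axiom for a right module over the AssDer pair $(UL(\mathfrak{g}),\overline{\phi_\mathfrak{g}})$.

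First I would construct the functor from representations of $(\mathfrak{g},\phi_\mathfrak{g})$ to right $(UL(\mathfrak{g}),\overline{\phi_\mathfrak{g}})$-modules. Starting from a representation $(M,\phi_M)$, the underlying $UL(\mathfrak{g})$-module structure is already the one from \cite{loday-pira}, given by $m \cdot l_x = [x,m]$ and $m \cdot r_x = [m,x]$. I take $\phi_M$ itself as the module-level derivation and verify the identity
\begin{align*}
\phi_M(m \cdot a) = \phi_M(m) \cdot a + m \cdot \overline{\phi_\mathfrak{g}}(a)
\end{align*}
on the generators $a = l_x$ and $a = r_x$: for $l_x$ it reduces to (\ref{rep-id-1}), while for $r_x$ it reduces to (\ref{rep-id-2}). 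Then a short induction extends this to all of $UL(\mathfrak{g})$: assuming the identity for $a$ and $b$, writing $\phi_M(m \cdot (ab)) = \phi_M((m \cdot a) \cdot b)$ and applying the inductive hypothesis twice together with the fact that $\overline{\phi_\mathfrak{g}}$ is a derivation of the associative product gives the identity for $ab$.

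For the reverse direction, starting from a right module $(M,\phi_M)$ over $(UL(\mathfrak{g}),\overline{\phi_\mathfrak{g}})$, the induced Leibniz actions $[x,m] := m \cdot l_x$ and $[m,x] := m \cdot r_x$ already make $M$ a representation of $\mathfrak{g}$ by \cite{loday-pira}. Specializing the Leibniz-rule identity for the module derivation to $a = l_x$ and $a = r_x$, and using $\overline{\phi_\mathfrak{g}}(l_x) = l_{\phi_\mathfrak{g}(x)}$, $\overline{\phi_\mathfrak{g}}(r_x) = r_{\phi_\mathfrak{g}(x)}$, recovers exactly (\ref{rep-id-1}) and (\ref{rep-id-2}). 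The two assignments are visibly mutually inverse on objects, and a morphism of representations $f : (M,\phi_M) \to (N,\phi_N)$ satisfies $\phi_N \circ f = f \circ \phi_M$ together with equivariance for the left and right Leibniz actions, which is equivalent to the same $f$ being a morphism of right modules over the AssDer pair. This yields the desired equivalence of categories.

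No step looks genuinely hard: the verification on generators is a direct rewriting using (\ref{rep-id-1}) and (\ref{rep-id-2}), and the main mild obstacle is keeping the inductive Leibniz-rule argument clean over a non-free quotient. Since the defining relations of $I$ are preserved by $\overline{\phi_\mathfrak{g}}$ (as already used when constructing the universal enveloping AssDer pair), the induction descends to $UL(\mathfrak{g})$ without issue, so the bookkeeping is the only thing to handle carefully.
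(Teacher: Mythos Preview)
Your proposal is correct and follows essentially the same approach as the paper: both rely on the Loday--Pirashvili equivalence for the underlying module structure and then observe that the derivation compatibility conditions (\ref{rep-id-1}) and (\ref{rep-id-2}) are exactly the Leibniz-rule identity for $\phi_M$ evaluated at the generators $l_x$ and $r_x$. Your version is in fact more complete than the paper's, which simply states the equivalence on generators and stops, whereas you also sketch the inductive extension to arbitrary elements of $UL(\mathfrak{g})$ and the correspondence on morphisms.
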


\begin{proof}
Let $(M, \phi_M)$ be a representation of the LeibDer pair $(\mathfrak{g}, \phi_\mathfrak{g})$. Then $M$ is already a right $UL(\mathfrak{g})$-module. Moreover, the condition (\ref{rep-id-1}) is equivalent to $\phi_M (m \cdot l_x) = \phi_M (m) \cdot l_x + m \cdot \overline{\phi_\mathfrak{g}} (l_x)$. Similarly, the condition (\ref{rep-id-2}) is equivalent to $\phi_M (m \cdot r_x) = \phi_M (m) \cdot r_x + m \cdot \overline{\phi_\mathfrak{g}} (r_x)$. Hence the proof.
\end{proof}

\subsection{Cohomology}

Here we define cohomology of a LeibDer pair with coefficients in a representation. Let $(\mathfrak{g}, \phi_\mathfrak{g})$ be a LeibDer pair and $(M, \phi_M)$ be a representation of it. We define the cochain groups by $C^0_{\mathrm{LeibDer}} (\mathfrak{g}, M) := 0$, $C^1_{\mathrm{LeibDer}} (\mathfrak{g}, M) := \mathrm{Hom} (\mathfrak{g}, M)$ and
\begin{align*}
C^n_{\mathrm{LeibDer}} (\mathfrak{g}, M) := \mathrm{Hom} (\mathfrak{g}^{\otimes n} , M ) \times  \mathrm{Hom} (\mathfrak{g}^{\otimes n -1 } , M ) , ~~~ \text{ for } n \geq 2.
\end{align*}
Before we define the coboundary operator, we define a new map $\delta : \mathrm{Hom} (\mathfrak{g}^{\otimes n} , M ) \rightarrow \mathrm{Hom} (\mathfrak{g}^{\otimes n} , M )$ by
\begin{align*}
\delta f = \sum_{i=1}^n f \circ (\mathrm{id} \otimes \cdots \otimes \phi_\mathfrak{g} \otimes \cdots \otimes \mathrm{id}) - \phi_M \circ f.
\end{align*}
Then we have the following.
\begin{lemma}
The map $\delta$ commute with $\delta_L$, i.e, $\delta_L \circ \delta = \delta \circ \delta_L$.
\end{lemma}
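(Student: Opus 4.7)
The plan is to compute both $(\delta_L\circ\delta)f$ and $(\delta\circ\delta_L)f$ at an arbitrary tuple $(x_1,\ldots,x_{n+1})$ and verify equality by bookkeeping term-by-term. Writing $f_k = f\circ(\mathrm{id}\otimes\cdots\otimes \phi_\mathfrak{g}\otimes\cdots\otimes \mathrm{id})$ for the insertion of $\phi_\mathfrak{g}$ at the $k$-th slot, so that $\delta f = \sum_k f_k - \phi_M\circ f$, I would first expand
\[
(\delta_L\delta f)(x_1,\ldots,x_{n+1}) \;=\; \sum_{k=1}^{n}(\delta_L f_k)(x_1,\ldots,x_{n+1}) \;-\; (\delta_L (\phi_M\circ f))(x_1,\ldots,x_{n+1}),
\]
and
\[
(\delta\,\delta_L f)(x_1,\ldots,x_{n+1}) \;=\; \sum_{k=1}^{n+1}(\delta_L f)(x_1,\ldots,\phi_\mathfrak{g}(x_k),\ldots,x_{n+1}) \;-\; \phi_M\bigl((\delta_L f)(x_1,\ldots,x_{n+1})\bigr).
\]

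Next I would sort the terms on each side into three groups: (i) those in which $\phi_\mathfrak{g}$ lands on some $x_k$ that appears as an argument of $f$ (call these ``internal''); (ii) those in which $\phi_\mathfrak{g}$ lands on an $x_k$ appearing as an outer bracket entry $[x_1,-]$ or $[-,x_i]$; and (iii) those involving the outer $\phi_M$ or a bracket $[x_i,x_j]$ with $\phi_\mathfrak{g}$ hitting $x_i$ or $x_j$. The ``internal'' terms from the two sides coincide on the nose: on the left, each $\delta_L f_k$ contributes a copy of the Leibniz differential with $\phi_\mathfrak{g}$ already inserted into $f$; on the right, the $k$-th summand of $\sum_k(\delta_L f)(\ldots,\phi_\mathfrak{g}(x_k),\ldots)$ produces exactly the same contributions whenever $\phi_\mathfrak{g}(x_k)$ ends up as an argument of $f$.

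The crux is then group (ii) and group (iii), which must cancel against the outer $\phi_M$ terms. For group (ii), the representation identities \eqref{rep-id-1} and \eqref{rep-id-2} rewrite
\[
\phi_M[x_1,f(\cdots)] = [\phi_\mathfrak{g}(x_1),f(\cdots)] + [x_1,\phi_M f(\cdots)], \qquad \phi_M[f(\cdots),x_i] = [\phi_M f(\cdots),x_i] + [f(\cdots),\phi_\mathfrak{g}(x_i)],
\]
which is precisely what is needed to match the ``external'' occurrences of $\phi_\mathfrak{g}(x_k)$ in $\delta\,\delta_L f$ with the $\phi_M$-on-the-bracket terms arising from $\delta_L(\phi_M\circ f)$. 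For group (iii), when $\phi_\mathfrak{g}$ lands on $x_i$ or $x_j$ inside a composite bracket $[x_i,x_j]$, the derivation identity $\phi_\mathfrak{g}[x_i,x_j]=[\phi_\mathfrak{g}(x_i),x_j]+[x_i,\phi_\mathfrak{g}(x_j)]$ reassembles the two separate contributions on the $\delta\,\delta_L f$ side into the single $f(\ldots,\phi_\mathfrak{g}[x_i,x_j],\ldots)$ term that appears (inside $\phi_M f$, transported via the definition of $\delta_L$) on the $\delta_L\delta f$ side.

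I expect the main obstacle to be purely combinatorial bookkeeping: keeping track of the signs $(-1)^i$ and $(-1)^{j+1}$ and the slot indices when $\phi_\mathfrak{g}$ is inserted at the $k$-th coordinate and one of $x_i,x_j,x_k$ is subsequently omitted. Once the sums are indexed carefully (splitting each $k$-sum according to whether $k<i$, $k=i$, $i<k<j$, $k=j$, or $k>j$), every term in $\delta_L\delta f - \delta\,\delta_L f$ is accounted for by one of the three matchings above, and the identity follows.
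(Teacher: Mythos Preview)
The paper states this lemma without proof, so there is no ``paper's own proof'' to compare against. Your plan is the natural direct verification and is sound: expand both sides using the explicit formula for $\delta_L$, and match terms using only the derivation identity $\phi_\mathfrak{g}[x_i,x_j]=[\phi_\mathfrak{g}(x_i),x_j]+[x_i,\phi_\mathfrak{g}(x_j)]$ and the representation compatibilities (\ref{rep-id-1})--(\ref{rep-id-2}). Your grouping into (i) internal arguments of $f$, (ii) outer bracket slots, and (iii) the $[x_i,x_j]$ slots is exactly the right bookkeeping scheme.

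One small imprecision: in your description of group (iii) you say the term $f(\ldots,\phi_\mathfrak{g}[x_i,x_j],\ldots)$ appears ``inside $\phi_M f$, transported via the definition of $\delta_L$'' on the $\delta_L\delta f$ side. In fact that term arises on the left from $\sum_k \delta_L f_k$, specifically from the summand where the insertion index $k$ (in the $n$-tuple $(x_1,\ldots,\widehat{x_j},\ldots,x_{n+1})$) coincides with the position occupied by $[x_i,x_j]$; it is this term that matches the sum of the $k=i$ and $k=j$ contributions on the $\delta\,\delta_L f$ side via the derivation identity. The $\phi_M$-terms of type (iii), namely $-(\phi_M\circ f)(\ldots,[x_i,x_j],\ldots)$ coming from $-\delta_L(\phi_M\circ f)$ on the left and $-\phi_M\big(f(\ldots,[x_i,x_j],\ldots)\big)$ from $-\phi_M(\delta_L f)$ on the right, simply coincide on the nose. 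With that correction your matching scheme accounts for every term, and the combinatorial sign-tracking you anticipate is routine.
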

Finally, we define the coboundary operator $\partial : C^n_{\mathrm{LeibDer}} (\mathfrak{g}, M) \rightarrow C^{n+1}_{\mathrm{LeibDer}} (\mathfrak{g}, M)$ as
\begin{align*}
 \begin{cases}
 \partial f = (\delta_L f , - \delta f ), ~~~ \text{ for } f \in C^1_{\mathrm{LeibDer}} (\mathfrak{g}, M), \\
 \partial ( f_n, \overline{f}_n) = (\delta_L f_n, \delta_L \overline{f}_n + (-1)^n \delta f_n), ~~~ \text{ for } ( f_n, \overline{f}_n) \in C^n_{\mathrm{LeibDer}} (\mathfrak{g}, M).
 \end{cases}
\end{align*}
\begin{prop}
The map $\partial$ satisfies $\partial^2 = 0$.
\end{prop}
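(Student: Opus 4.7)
The plan is to unwind $\partial^2$ on each type of cochain and use the two key facts that are already at our disposal: first, $\delta_L^2 = 0$, which holds because $\delta_L$ is the Leibniz cochain differential with coefficients in the representation $M$; and second, $\delta_L \circ \delta = \delta \circ \delta_L$, which is the content of the lemma just stated. Once these are in hand, the computation is a sign-tracking verification with no new combinatorial content.

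First I would treat the low-degree case. For $f \in C^1_{\mathrm{LeibDer}}(\mathfrak{g},M)$, by definition
\begin{align*}
\partial^2 f \;=\; \partial(\delta_L f,\, -\delta f) \;=\; \bigl(\delta_L^2 f,\;\; \delta_L(-\delta f) + (-1)^2 \delta(\delta_L f)\bigr).
\end{align*}
The first slot vanishes because $\delta_L^2 = 0$, and the second slot is $-\delta_L \delta f + \delta \delta_L f$, which vanishes by the lemma. Hence $\partial^2 f = 0$.

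Next I would handle the general case. For $(f_n,\overline{f}_n) \in C^n_{\mathrm{LeibDer}}(\mathfrak{g},M)$ with $n \geq 2$, applying the definition twice gives
\begin{align*}
\partial^2(f_n,\overline{f}_n) \;=\; \partial\bigl(\delta_L f_n,\;\delta_L \overline{f}_n + (-1)^n \delta f_n\bigr) \;=\; \bigl(\delta_L^2 f_n,\;\; A \bigr),
\end{align*}
where
\begin{align*}
A \;=\; \delta_L\bigl(\delta_L \overline{f}_n + (-1)^n \delta f_n\bigr) + (-1)^{n+1}\delta(\delta_L f_n) \;=\; \delta_L^2 \overline{f}_n + (-1)^n \delta_L \delta f_n + (-1)^{n+1} \delta \delta_L f_n.
\end{align*}
The first component is zero by $\delta_L^2 = 0$. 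In the expression for $A$, the term $\delta_L^2 \overline{f}_n$ vanishes for the same reason, and the remaining two terms combine to $(-1)^n\bigl(\delta_L \delta f_n - \delta \delta_L f_n\bigr)$, which is zero by the commutativity lemma. This shows $\partial^2 = 0$ on all of $C^{\ast}_{\mathrm{LeibDer}}(\mathfrak{g},M)$.

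The only substantive ingredient is the commutation $\delta_L \delta = \delta \delta_L$, and the signs in the definition of $\partial$ have been arranged precisely so that the cross-terms cancel; beyond that, the argument is purely formal, so I do not anticipate any real obstacle.
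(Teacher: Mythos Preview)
Your proof is correct and follows essentially the same approach as the paper: both split into the low-degree case $f\in C^1_{\mathrm{LeibDer}}(\mathfrak{g},M)$ and the general case $(f_n,\overline{f}_n)\in C^n_{\mathrm{LeibDer}}(\mathfrak{g},M)$, compute $\partial^2$ directly, and conclude using $\delta_L^2=0$ together with the commutation lemma $\delta_L\circ\delta=\delta\circ\delta_L$. Your write-up is simply a slightly more expanded version of the paper's computation.
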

\begin{proof}
For $f \in C^1_{\mathrm{LeibDer}} (\mathfrak{g}, M)$, we have
\begin{align*}
\partial^2 f = \partial ( \delta_{L} f, - \delta f) = ( \delta_{L}^2 f, ~ - \delta_{L} \delta f + \delta \delta_{L} f ) = 0.
\end{align*}
Similarly, for $(f_n, \overline{f}_n ) \in C^n_{\mathrm{LeibDer}} (\mathfrak{g}, M)$, we have
\begin{align*}
\partial^2 (f_n, \overline{f}_n ) =~& \partial ( \delta_{L} f_n, ~ \delta_{L} \overline{f}_n + (-1)^n \delta f_n ) \\
=~& ( \delta_{L}^2 f_n, ~ \delta_{L}^2 \overline{f}_n + (-1)^n \delta_{L} \delta f_n + (-1)^{n+1} \delta \delta_{L} f_n ) = 0.
 \end{align*}
 Hence the proof.
\end{proof}
Thus, it follows from the above proposition that $(C^\ast_{\mathrm{LeibDer}}(\mathfrak{g}, M), \partial )$ is a cochain complex. The corresponding cohomology groups are denoted by $H^n_{\mathrm{LeibDer}}(\mathfrak{g}, M), ~ n \geq 0$.

In the next, we show that the cohomology of a LeibDer pair $(\mathfrak{g} , \phi_\mathfrak{g})$ with coefficients in itself carries a degree $-1$ graded Lie bracket.

\begin{prop}
The bracket $\llbracket ~, ~ \rrbracket^{\thicksim} : C^m_{\mathrm{LeibDer}} (\mathfrak{g}, \mathfrak{g}) \times C^n_{\mathrm{LeibDer}} (\mathfrak{g}, \mathfrak{g}) \rightarrow C^{m+n-1}_{\mathrm{LeibDer}} (\mathfrak{g}, \mathfrak{g})$ given by
\begin{align*}
\llbracket (f, \overline{f}), (g, \overline{g}) \rrbracket^{\thicksim}  := ( \llbracket f, g \rrbracket,  (-1)^{m+1} \llbracket f, \overline{g} \rrbracket + \llbracket \overline{f}, g \rrbracket )
\end{align*}
defines a degree $-1$ graded Lie bracket on the graded space $C^\ast_{\mathrm{LeibDer}} (\mathfrak{g}, \mathrm{g})$.
\end{prop}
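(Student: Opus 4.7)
The plan is to verify the two defining axioms of a degree $-1$ graded Lie bracket on the graded space $C^{\ast}_{\mathrm{LeibDer}}(\mathfrak{g},\mathfrak{g})$, where $C^{n}_{\mathrm{LeibDer}}(\mathfrak{g},\mathfrak{g})$ sits in degree $n$: namely, graded skew-symmetry
\[
\llbracket (f,\overline{f}),(g,\overline{g})\rrbracket^{\thicksim}
=-(-1)^{(m-1)(n-1)}\,\llbracket (g,\overline{g}),(f,\overline{f})\rrbracket^{\thicksim},
\]
and the graded Jacobi identity
\[
(-1)^{(m-1)(p-1)}\llbracket \llbracket F,G\rrbracket^{\thicksim},H\rrbracket^{\thicksim}
+(\text{cyclic in }F,G,H)=0,
\]
where $F=(f,\overline{f})$, $G=(g,\overline{g})$, $H=(h,\overline{h})$ have degrees $m,n,p$ respectively. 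Since the definition of $\llbracket \cdot,\cdot\rrbracket^{\thicksim}$ is component-wise, both identities reduce to identities for the ordinary degree $-1$ bracket $\llbracket \cdot,\cdot\rrbracket$ on $C^{\ast}(\mathfrak{g},\mathfrak{g})$, which was already recalled in Section~2.

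For graded skew-symmetry the first component is immediate. For the second component I would expand $\llbracket (g,\overline{g}),(f,\overline{f})\rrbracket^{\thicksim}$ and rewrite each of the two resulting terms using skew-symmetry of $\llbracket \cdot,\cdot\rrbracket$; concretely, $\overline{g}$ has degree $n-1$ so $\llbracket g,\overline{f}\rrbracket=-(-1)^{(n-1)(m-2)}\llbracket \overline{f},g\rrbracket$, and similarly for $\llbracket \overline{g},f\rrbracket$. Comparing coefficients of $\llbracket \overline{f},g\rrbracket$ and $\llbracket f,\overline{g}\rrbracket$ then reduces to the parity identity $(-1)^{(n-1)(m-2)}=(-1)^{(m-1)(n-1)+m-1}$ (and the analogue with $m$ and $n$ swapped in the second term), which is routine.

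For the Jacobi identity the first component again follows directly from the corresponding identity on $C^{\ast}(\mathfrak{g},\mathfrak{g})$. The key structural observation for the second component is that, after expanding $\llbracket \llbracket F,G\rrbracket^{\thicksim},H\rrbracket^{\thicksim}$ and its two cyclic siblings, every resulting term is a double bracket $\llbracket \llbracket \bullet,\bullet\rrbracket,\bullet\rrbracket$ involving exactly one of $\overline{f},\overline{g},\overline{h}$ together with two unbarred inputs. I would therefore sort the $3\times 3=9$ terms into three groups, according to which of $\overline{f},\overline{g},\overline{h}$ appears. The group containing $\overline{h}$, for example, consists of
\[
(-1)^{(m-1)(p-1)+m+n}\llbracket \llbracket f,g\rrbracket,\overline{h}\rrbracket
+(-1)^{(n-1)(m-1)+n+1}\llbracket \llbracket g,\overline{h}\rrbracket,f\rrbracket
+(-1)^{(p-1)(n-1)}\llbracket \llbracket \overline{h},f\rrbracket,g\rrbracket,
\]
and I would verify that the three prefactors are proportional (with a common factor $(-1)^{n+1}$) to the signs appearing in the Jacobi identity for the triple $(f,g,\overline{h})$, which has degrees $(m,n,p-1)$; hence the group vanishes. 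The other two groups vanish by the analogous Jacobi identities for $(g,h,\overline{f})$ and $(h,f,\overline{g})$.

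The only genuine difficulty is sign bookkeeping: one must correctly track that the first component of $\llbracket F,G\rrbracket^{\thicksim}$ has degree $m+n-1$ (so the external sign in the next bracketing is $(-1)^{m+n-1+1}$) while its second component has degree $m+n-2$, and that each $\overline{x}$ has degree one less than $x$, so the Jacobi signs for triples like $(f,g,\overline{h})$ differ from those for $(f,g,h)$ by predictable shifts. Once these conventions are fixed, all three groups collapse by a single application of the graded Jacobi identity for $\llbracket \cdot,\cdot\rrbracket$ on $C^{\ast}(\mathfrak{g},\mathfrak{g})$, completing the proof.
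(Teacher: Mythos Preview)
Your proposal is correct and takes essentially the same approach as the paper: both reduce the verification to the fact that $\llbracket\,\cdot\,,\,\cdot\,\rrbracket$ is already a degree $-1$ graded Lie bracket on $C^{\ast}(\mathfrak{g},\mathfrak{g})$. In fact the paper gives no details at all---it simply asserts that the result ``follows as $\llbracket\,\cdot\,,\,\cdot\,\rrbracket$ is a degree $-1$ graded Lie bracket on $C^{\ast}(\mathfrak{g},\mathfrak{g})$'' and points to the analogous computation for AssDer pairs---so your explicit component-wise check of skew-symmetry and your grouping of the nine second-component Jacobi terms according to which barred element appears constitute a genuinely more complete argument than what the paper provides.
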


The proof of this result follows as $\llbracket ~, ~ \rrbracket$ is a degree $-1$ graded Lie bracket on $C^\ast (\mathfrak{g}, \mathfrak{g})$. See \cite{das-mandal} for similar result for AssDer pairs. 
With this new bracket on  $C^\ast_{\mathrm{LeibDer}} (\mathfrak{g}, \mathrm{g})$, we have $\llbracket (\mu, \phi_\mathfrak{g}), (\mu, \phi_\mathfrak{g}) \rrbracket^{\thicksim} = 0$, where $ (\mu, \phi_\mathfrak{g}) \in \mathrm{Hom} (\mathfrak{g}^{\otimes 2} , \mathfrak{g}) \times \mathrm{Hom} (\mathfrak{g} , \mathfrak{g})$ considered as an element in $C^2_{\mathrm{LeibDer}} (\mathfrak{g}, \mathfrak{g})$. Thus $(\mu , \phi_\mathfrak{g}) \in C^2_{\mathrm{LeibDer}} (\mathfrak{g}, \mathfrak{g})$ is a Maurer-Cartan element in the graded Lie algebra $(C^{\ast + 1}_{\mathrm{LeibDer}} (\mathfrak{g}, \mathfrak{g}) , \llbracket ~, ~ \rrbracket^{\thicksim} )$. Moreover, the differential $\partial$ is induced by the Maurer-Cartan element as
\begin{align*}
\partial (f, \overline{f} ) = (-1)^{n-1} \llbracket (\mu, \phi_\mathfrak{g}), (f, \overline{f} )  \rrbracket^{\thicksim}, ~~~ \text{ for } (f, \overline{f} )  \in C^n_{\mathrm{LeibDer}} (\mathfrak{g}, \mathfrak{g}).
\end{align*}
This in particular implies that the graded space of cohomology $H^{\ast +1}_{\mathrm{LeibDer}} (\mathfrak{g}, \mathfrak{g})$ carries a graded Lie algebra structure.

\section{Extensions of LeibDer pairs}\label{sec-extensions}

\subsection{Central extensions}
Central extensions of Leibniz algebras was defined in \cite{loday-pira}. In this subsection, we extend this to LeibDer pairs. We show that isomorphism classes of central extensions are classified by the second cohomology group of LeibDer pair with coefficients in the trivial representation.

Let $(\mathfrak{g}, \phi_\mathfrak{g})$ be a LeibDer pair and $(\mathfrak{a}, \phi_\mathfrak{a})$ be an abelian LeibDer pair, i.e, the Leibniz bracket of $\mathfrak{a}$ is trivial.

\begin{defn}
A central extension of $(\mathfrak{g}, \phi_\mathfrak{g})$  by the abelian LeibDer pair $(\mathfrak{a}, \phi_\mathfrak{a})$ is an exact sequence of LeibDer pairs
\begin{align}\label{central-ext-eqn}
\xymatrix{
0 \ar[r] & (\mathfrak{a}, \phi_\mathfrak{a}) \ar[r]^i & (\mathfrak{h}, \phi_\mathfrak{h}) \ar[r]^p & (\mathfrak{g}, \phi_\mathfrak{g}) \ar[r] & 0
}
\end{align}
such that $[i(a), h] = 0 = [h, i(a)]$, for all $a \in \mathfrak{a}$ and $h \in \mathfrak{h}$.
\end{defn}

One may identify $\mathfrak{a}$ with the corresponding subalgebra of $\mathfrak{h}$  (via the map $i$). With this identification, we have $\phi_\mathfrak{a} = \phi_\mathfrak{h}|_\mathfrak{a}.$

\begin{defn}
Two central extensions $(\mathfrak{h}, \phi_\mathfrak{h})$ and $(\mathfrak{h}', \phi_{\mathfrak{h}'})$ are said to be isomorphic if there exists a LeibDer pair isomorphism $\eta : (\mathfrak{h}, \phi_\mathfrak{h}) \rightarrow (\mathfrak{h}', \phi_{\mathfrak{h}'})$ making the following diagram commutative
\begin{align*}
\xymatrix{
0 \ar[r] & (\mathfrak{a}, \phi_\mathfrak{a}) \ar@{=}[d] \ar[r]^i & (\mathfrak{h}, \phi_\mathfrak{h}) \ar[d]_\eta \ar[r]^p & (\mathfrak{g}, \phi_\mathfrak{g}) \ar@{=}[d] \ar[r] & 0\\
0 \ar[r] & (\mathfrak{a}, \phi_\mathfrak{a}) \ar[r]_{i'} & (\mathfrak{h}', \phi_{\mathfrak{h}'}) \ar[r]_{p'} & (\mathfrak{g}, \phi_\mathfrak{g}) \ar[r] & 0
}
\end{align*}
\end{defn}

Let (\ref{central-ext-eqn}) be a central extension of LeibDer pair. A section of the map $p$ is given by a linear map $s : \mathfrak{g} \rightarrow \mathfrak{h}$ satisfying $p \circ s = \mathrm{id}_\mathfrak{g}$. Section of $p$ always exists.

Let $s : \mathfrak{g} \rightarrow \mathfrak{h}$ be any section of $p$. Define two linear maps $\psi : \mathfrak{g} \otimes \mathfrak{g} \rightarrow \mathfrak{a}$ and $\chi : \mathfrak{g} \rightarrow \mathfrak{a}$ by
\begin{align*}
\psi (x, y) := [s(x) , s(y)] - s [x, y] ~~~~ \text{ and } ~~~~ \chi (x) := \phi_\mathfrak{h} (s(x)) - s(\phi_\mathfrak{g} (x)), ~~~ \text{ for } x, y \in \mathfrak{g}.
\end{align*}

Note that the vector space $\mathfrak{h}$ is isomorphic to the direct sum $\mathfrak{g} \oplus \mathfrak{a}$ via the section $s$. Therefore, one may transfer the structures of $\mathfrak{h}$ to that of $\mathfrak{g} \oplus \mathfrak{a}$. More precisely, the induced bracket and the linear map on $\mathfrak{g} \oplus \mathfrak{a}$ are given by
\begin{align*}
[(x,a), (y, b)]_\psi = ([x, y], \psi (x, y)) ~~~ \text{ and } ~~~
\phi_\chi (x, a) = ( \phi_\mathfrak{g} (x), \phi_\mathfrak{a} (a) + \chi (x)).
\end{align*}
With these notations, we have the following.
\begin{prop}\label{2-co-central}
The vector space $\mathfrak{g} \oplus \mathfrak{a}$ with the above bracket is a Leibniz algebra if and only if $\psi$ is a $2$-cocycle in the Leibniz algebra cohomology of $\mathfrak{g}$ with coefficients in the trivial representation $\mathfrak{a}$. Moreover, $\phi_\chi$ is a derivation for the above Leibniz algebra if and only if $\chi$ satisfies $\delta_L (\chi) + \delta \psi = 0$.
\end{prop}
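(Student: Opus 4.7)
The plan is to verify both claims by direct computation, expanding the Leibniz identity for $[\,,\,]_\psi$ and the derivation condition for $\phi_\chi$ on the direct sum $\mathfrak{g}\oplus\mathfrak{a}$, and then matching the resulting identities against the cocycle conditions. Because the representation of $\mathfrak{g}$ on $\mathfrak{a}$ is trivial (both actions are zero by hypothesis on the abelian LeibDer pair), several terms in the coboundary formulas vanish, and the correspondence becomes essentially tautological.

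For the first assertion, I would compute
\[
[[(x,a),(y,b)]_\psi,(z,c)]_\psi = ([[x,y],z],\,\psi([x,y],z)),
\]
and similarly the two terms on the right of the Leibniz identity, namely $([[x,z],y],\psi([x,z],y))$ and $([x,[y,z]],\psi(x,[y,z]))$. The $\mathfrak{g}$-components match automatically because $\mathfrak{g}$ itself satisfies the Leibniz identity. The $\mathfrak{a}$-components coincide for all $x,y,z$ if and only if
\[
\psi([x,y],z) - \psi([x,z],y) - \psi(x,[y,z]) = 0.
\]
Now specializing the formula for $\delta_L$ recalled in Section 2 to $f=\psi\in C^2(\mathfrak{g},\mathfrak{a})$ with $\mathfrak{a}$ a trivial representation, the three bracket terms $[x,\psi(y,z)]$, $[\psi(x,z),y]$, $[\psi(x,y),z]$ all vanish, so $(\delta_L\psi)(x,y,z)$ reduces precisely (up to an overall sign) to the displayed expression. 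Hence the Leibniz identity on $\mathfrak{g}\oplus\mathfrak{a}$ is equivalent to $\delta_L\psi=0$.

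For the second assertion, assuming $\psi$ already satisfies $\delta_L\psi=0$, I would expand
\[
\phi_\chi\,[(x,a),(y,b)]_\psi = (\phi_\mathfrak{g}[x,y],\,\phi_\mathfrak{a}(\psi(x,y))+\chi([x,y]))
\]
and compare with
\[
[\phi_\chi(x,a),(y,b)]_\psi + [(x,a),\phi_\chi(y,b)]_\psi = ([\phi_\mathfrak{g}(x),y]+[x,\phi_\mathfrak{g}(y)],\,\psi(\phi_\mathfrak{g}(x),y)+\psi(x,\phi_\mathfrak{g}(y))),
\]
where the $\chi(x)$ and $\chi(y)$ contributions in the $\mathfrak{a}$-slot are killed because $\mathfrak{a}$ acts trivially. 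The $\mathfrak{g}$-components agree since $\phi_\mathfrak{g}$ is a derivation, while the $\mathfrak{a}$-components agree iff
\[
\chi([x,y]) = \psi(\phi_\mathfrak{g}(x),y)+\psi(x,\phi_\mathfrak{g}(y))-\phi_\mathfrak{a}(\psi(x,y)).
\]
Specializing the formula for $\delta_L$ to $\chi\in C^1(\mathfrak{g},\mathfrak{a})$ with trivial coefficients gives $(\delta_L\chi)(x,y)=-\chi([x,y])$, and by definition $(\delta\psi)(x,y)=\psi(\phi_\mathfrak{g}(x),y)+\psi(x,\phi_\mathfrak{g}(y))-\phi_\mathfrak{a}(\psi(x,y))$. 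Adding the two yields exactly the required identity, so the derivation condition is equivalent to $\delta_L\chi+\delta\psi=0$.

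The computations are routine; the only real obstacle is careful bookkeeping of signs in the Loday--Pirashvili differential $\delta_L$ (particularly tracking which of the terms survive under a trivial action) and checking that the definition of $\delta$ on $C^2$ matches the combination $\psi(\phi_\mathfrak{g}(x),y)+\psi(x,\phi_\mathfrak{g}(y))-\phi_\mathfrak{a}(\psi(x,y))$ with the correct overall sign. Once these signs are in agreement, both equivalences follow from the expansions above without further work.
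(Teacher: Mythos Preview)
Your proposal is correct and follows essentially the same direct-computation approach as the paper: expand the Leibniz identity and the derivation condition on $\mathfrak{g}\oplus\mathfrak{a}$, note that the $\mathfrak{g}$-components match automatically, and identify the $\mathfrak{a}$-component conditions with $\delta_L\psi=0$ and $\delta_L\chi+\delta\psi=0$ respectively. Your write-up is in fact slightly more detailed than the paper's, spelling out explicitly why the action terms in $\delta_L$ vanish under the trivial representation and verifying the sign conventions for $\delta_L\chi$ and $\delta\psi$.
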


\begin{proof}
The bracket $[~, ~]_\psi$ is a Leibniz bracket if it satisfies
\begin{align*}
[[ (x, a), (y, b)]_\psi, (z, c)]_\psi = [[ (x, a), (z, c)]_\psi , (y, b) ]_\psi + [(x, a), [(y, b), (z, c)]_\psi ]_\psi.
\end{align*}
This is equivalent to
\begin{align*}
\psi ([x, y], z) = \psi ([x, z], y) + \psi (x, [y,z])
\end{align*}
which is same as $\delta_L (\psi) = 0$, where $\delta_L$ is the Leibniz algebra coboundary operator of $\mathfrak{g}$ with coefficients in the trivial representation $\mathfrak{a}$.

The map $\phi_\chi$ is a derivation for the bracket $[~, ~]_\psi$ if 
\begin{align*}
 \phi_\chi [ (x, a), (y, b)]_\psi = [ \phi_\chi (x, a), (y, b)]_\psi  + [ (x, a), \phi_\chi (y, b)]_\psi.
\end{align*}
This condition is equivalent to
\begin{align*}
\phi_\mathfrak{a} (\psi (x, y)) + \chi ([x, y]) = \psi ( \phi_\mathfrak{g} (x), y) + \psi (x, \phi_\mathfrak{g} (y)),
\end{align*}
or, equivalently,  $\big( \delta_L (\chi) + \delta \psi \big)(x, y) = 0$. Hence the proof.
\end{proof}

It follows from the above proposition that the vector space $\mathfrak{g} \oplus \mathfrak{a}$ with the above bracket and linear map forms a LeibDer pair if and only if $(\psi, \chi)$ is a $2$-cocycle of the LeibDer pair $(\mathfrak{g}, \phi_\mathfrak{g})$ with coefficients in the trivial representation $(\mathfrak{a}, \phi_\mathfrak{a}).$

The cohomology class of the $2$-cocycle $(\psi, \chi)$ does not depend on the choice of sections of $p$. Let $s_1$ and $s_2$ be two sections of $p$. Consider the map $u : \mathfrak{g} \rightarrow \mathfrak{a}$ by $u(x) : = s_1 (x) - s_2 (x)$. Then we have
\begin{align*}
\psi_1 (x, y) =~& [s_1(x), s_1 (y)] - s_1 [x, y]\\
=~& [ s_2 (x) + u(x), s_2 (y) + u(y)] - s_2 [x, y] - u[x, y] \\
=~& \psi_2 (x, y) - u[x, y] \qquad (\text{as } u(x), u(y) \in \mathfrak{a})
\end{align*}
and
\begin{align*}
\chi_1 (x) = \phi_\mathfrak{h} ( s_1(x)) - s_1 ( \phi_\mathfrak{g} (x)) =~& \phi_\mathfrak{h} ( s_2 (x) + u(x)) - s_2 ( \phi_\mathfrak{g}(x)) + u (\phi_\mathfrak{g} (x)) \\
=~& \phi_\mathfrak{h} ( s_2 (x)) - s_2 (\phi_\mathfrak{g} (x)) + \phi_\mathfrak{a} ( u(x)) - u (\phi_\mathfrak{g} (x)) \\
=~& \chi_2 (x) + \phi_\mathfrak{a} (u(x)) - u ( \phi_\mathfrak{g} (x)).
\end{align*}
This shows that $(\psi_1, \chi_1) - (\psi_2, \chi_2) = \partial u$. Therefore, the cohomology classes associated to the sections $s_1$ and $s_2$ are same.

\begin{thm}\label{final-theorem-central}
Let $(\mathfrak{g}, \phi_\mathfrak{g})$ be a LeibDer pair and $(\mathfrak{a}, \phi_\mathfrak{a})$ be an abelian LeibDer pair. Then the isomorphism classes of central extensions of $(\mathfrak{g}, \phi_\mathfrak{g})$ by $(\mathfrak{a}, \phi_\mathfrak{a})$ are classified by the second cohomology group $H^2_{\mathrm{LeibDer}} (\mathfrak{g}, \mathfrak{a})$.
\end{thm}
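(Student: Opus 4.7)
The plan is to construct a bijection $\Theta$ from the set of isomorphism classes of central extensions of $(\mathfrak{g}, \phi_\mathfrak{g})$ by $(\mathfrak{a}, \phi_\mathfrak{a})$ to the group $H^2_{\mathrm{LeibDer}}(\mathfrak{g}, \mathfrak{a})$. Given a central extension as in (\ref{central-ext-eqn}), pick a section $s:\mathfrak{g}\to\mathfrak{h}$ and form the pair $(\psi,\chi)$ as in the paragraph preceding Proposition \ref{2-co-central}; that proposition shows $(\psi,\chi)$ is a $2$-cocycle, and the direct calculation already carried out above shows that changing the section changes $(\psi,\chi)$ by a coboundary $\partial u$. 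This supplies a well-defined assignment from central extensions to $H^2_{\mathrm{LeibDer}}(\mathfrak{g},\mathfrak{a})$.

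Next I would verify that isomorphic central extensions produce the same cohomology class. If $\eta:(\mathfrak{h},\phi_\mathfrak{h})\to(\mathfrak{h}',\phi_{\mathfrak{h}'})$ is an isomorphism of central extensions and $s'$ is a section of $p'$, then $s:=\eta^{-1}\circ s'$ is a section of $p$; because $\eta$ restricts to the identity on $\mathfrak{a}$ and intertwines the derivations, the cocycles $(\psi,\chi)$ and $(\psi',\chi')$ computed from $s$ and $s'$ coincide. Hence $\Theta$ descends to isomorphism classes.

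For surjectivity, starting from a $2$-cocycle $(\psi,\chi)\in C^2_{\mathrm{LeibDer}}(\mathfrak{g},\mathfrak{a})$ I would place on $\mathfrak{g}\oplus\mathfrak{a}$ the bracket and linear map
\begin{align*}
[(x,a),(y,b)]_\psi = ([x,y],\psi(x,y)),\qquad \phi_\chi(x,a)=(\phi_\mathfrak{g}(x),\phi_\mathfrak{a}(a)+\chi(x)),
\end{align*}
noting that Proposition \ref{2-co-central} exactly says that the cocycle equations $\delta_L\psi=0$ and $\delta_L\chi+\delta\psi=0$ are equivalent to $(\mathfrak{g}\oplus\mathfrak{a},[\,,\,]_\psi,\phi_\chi)$ being a LeibDer pair. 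Together with the canonical inclusion and projection this yields a central extension whose $2$-cocycle, with respect to the section $x\mapsto(x,0)$, is $(\psi,\chi)$. So $\Theta$ is surjective.

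For injectivity, suppose two central extensions produce cocycles $(\psi,\chi)$ and $(\psi',\chi')$ differing by $\partial u=(\delta_L u,-\delta u)$ for some $u:\mathfrak{g}\to\mathfrak{a}$. After replacing each extension by the isomorphic one built in the previous paragraph, I would define $\eta:\mathfrak{g}\oplus_\psi\mathfrak{a}\to\mathfrak{g}\oplus_{\psi'}\mathfrak{a}$ by $\eta(x,a)=(x,a-u(x))$ and check that it is a LeibDer pair isomorphism sitting over the identities of $\mathfrak{a}$ and $\mathfrak{g}$: compatibility with the bracket is equivalent to $\psi'-\psi=\delta_L u$ (using the triviality of the action on $\mathfrak{a}$), while compatibility with the derivations is equivalent to $\chi'-\chi=\phi_\mathfrak{a} u-u\phi_\mathfrak{g}=-\delta u$, both of which are given. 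The main obstacle I anticipate is bookkeeping the sign convention in $\partial f=(\delta_L f,-\delta f)$ for $f\in C^1_{\mathrm{LeibDer}}$ so that the shift $a\mapsto a-u(x)$ (rather than $a+u(x)$) is chosen correctly, and confirming that every step above really only uses the trivial-representation structure on $\mathfrak{a}$; once those sign checks are made, the bijectivity of $\Theta$ is straightforward.
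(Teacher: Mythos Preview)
Your proposal is correct and follows essentially the same approach as the paper: both directions of the bijection are obtained by passing between a central extension with a chosen section and the explicit LeibDer pair $(\mathfrak{g}\oplus\mathfrak{a},[\,,\,]_\psi,\phi_\chi)$, invoking Proposition~\ref{2-co-central} for the cocycle conditions and the section-change computation for independence of choices. The only cosmetic differences are that the paper transports a section forward via $s':=\eta\circ s$ (you pull one back via $\eta^{-1}$) and uses the shift $\eta(x,a)=(x,a+v(x))$ where you use $a-u(x)$; these are equivalent up to the sign convention you already flagged.
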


\begin{proof}
Let $(\mathfrak{h}, \phi_\mathfrak{h})$ and $(\mathfrak{h}' , \phi_{\mathfrak{h}'})$ be two isomorphic central extensions. Suppose the isomorphism is given by a map $\eta : (\mathfrak{h}, \phi_\mathfrak{h}) \rightarrow (\mathfrak{h}' , \phi_{\mathfrak{h}'})$. For any section $s$ of the map $p$, we have
\begin{align*}
p' \circ (\eta \circ s) = (p' \circ \eta) \circ s = p \circ s = \mathrm{id}_\mathfrak{g}.
\end{align*}
This shows that $s' := \eta \circ s$ is a section of the map $p'$. Since $\eta$ is a morphism of LeibDer pairs, we have $\eta|_\mathfrak{a} = \mathrm{id}_\mathfrak{a}$. Hence, we get
\begin{align*}
\psi' (x, y) =~& [s'(x), s'(y)] - s' [x, y] = \eta ( [s(x), s(y)] - [x, y]) = \psi (x, y),\\
\chi' (x) =~& \phi_{\mathfrak{h}'} (s'(x)) - s'( \phi_\mathfrak{g} (x)) = \eta ( \phi_\mathfrak{h} (s(x)) - s (\phi_\mathfrak{g} (x)) ) = \chi (x).
\end{align*}
Therefore, isomorphic central extensions produce same $2$-cocycle, hence, corresponds to the same element in $H^2_{\mathrm{LeibDer}} (\mathfrak{g}, \mathfrak{a}).$

To prove the converse part, consider two cohomologous $2$-cocycles $(\psi, \chi)$ and $(\psi', \chi')$. That is, there exists a linear map $v : \mathfrak{g} \rightarrow \mathfrak{a}$ such that $(\psi, \chi) - (\psi', \chi') = \partial v$. Consider the corresponding LeibDer pairs $(\mathfrak{g} \oplus \mathfrak{a}, [~, ~]_\psi, \phi_\chi )$ and $(\mathfrak{g} \oplus \mathfrak{a}, [~, ~]_{\psi'}, \phi_{\chi'} )$ given in Proposition \ref{2-co-central}. These two LeibDer pairs are isomorphic via the map $\eta : \mathfrak{g} \oplus \mathfrak{a} \rightarrow \mathfrak{g} \oplus \mathfrak{a}$ given by $\eta (x, a) = (x, a + v(x)).$ The map $\eta$ is in fact an isomorphism of central extensions. Hence the proof.
\end{proof}

\subsection{Extensions of a pair of derivations}
Let 
\begin{align}\label{cent-ex}
\xymatrix{
0 \ar[r] & \mathfrak{a} \ar[r]^i & \mathfrak{h} \ar[r]^p & \mathfrak{g} \ar[r] & 0
}
\end{align}
be a fixed central extensions of Leibniz algebras. Given a pair of derivations $(\phi_\mathfrak{g}, \phi_\mathfrak{a}) \in \mathrm{Der} (\mathfrak{g}) \times \mathrm{Der} (\mathfrak{a})$, here we study extensions of them to a derivation $\phi_\mathfrak{h} \in \mathrm{Der} (\mathfrak{h})$ which makes
\begin{align}\label{new-cent-ex}
\xymatrix{
0 \ar[r] & (\mathfrak{a}, \phi_\mathfrak{a}) \ar[r]^i & (\mathfrak{h}, \phi_\mathfrak{h}) \ar[r]^p & (\mathfrak{g}, \phi_\mathfrak{g}) \ar[r] & 0
}
\end{align}
into an exact sequence of LeibDer pairs. In other words, $(\mathfrak{h}, \phi_\mathfrak{h})$ is a central extension of the LeibDer pair $(\mathfrak{g}, \phi_\mathfrak{g}) $ by $(\mathfrak{a}, \phi_\mathfrak{a})$. In such a case, the pair  $(\phi_\mathfrak{g}, \phi_\mathfrak{a}) \in \mathrm{Der} (\mathfrak{g}) \times \mathrm{Der} (\mathfrak{a})$ is said to be extensible.

Let $s : \mathfrak{g} \rightarrow \mathfrak{h}$ be a section of (\ref{cent-ex}). We define a map $\psi : \mathfrak{g} \otimes \mathfrak{g} \rightarrow \mathfrak{a}$ by 
\begin{align*}
\psi  (x, y) := [s(x), s(y)] - s[x, y].
\end{align*}
Given a pair of derivations  $(\phi_\mathfrak{g}, \phi_\mathfrak{a}) \in \mathrm{Der} (\mathfrak{g}) \times \mathrm{Der} (\mathfrak{a})$, we define another map
$\mathrm{Ob}^\mathfrak{h}_{(\phi_\mathfrak{g}, \phi_\mathfrak{a}) } : \mathfrak{g}^{\otimes 2} \rightarrow \mathfrak{a}$ by
\begin{align*}
\mathrm{Ob}^\mathfrak{h}_{(\phi_\mathfrak{g}, \phi_\mathfrak{a})}  (x, y) := \phi_\mathfrak{ a} (\psi (x, y)) - \psi ( \phi_\mathfrak{g} (x), y) - \psi ( x, \phi_\mathfrak{g} (y)).
\end{align*}

\begin{prop}
The map $\mathrm{Ob}^\mathfrak{h}_{(\phi_\mathfrak{g}, \phi_\mathfrak{a})} : \mathfrak{g}^{\otimes 2} \rightarrow \mathfrak{a}$ is a $2$-cycle in the cohomology of the Leibniz algebra $\mathfrak{g}$  with coefficients in the trivial representation $\mathfrak{a}$. Moreover, the cohomology class $[\mathrm{Ob}^\mathfrak{h}_{(\phi_\mathfrak{g}, \phi_\mathfrak{a})}] \in H^2 (\mathfrak{g}, \mathfrak{a})$ does not depend on the choice of sections.
\end{prop}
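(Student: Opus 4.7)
The plan is to reduce both claims to the single identity $\mathrm{Ob}^{\mathfrak{h}}_{(\phi_\mathfrak{g}, \phi_\mathfrak{a})} = -\delta \psi$, where $\delta$ is the auxiliary operator on $\mathrm{Hom}(\mathfrak{g}^{\otimes 2}, \mathfrak{a})$ introduced in the definition of the LeibDer coboundary. Unpacking that definition against the representation $(\mathfrak{a}, \phi_\mathfrak{a})$ of $(\mathfrak{g}, \phi_\mathfrak{g})$ gives
\begin{align*}
(\delta \psi)(x, y) = \psi(\phi_\mathfrak{g}(x), y) + \psi(x, \phi_\mathfrak{g}(y)) - \phi_\mathfrak{a}(\psi(x, y)),
\end{align*}
so comparing with the definition of $\mathrm{Ob}^{\mathfrak{h}}_{(\phi_\mathfrak{g}, \phi_\mathfrak{a})}$ immediately yields the claimed identity.

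For the cocycle property, I note that because (\ref{cent-ex}) is an exact sequence of Leibniz algebras and $\mathfrak{a}$ is central in $\mathfrak{h}$, the same argument used in the first part of Proposition \ref{2-co-central} shows that $\psi$ is a Leibniz $2$-cocycle, i.e., $\delta_L \psi = 0$. Combining this with the commutation lemma $\delta_L \circ \delta = \delta \circ \delta_L$ gives
\begin{align*}
\delta_L(\mathrm{Ob}^{\mathfrak{h}}_{(\phi_\mathfrak{g}, \phi_\mathfrak{a})}) = -\delta_L(\delta \psi) = -\delta(\delta_L \psi) = 0,
\end{align*}
so $\mathrm{Ob}^{\mathfrak{h}}_{(\phi_\mathfrak{g}, \phi_\mathfrak{a})}$ lies in the kernel of the Leibniz coboundary with coefficients in the trivial representation $\mathfrak{a}$.

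For independence of the choice of section, let $s_1, s_2$ be two sections of $p$ with associated cocycles $\psi_1, \psi_2$ and obstructions $\mathrm{Ob}_1, \mathrm{Ob}_2$. The difference $u := s_1 - s_2$ lands in $\mathfrak{a}$, and the computation already performed in the proof of Theorem \ref{final-theorem-central} (specialized to its Leibniz component) shows $\psi_1 - \psi_2 = \delta_L u$. Using linearity of $\delta$ and once again the commutation lemma,
\begin{align*}
\mathrm{Ob}_1 - \mathrm{Ob}_2 = -\delta(\psi_1 - \psi_2) = -\delta(\delta_L u) = -\delta_L(\delta u),
\end{align*}
which is manifestly a Leibniz coboundary. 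Therefore $[\mathrm{Ob}^{\mathfrak{h}}_{(\phi_\mathfrak{g}, \phi_\mathfrak{a})}] \in H^2(\mathfrak{g}, \mathfrak{a})$ is independent of the chosen section.

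The only step that requires a small amount of insight is the identification $\mathrm{Ob}^{\mathfrak{h}}_{(\phi_\mathfrak{g}, \phi_\mathfrak{a})} = -\delta \psi$; after that, both assertions are immediate formal consequences of results established earlier in the paper, so I do not expect any substantial obstacle. The uniform usage of $\delta$ and the commutation lemma is what makes the argument short, and it also foreshadows the role of these obstructions as representatives of LeibDer-cohomology classes.
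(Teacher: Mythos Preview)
Your proof is correct, but it takes a different route from the paper's. The paper proceeds by direct term-by-term expansion: it writes out $(\delta_L \mathrm{Ob}^\mathfrak{h}_{(\phi_\mathfrak{g}, \phi_\mathfrak{a})})(x,y,z)$ explicitly using the definition of $\mathrm{Ob}$ and the derivation property of $\phi_\mathfrak{g}$, and then cancels terms by hand using $\delta_L\psi=0$; for the section-independence it similarly expands $\mathrm{Ob}_1-\mathrm{Ob}_2$ and recognises the answer as $\delta_L(\phi_\mathfrak{a}\circ u - u\circ\phi_\mathfrak{g})$. Your argument instead isolates the single structural identity $\mathrm{Ob}^\mathfrak{h}_{(\phi_\mathfrak{g}, \phi_\mathfrak{a})}=-\delta\psi$ and then lets the commutation lemma $\delta_L\circ\delta=\delta\circ\delta_L$ do all the work, both for the cocycle property and for section-independence (noting that $-\delta u=\phi_\mathfrak{a}\circ u-u\circ\phi_\mathfrak{g}$, so your coboundary agrees with the paper's). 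Your approach is shorter and more conceptual, and it makes transparent that the obstruction is the ``derivation component'' of the LeibDer $2$-cocycle $(\psi,\chi)$; the paper's approach is more self-contained in that it does not require the reader to invoke the operator $\delta$ or its commutation with $\delta_L$. One minor remark: the computation $\psi_1-\psi_2=\delta_L u$ you cite actually appears in the discussion \emph{preceding} Theorem~\ref{final-theorem-central} rather than in its proof, but the content is exactly as you describe.
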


\begin{proof}
First observe that $\psi$ is a $2$-cocycle in the cohomology of the Leibniz algebra $\mathfrak{g}$ with coefficients in the trivial representation $\mathfrak{a}$. Thus, we have
\begin{align*}
&(\delta_L \mathrm{Ob}^\mathfrak{h}_{(\phi_\mathfrak{g}, \phi_\mathfrak{a}) } ) (x, y, z) \\
&= - \mathrm{Ob}^\mathfrak{h}_{(\phi_\mathfrak{g}, \phi_\mathfrak{a}) } ([x, y], z) + \mathrm{Ob}^\mathfrak{h}_{(\phi_\mathfrak{g}, \phi_\mathfrak{a}) } ([x,z], y) + \mathrm{Ob}^\mathfrak{h}_{(\phi_\mathfrak{g}, \phi_\mathfrak{a}) } (x, [y, z]) \\
&= - \cancel{\phi_\mathfrak{a} ( \psi ([x, y], z))} + \psi ( \phi_\mathfrak{g} [x, y], z ) + \psi ( [x, y], \phi_\mathfrak{g} (z))
+ \cancel{\phi_\mathfrak{a} ( \psi ([x, z], y) )} - \psi ( \phi_\mathfrak{g} [x, z] , y ) \\ & \quad - \psi ( [x, z], \phi_\mathfrak{g} (y)) 
+ \cancel{\phi_\mathfrak{a} ( \psi (x, [y,z]) )}  - \psi (\phi_\mathfrak{g} (x), [y,z]) - \psi ( x, \phi_\mathfrak{g} [y, z]) \\
&=  \psi ( [\phi_\mathfrak{g}(x), y], z) + \psi ( [x, \phi_\mathfrak{g} (y)], z) + \psi ( [x, y], \phi_\mathfrak{g} (z)) 
- \psi ( [\phi_\mathfrak{g} (x), z ], y ) - \psi ( [x, \phi_\mathfrak{g} (z)], y) \\ & \quad - \psi ( [x, y], \phi_\mathfrak{g} (z)) - \psi ( \phi_\mathfrak{g} (x), [y, z]) - \psi (x, [\phi_\mathfrak{g}(y), z]) - \psi ( x, [y, \phi_\mathfrak{g} (z) ])\\
&= 0.
\end{align*}
Therefore, $\mathrm{Ob}^\mathfrak{h}_{(\phi_\mathfrak{g}, \phi_\mathfrak{a}) }$ is a $2$-cocycle. To prove the second part, let $s_1$ and $s_2$ be two sections of (\ref{cent-ex}). Consider the map $u : \mathfrak{g} \rightarrow \mathfrak{a}$ given by $u (x) := s_1(x) - s_2 (x).$ Then
\begin{align*}
\psi_1 (x, y) = \psi_2 (x, y) - u [x, y].
\end{align*}
If ${}^{1}\mathrm{Ob}^\mathfrak{h}_{(\phi_\mathfrak{g}, \phi_\mathfrak{a}) }$ and ${}^2\mathrm{Ob}^\mathfrak{h}_{(\phi_\mathfrak{g}, \phi_\mathfrak{a}) }$ denote the two cocycles corresponding to the sections $s_1$ and $s_2$, then
\begin{align*}
& {}^{1}\mathrm{Ob}^\mathfrak{h}_{(\phi_\mathfrak{g}, \phi_\mathfrak{a}) } (x, y) \\
& = \phi_\mathfrak{a} (\psi_1 (x, y)) - \psi_1 ( \phi_\mathfrak{g} (x), y) - \psi_1 ( x, \phi_\mathfrak{g} (y)) \\
&= \phi_\mathfrak{a} ( \psi_2 (x, y)) - \phi_\mathfrak{a} ( u[x, y]) - \psi_2 ( \phi_\mathfrak{g}(x), y) + u ([\phi_\mathfrak{g} (x), y]) - \psi_2 ( x, \phi_\mathfrak{g}(y)) + u ( [x, \phi_\mathfrak{g} (y)]) \\
& = {}^{2}\mathrm{Ob}^\mathfrak{h}_{(\phi_\mathfrak{g}, \phi_\mathfrak{a}) }(x, y) + \delta_L ( \phi_\mathfrak{a} \circ  u - u \circ \phi_\mathfrak{g} ) (x, y).
\end{align*}
This shows that the $2$-cocycles ${}^{1}\mathrm{Ob}^\mathfrak{h}_{(\phi_\mathfrak{g}, \phi_\mathfrak{a}) }$ and ${}^{2}\mathrm{Ob}^\mathfrak{h}_{(\phi_\mathfrak{g}, \phi_\mathfrak{a}) }$ are cohomologous. Hence they correspond to the same cohomology class in $H^2(\mathfrak{g}, \mathfrak{a}).$
\end{proof}

The cohomology class $[\mathrm{Ob}^\mathfrak{h}_{(\phi_\mathfrak{g}, \phi_\mathfrak{a})}]  \in H^2 (\mathfrak{g}, \mathfrak{a}) $ is called the obstruction class to extend the pair of derivations $(\phi_\mathfrak{g}, \phi_\mathfrak{a}).$

\begin{thm}\label{final-theorem-pair}
Let (\ref{cent-ex}) be a central extension of Leibniz algebras. A pair of derivations $(\phi_\mathfrak{g}, \phi_\mathfrak{a}) \in \mathrm{Der} (\mathfrak{g}) \times \mathrm{Der} (\mathfrak{a})$ is extensible if and only if the obstruction class $[\mathrm{Ob}^\mathfrak{h}_{(\phi_\mathfrak{g}, \phi_\mathfrak{a})}]  \in H^2 (\mathfrak{g}, \mathfrak{a}) $ is trivial.
\end{thm}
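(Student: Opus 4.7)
The strategy is to parametrize a candidate extension $\phi_\mathfrak{h}$ by a single $1$-cochain $\chi \in \mathrm{Hom}(\mathfrak{g},\mathfrak{a})$ and identify the derivation condition on $\phi_\mathfrak{h}$ with the equation $\delta_L \chi = \mathrm{Ob}^\mathfrak{h}_{(\phi_\mathfrak{g},\phi_\mathfrak{a})}$ in the Leibniz cochain complex.

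Fix a section $s:\mathfrak{g}\to\mathfrak{h}$ of $p$, giving the vector-space identification $\mathfrak{h}\cong\mathfrak{g}\oplus\mathfrak{a}$ under which the bracket becomes $[(x,a),(y,b)]_\psi = ([x,y],\psi(x,y))$. Any linear lift $\phi_\mathfrak{h}$ of $\phi_\mathfrak{g}$ that restricts to $\phi_\mathfrak{a}$ on $\mathfrak{a}$ is specified by the map $\chi(x):=\phi_\mathfrak{h}(s(x))-s(\phi_\mathfrak{g}(x))$, which lands in $\mathfrak{a}=\ker p$; explicitly, $\phi_\mathfrak{h}(x,a) = (\phi_\mathfrak{g}(x),\chi(x)+\phi_\mathfrak{a}(a))$.

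For the forward direction, suppose $\phi_\mathfrak{h}$ is a derivation. Expanding $\phi_\mathfrak{h}$ on the identity $[s(x),s(y)]=s([x,y])+\psi(x,y)$ and using centrality of $\mathfrak{a}$ to discard all brackets involving $\chi$-terms---the very computation carried out in the proof of Proposition \ref{2-co-central}---one obtains
\begin{align*}
\chi([x,y])+\phi_\mathfrak{a}(\psi(x,y)) = \psi(\phi_\mathfrak{g}(x),y)+\psi(x,\phi_\mathfrak{g}(y)),
\end{align*}
equivalently $\chi([x,y]) = -\mathrm{Ob}^\mathfrak{h}_{(\phi_\mathfrak{g},\phi_\mathfrak{a})}(x,y)$. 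Since $\mathfrak{a}$ is a trivial $\mathfrak{g}$-representation the Leibniz differential of a $1$-cochain collapses to $(\delta_L \chi)(x,y)=-\chi([x,y])$, so $\mathrm{Ob}^\mathfrak{h}_{(\phi_\mathfrak{g},\phi_\mathfrak{a})}=\delta_L \chi$ and the class vanishes in $H^2(\mathfrak{g},\mathfrak{a})$.

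For the converse, if $[\mathrm{Ob}^\mathfrak{h}_{(\phi_\mathfrak{g},\phi_\mathfrak{a})}]=0$ pick $\chi$ with $\delta_L \chi = \mathrm{Ob}^\mathfrak{h}_{(\phi_\mathfrak{g},\phi_\mathfrak{a})}$ and define $\phi_\mathfrak{h}$ on $\mathfrak{g}\oplus\mathfrak{a}$ by the formula above. By construction $\phi_\mathfrak{h}|_\mathfrak{a}=\phi_\mathfrak{a}$ and $p\circ\phi_\mathfrak{h}=\phi_\mathfrak{g}\circ p$, so (\ref{new-cent-ex}) will be exact as a sequence of LeibDer pairs once one checks the derivation identity, which is the previous computation read in reverse. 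The main obstacle is purely bookkeeping: once the identification $\mathfrak{h}\cong\mathfrak{g}\oplus\mathfrak{a}$ is in place and the trivial-representation simplification of $\delta_L$ is noted, the proof is a direct echo of Proposition \ref{2-co-central}, merely with the roles of "fixed data" and "varying data" interchanged.
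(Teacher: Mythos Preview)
Your proposal is correct and follows essentially the same route as the paper: fix a section, write any admissible lift as $\phi_\mathfrak{h}(x,a)=(\phi_\mathfrak{g}(x),\chi(x)+\phi_\mathfrak{a}(a))$ for some $\chi\in\mathrm{Hom}(\mathfrak{g},\mathfrak{a})$, and check that the derivation identity for $\phi_\mathfrak{h}$ is equivalent to $\mathrm{Ob}^\mathfrak{h}_{(\phi_\mathfrak{g},\phi_\mathfrak{a})}=\delta_L\chi$. The paper uses the letter $\lambda$ in place of your $\chi$ and writes the computation directly on elements $s(x)+a\in\mathfrak{h}$ rather than invoking Proposition~\ref{2-co-central}, but the argument is the same.
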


\begin{proof}
Suppose there exists a derivations $\phi_\mathfrak{h} \in \mathrm{Der} (\mathfrak{h})$ such that (\ref{new-cent-ex}) is an exact sequence of LeibDer pairs. For any $x \in \mathfrak{g},$ we observe that $ p ( \phi_\mathfrak{h} (s(x)) - s (\phi_\mathfrak{g} (x)) ) = 0.$ Hence $ \phi_\mathfrak{h} (s(x)) - s (\phi_\mathfrak{g} (x)) \in \mathrm{ker}(p) = \mathrm{im}(i)$. We define $\lambda : \mathrm{g} \rightarrow \mathfrak{a}$ by
\begin{align*}
\lambda (x)  = \phi_\mathfrak{h} (s(x)) - s (\phi_\mathfrak{g} (x)).
\end{align*}
For any $s (x) + a \in \mathfrak{h}$, we have
\begin{align}\label{phi-h}
\phi_\mathfrak{h} ( s(x) + a ) = \phi_\mathfrak{h} ( s(x)) - s( \phi_\mathfrak{g} (x)) + s (\phi_\mathfrak{g} (x)) + \phi_\mathfrak{a} (a)
= s (\phi_\mathfrak{g} (x)) + \lambda (x) + \phi_\mathfrak{a} (a). 
\end{align}
Since $\phi_\mathfrak{h}$ is a derivation, for any $s(x) + a, ~ s(y) + b \in \mathfrak{h}$, we have
\begin{align*}
\phi_\mathfrak{h} [ s(x) + a, s(y) + b] = [\phi_\mathfrak{h} (s(x) + a), s(y) + b ] + [ s(x) + a , \phi_\mathfrak{h} (s(y) + b)].
\end{align*}
By using the expression (\ref{phi-h}) of $\phi_\mathfrak{h}$, we get from the above equality that
\begin{align*}
s ( \phi_\mathfrak{g} [x, y]) + \lambda ([x, y]) + \phi_\mathfrak{a} (\psi (x, y)) = s ( [\phi_\mathfrak{g} (x), y]) + \psi ( \phi_\mathfrak{g} (x), y ) + s ( [ x, \phi_\mathfrak{g} (y)] ) + \psi (x, \phi_\mathfrak{g} (y)).
\end{align*}
This implies that
\begin{align*}
\phi_\mathfrak{a} ( \psi (x, y)) - \psi ( \phi_\mathfrak{g} (x), y ) - \psi ( x, \phi_\mathfrak{g}(y)) = - \lambda ([x, y]),
\end{align*}
or, equivalently, $\mathrm{Ob}^\mathfrak{h}_{(\phi_\mathfrak{g}, \phi_\mathfrak{a})} = \partial \lambda$ is a coboundary. Hence the obstruction class $[\mathrm{Ob}^\mathfrak{h}_{(\phi_\mathfrak{g}, \phi_\mathfrak{a})} ] \in H^2( \mathfrak{g}, \mathfrak{a})$ is trivial.

To prove the converse part, suppose $\mathrm{Ob}^\mathfrak{h}_{(\phi_\mathfrak{g}, \phi_\mathfrak{a})} $ is given by a coboundary, say $\mathrm{Ob}^\mathfrak{h}_{(\phi_\mathfrak{g}, \phi_\mathfrak{a})}  = \partial \lambda$. We define  a map $\phi_\mathfrak{h} : \mathfrak{h} \rightarrow \mathfrak{h}$ by
\begin{align*}
\phi_\mathfrak{h} ( s(x) + a ) = s ( \phi_\mathfrak{ g}(x) ) + \lambda (x) + \phi_\mathfrak{a} (a).
\end{align*}
Then $\phi_\mathfrak{h}$ is a derivation on $\mathfrak{h}$ and (\ref{new-cent-ex}) is an exact sequence of LeibDer pairs. Hence the pair $(\phi_\mathfrak{g}, \phi_\mathfrak{a})$ is extensible.
\end{proof}

Thus, we obtain the following.

\begin{thm}
If $H^2 (\mathfrak{g}, \mathfrak{a}) = 0$ then any pair of derivations $(\phi_\mathfrak{g}, \phi_\mathfrak{a}) \in \mathrm{Der} (\mathfrak{g}) \times \mathrm{Der} (\mathfrak{a})$ is extensible.
\end{thm}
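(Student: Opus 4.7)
The statement is an immediate corollary of Theorem \ref{final-theorem-pair}, so the proof plan is essentially a one-line invocation of that characterization. My approach would be as follows.

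Given any pair of derivations $(\phi_\mathfrak{g}, \phi_\mathfrak{a}) \in \mathrm{Der}(\mathfrak{g}) \times \mathrm{Der}(\mathfrak{a})$, I first form the associated obstruction cocycle $\mathrm{Ob}^\mathfrak{h}_{(\phi_\mathfrak{g}, \phi_\mathfrak{a})} \in C^2(\mathfrak{g}, \mathfrak{a})$ by choosing any section $s : \mathfrak{g} \to \mathfrak{h}$ of the central extension $(\ref{cent-ex})$ and using the formula $\mathrm{Ob}^\mathfrak{h}_{(\phi_\mathfrak{g}, \phi_\mathfrak{a})}(x, y) = \phi_\mathfrak{a}(\psi(x,y)) - \psi(\phi_\mathfrak{g}(x), y) - \psi(x, \phi_\mathfrak{g}(y))$. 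By the preceding proposition, this is a genuine $2$-cocycle and its cohomology class $[\mathrm{Ob}^\mathfrak{h}_{(\phi_\mathfrak{g}, \phi_\mathfrak{a})}] \in H^2(\mathfrak{g}, \mathfrak{a})$ is well-defined, independent of the section.

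Now invoke the hypothesis $H^2(\mathfrak{g}, \mathfrak{a}) = 0$: the obstruction class is automatically trivial, since the entire cohomology group in which it lives is zero. By Theorem \ref{final-theorem-pair}, vanishing of this class is equivalent to extensibility of the pair $(\phi_\mathfrak{g}, \phi_\mathfrak{a})$, so we conclude that such a pair is always extensible. There is no genuine obstacle here because all the substantive content — constructing the cocycle, proving independence of the section, and proving that its triviality produces a derivation $\phi_\mathfrak{h}$ via the explicit formula $\phi_\mathfrak{h}(s(x) + a) = s(\phi_\mathfrak{g}(x)) + \lambda(x) + \phi_\mathfrak{a}(a)$ with $\partial \lambda = \mathrm{Ob}^\mathfrak{h}_{(\phi_\mathfrak{g}, \phi_\mathfrak{a})}$ — has already been done in Theorem \ref{final-theorem-pair}.

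Thus the proof is essentially one sentence: since $[\mathrm{Ob}^\mathfrak{h}_{(\phi_\mathfrak{g}, \phi_\mathfrak{a})}]$ lies in the zero group $H^2(\mathfrak{g}, \mathfrak{a})$, it vanishes, and Theorem \ref{final-theorem-pair} supplies the extension $\phi_\mathfrak{h} \in \mathrm{Der}(\mathfrak{h})$ fitting into the exact sequence $(\ref{new-cent-ex})$ of LeibDer pairs.
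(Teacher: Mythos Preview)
Your proposal is correct and matches the paper's approach exactly: the paper also presents this theorem as an immediate consequence of Theorem \ref{final-theorem-pair}, introducing it with ``Thus, we obtain the following'' and giving no separate proof. Your one-line invocation of the vanishing obstruction class is precisely what is intended.
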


\subsection{Abelian extensions}\label{subsec-abelian-extension}
Abelian extensions of Leibniz algebras by their representations are studied in \cite{loday-pira}. They are classified (up to equivalence) by the second cohomology group of Leibniz algebras. In this section, we extend this to LeibDer pairs.

Let $(\mathfrak{g}, \phi_\mathfrak{g})$ be a LeibDer pair and $(M, \phi_M)$ be a representation of it. Note that $(M, \phi_M)$ can be considered as a LeibDer pair with the trivial Leibniz bracket on $M$.

\begin{defn}
An abelian extension of $(\mathfrak{g}, \phi_\mathfrak{g})$ by $(M, \phi_M)$ is a short exact sequence of LeibDer pairs
\begin{align}\label{abel-extns}
\xymatrix{
0 \ar[r] & (M, \phi_M ) \ar[r]^i & (\mathfrak{h}, \phi_\mathfrak{h}) \ar[r]^p & ( \mathfrak{g}, \phi_\mathfrak{g}) \ar[r] & 0
}
\end{align}
such that the sequence is split over $\mathbb{K}$ (suppose the splitting is given by a section $s : \mathfrak{g} \rightarrow \mathfrak{h}$ satisfying $p \circ s = \mathrm{id}_\mathfrak{g}$) and the given representation of $(\mathfrak{g}, \phi_\mathfrak{g})$ on $(M, \phi_M)$ is induced from the above extension by $[x, m] = [ s(x), i(m)]$ and $[m, x] = [ i(m), s(x)]$, for $x \in \mathfrak{g}, m \in M$.
\end{defn}

Note that the above induced representation of $(\mathfrak{g}, \phi_\mathfrak{g})$ on $(M, \phi_M)$  does not depend on the choice of the section $s$.

Two abelian extensions $(\mathfrak{h}, \phi_\mathfrak{h}) $ and $(\mathfrak{h}', \phi_{\mathfrak{h}'}) $ are said to be equivalent if there is a LeibDer morphism $\Psi : (\mathfrak{h}, \phi_\mathfrak{h}) \rightarrow (\mathfrak{h}', \phi_{\mathfrak{h}'})$ making the following diagram commutative
\begin{align*}
\xymatrix{
0 \ar[r] & (M, \phi_M ) \ar@{=}[d] \ar[r]^i & (\mathfrak{h}, \phi_\mathfrak{h}) \ar[d]_\Psi \ar[r]^p & ( \mathfrak{g}, \phi_\mathfrak{g}) \ar@{=}[d] \ar[r] & 0 \\
0 \ar[r] & (M, \phi_M ) \ar[r]_{i'} & (\mathfrak{h}', \phi_{\mathfrak{h}'}) \ar[r]_{p'} & ( \mathfrak{g}, \phi_\mathfrak{g}) \ar[r] & 0 .
}
\end{align*}

We denote by $Ext (\mathfrak{g}, M)$ the set of isomorphism classes of abelian extensions of $(\mathfrak{g}, \phi_\mathfrak{g})$ by $(M, \phi_M)$. Then we have the following.

\begin{thm}\label{final-theorem-abelian}
There is a bijection $Ext (\mathfrak{g}, M) \cong H^2_{\mathrm{LeibDer}} (\mathfrak{g}, M).$
\end{thm}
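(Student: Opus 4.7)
The plan is to mimic the proof of Theorem \ref{final-theorem-central}, only now the representation $(M,\phi_M)$ need not be trivial, so the bracket and derivation on the extension $\mathfrak{h}$ will produce both a genuine Leibniz $2$-cocycle (with nontrivial coefficients) and a compatible correction term. Concretely, given an abelian extension (\ref{abel-extns}) with section $s:\mathfrak{g}\to\mathfrak{h}$, I would define $\psi:\mathfrak{g}^{\otimes 2}\to M$ and $\chi:\mathfrak{g}\to M$ by
\begin{align*}
\psi(x,y):=[s(x),s(y)]-s[x,y],\qquad \chi(x):=\phi_\mathfrak{h}(s(x))-s(\phi_\mathfrak{g}(x)),
\end{align*}
noting that both land in $i(M)$ because $p\circ s=\mathrm{id}_\mathfrak{g}$ and $\phi_\mathfrak{h}\circ s-s\circ\phi_\mathfrak{g}$ is killed by $p$. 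Under the identification $\mathfrak{h}\cong \mathfrak{g}\oplus M$ via $s$, the transported bracket and linear map read
\begin{align*}
[(x,m),(y,n)]_\psi=([x,y],\,[x,n]+[m,y]+\psi(x,y)),\qquad \phi_{\chi}(x,m)=(\phi_\mathfrak{g}(x),\,\phi_M(m)+\chi(x)),
\end{align*}
where the actions of $\mathfrak{g}$ on $M$ are the ones induced by the extension.

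The first main step is to show that $(\psi,\chi)\in C^2_{\mathrm{LeibDer}}(\mathfrak{g},M)$ is a $2$-cocycle. The Leibniz identity on $\mathfrak{h}$ for triples of elements of $\mathfrak{g}$, projected to $M$, gives $\delta_L\psi=0$, using the identities (MLL), (LML), (LLM) from Definition \ref{defn-leib-repn}. The derivation property of $\phi_\mathfrak{h}$ on $[s(x),s(y)]$ together with identities (\ref{rep-id-1})--(\ref{rep-id-2}) produces $\delta_L\chi+\delta\psi=0$, so $\partial(\psi,\chi)=0$. Next, I would check that changing the section from $s$ to $s'=s+u$ (with $u:\mathfrak{g}\to M$) changes $(\psi,\chi)$ by the coboundary $\partial u=(\delta_L u,-\delta u)$, and that an equivalence of abelian extensions $\Psi$ produces a section $s'=\Psi\circ s$ of $p'$ satisfying $\Psi|_M=\mathrm{id}_M$, whence the associated cocycles coincide. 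Consequently, the assignment $[(\mathfrak{h},\phi_\mathfrak{h})]\mapsto[(\psi,\chi)]$ is a well-defined map $\mathrm{Ext}(\mathfrak{g},M)\to H^2_{\mathrm{LeibDer}}(\mathfrak{g},M)$.

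For the inverse, given a $2$-cocycle $(\psi,\chi)$ I endow $\mathfrak{h}:=\mathfrak{g}\oplus M$ with the bracket $[~,~]_\psi$ and the linear map $\phi_\chi$ displayed above. The condition $\delta_L\psi=0$ (combined with the axioms of the representation $(M,\phi_M)$) is precisely the Leibniz identity for $[~,~]_\psi$, while $\delta_L\chi+\delta\psi=0$ is exactly the statement that $\phi_\chi$ is a derivation for this bracket; thus $(\mathfrak{h},\phi_\chi)$ is a LeibDer pair, and the obvious maps $M\hookrightarrow\mathfrak{h}\twoheadrightarrow\mathfrak{g}$ form an abelian extension inducing the given representation $(M,\phi_M)$. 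Finally, if $(\psi,\chi)-(\psi',\chi')=\partial v$ for some $v:\mathfrak{g}\to M$, then $\Psi(x,m):=(x,m+v(x))$ is an equivalence of the resulting abelian extensions, as a direct check using $\delta_L v=\psi-\psi'$ and $\delta v=\chi'-\chi$ shows.

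The main obstacle is the bookkeeping of the Leibniz cocycle identity with nontrivial coefficients: unlike the central case of Proposition \ref{2-co-central}, the cross-terms $[x,n]+[m,y]$ appear and must be handled using all three representation axioms (MLL), (LML), (LLM), and similarly the derivation condition for $\phi_\chi$ forces the joint use of (\ref{rep-id-1})--(\ref{rep-id-2}). Once these identifications are made, however, the correspondence $(\psi,\chi)\leftrightarrow (\mathfrak{h},\phi_\mathfrak{h})$ is tautological, and the two directions assemble into mutually inverse bijections between $\mathrm{Ext}(\mathfrak{g},M)$ and $H^2_{\mathrm{LeibDer}}(\mathfrak{g},M)$.
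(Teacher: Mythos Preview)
Your proposal is correct and follows essentially the same approach as the paper: build the bijection by transporting the LeibDer structure along a section $s$ to obtain a $2$-cocycle $(\psi,\chi)$, and conversely twist the semi-direct product $\mathfrak{g}\oplus M$ by a $2$-cocycle to produce an abelian extension, checking that cohomologous cocycles correspond to equivalent extensions via $(x,m)\mapsto(x,m+v(x))$. The only differences are cosmetic---the paper presents the map $H^2_{\mathrm{LeibDer}}(\mathfrak{g},M)\to \mathrm{Ext}(\mathfrak{g},M)$ first and handles section-independence implicitly through the equivalence relation, whereas you start from the extension side and verify section-independence explicitly---but the content and the key computations are the same.
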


\begin{proof}
Let $(f, \overline{f}) \in C^2_{\mathrm{LeibDer}} (\mathfrak{g}, M)$ be a $2$-cocycle. That is, we have $\delta_L f = 0$ and $\delta_L (\overline{f}) + \delta f = 0$. Consider the direct sum $\mathfrak{h} = \mathfrak{g} \oplus M$ with the bracket
\begin{align*}
[(x,m), (y, n)]_\mathfrak{h} = ( [x, y], [x, n] + [m, y] + f (x, y))
\end{align*}
and a linear map $\phi_\mathfrak{h} (x, m) = ( \phi_\mathfrak{g} (x), \phi_M (m) + \overline{f} (x))$. Then $\mathfrak{h}$ with the above bracket is a Leibniz algebra and $\phi_\mathfrak{h}$ is a derivation for it. More generally, $0 \rightarrow (M, \phi_M ) \xrightarrow{i} (\mathfrak{h}, \phi_\mathfrak{h}) \xrightarrow{p} ( \mathfrak{g}, \phi_\mathfrak{g}) \rightarrow 0 $ is an abelian extension, where $i (m) = (0, m)$ and $p (x, m) = x$.

Let $(f, \overline{f}) - \partial h = ( f - \delta_L h, \overline{f} + \delta h)$ be any $2$-cocycle cohomologous to $(f, \overline{f})$. Consider the corresponding abelian extension $(\mathfrak{h}' = \mathfrak{g} \oplus M, [~, ~]', \phi_{\mathfrak{h}'}).$ They are equivalent via the LeibDer pair morphism $\Psi : \mathfrak{h} \rightarrow \mathfrak{h}'$ given by $\Psi (x, m) = (x, m + h (x))$. Hence the map $H^2_{\mathrm{LeibDer}} (\mathfrak{g}, M) \rightarrow Ext (\mathfrak{g} , M)$ is well-defined.

Conversely, given any abelian extension (\ref{abel-extns}) with splitting $s$, the vector space $\mathfrak{h}$ is isomorphic to $\mathfrak{g} \oplus M$ and $s (x) = (x, 0)$. The maps $i$ and $p$ are the obvious ones. Since $p$ is an algebra map, we have $p [ (x, 0), (y, 0)] = [x, y]$. Hence $[(x, 0), (y, 0)] = ( [x, y], f (x, y)),$ for some $f \in \mathrm{Hom}(\mathfrak{g}^{\otimes 2}, M)$. The Leibniz identity for the bracket on $\mathfrak{h}$ is equivalent to $\delta_L (f) = 0$. Moreover, $p$ commute with derivations, i.e, $p \circ \phi_\mathfrak{h} = \phi_\mathfrak{g} \circ p$. Hence, we have $\phi_\mathfrak{h} (x, 0) = ( \phi_\mathfrak{g} (x), \overline{f} (x))$, for some $\overline{f} \in \mathrm{Hom}(\mathfrak{g}, M)$. The map $\phi_\mathfrak{h}$ is a derivation implies that $\delta_L (\overline{f}) + \delta f = 0$. Hence $(f, \overline{f}) \in C^2_{\mathrm{LeibDer}} (\mathfrak{g}, M)$ is a $2$-cocycle.

If $(\mathfrak{h}, \phi_\mathfrak{h})$ and $(\mathfrak{h}', \phi_{\mathfrak{h}'})$ are two equivalent extensions, then one can show that they are equivalent by a map $\mathfrak{h} = \mathfrak{g} \oplus M \xrightarrow{\Psi} \mathfrak{g} \oplus M = \mathfrak{h}'$ given by $(x, m) \mapsto (x, m + h (x))$, for some $h \in \mathrm{Hom}(\mathfrak{g}, M)$. Let $(f', \overline{f'}) \in C^2_{\mathrm{LeibDer}} (\mathfrak{g}, M)$ be the $2$-cocycle induced from the abelian extension $(\mathfrak{h}', \phi_{\mathfrak{h}'})$. Since $\Psi$ is a morphism of LeibDer pairs, one have $(f, \overline{f}) - ( f', \overline{f'}) = \partial h$. Therefore, the map $Ext (\mathfrak{g}, M) \rightarrow H^2_{\mathrm{LeibDer}} (\mathfrak{g}, M)$ is well-defined. Finally, these two maps are inverses to each other. Hence the proof.
\end{proof}

\section{Deformations of LeibDer pairs}\label{sec-deformation}
In this section, we study formal one-parameter deformations of LeibDer pairs in which we deform both the Leibniz bracket and the distinguished derivation. Our main results in this section are similar to the standard cases.

Let $(\mathfrak{g}, \phi_\mathfrak{g})$ be a LeibDer pair. We denote the Leibniz bracket on $\mathfrak{g}$ by $\mu$, i.e, $\mu (x, y) = [x, y],$ for all $x, y \in \mathfrak{g}$. Consider the space $\mathfrak{g}[[t]]$ of formal power series in $t$ with coefficients from $\mathfrak{g}$. Then $\mathfrak{g}[[t]]$ is a $\mathbb{K}[[t]]$-module.

\begin{defn}
A formal one-parameter deformation of $(\mathfrak{g}, \phi_\mathfrak{g})$ consists of two formal power series
\begin{align*}
\mu_t =~& \sum_{i=0}^\infty t^i \mu_i \in \mathrm{Hom} (\mathfrak{g}^{\otimes 2}, \mathfrak{g})[[t]] ~~~~ \text{ with } \mu_0 = \mu,\\
\phi_t =~& \sum_{i=0}^\infty t^i \phi_i \in \mathrm{Hom} (\mathfrak{g}, \mathfrak{g})[[t]] ~~~~ \text{ with } \phi_0 = \phi_\mathfrak{g}
\end{align*}
such that $\mathfrak{g}[[t]]$ together with the bracket $\mu_t$ forms a Leibniz algebra over $\mathbb{K}[[t]]$ and $\phi_t$ is a derivation on this Leibniz algebra. 
\end{defn}

Thus, in a formal deformation as above, the following identities hold: for $n \geq 0$,
\begin{align}
\sum_{i+j = n} \mu_i (\mu_j (x, y), z) =~& \mu_i (\mu_j (x,z), y) + \sum_{i+j = n} \mu_i (x, \mu_j (y,z)), \\
\sum_{i+j = n} \phi_i (\mu_j (x, y)) =~& \sum_{i+j = n} \mu_i ( \phi_j (x), y) + \mu_i ( x, \phi_j (y)), ~~ \text{ for } x, y, z \in \mathfrak{g}.
\end{align}
Both identities hold for $n=0$ as $(\mathfrak{g}, \phi_\mathfrak{g})$ is a LeibDer pair. However, for $n=1$, we obtain
\begin{align}
\mu_1 ([x,y], z) + [ \mu_1 (x,y), z] =~& \mu_1 ([x, z], y) + [ \mu_1 (x, z), y] + \mu_1 (x, [y,z]) + [x, \mu_1 (y, z)], \label{eq-r}\\
\phi_1 ([x, y]) + \phi_\mathfrak{g} (\mu_1 [x, y]) =~& \mu_1 ( \phi_\mathfrak{g} (x), y) + [\phi_1 (x), y] + \mu_1 (x, \phi_\mathfrak{g} (y)) + [x, \phi_1 (y)],  \label{eq-s}
\end{align}
for all $x, y, z \in \mathfrak{g}$.  The identity (\ref{eq-r}) is equivalent to $\delta_L (\mu_1) = 0$ and the identity (\ref{eq-s}) is equivalent to $\delta_L (\phi_1) + \delta \mu_1 = 0$. Here $\delta_L$ is the coboundary operator of the Leibniz algebra cohomology of $\mathfrak{g}$ with coefficients in itself. Thus, we have
\begin{align*}
\partial (\mu_1, \phi_1) = (\delta_L (\mu_1), \delta_L (\phi_1) + \delta \mu_1 ) = 0.
\end{align*}

\begin{prop}\label{inf-2-cocycle}
Let $(\mu_t, \phi_t)$ be a formal one-parameter deformation of the LeibDer pair $(\mathfrak{g}, \phi_\mathfrak{g})$. Then the linear term $(\mu_1, \phi_1)$ is a $2$-cocycle in the cohomology of the LeibDer pair $(\mathfrak{g}, \phi_\mathfrak{g})$ with coefficients in itself.
\end{prop}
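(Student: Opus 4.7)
The plan is to extract the coefficient of $t^1$ from the two defining power series identities of a formal deformation and reinterpret them in terms of the coboundary $\partial$ on $C^2_{\mathrm{LeibDer}}(\mathfrak{g},\mathfrak{g})$.

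First I would write out the Leibniz identity for $\mu_t$ as a formal identity in $t$, and collect the coefficient of $t^1$ in the associator equation $\sum_{i+j=n}\mu_i(\mu_j(x,y),z)=\mu_i(\mu_j(x,z),y)+\mu_i(x,\mu_j(y,z))$ at level $n=1$. This produces the displayed identity \eqref{eq-r}, which upon rearrangement is exactly the statement $(\delta_L\mu_1)(x,y,z)=0$, viewing $\mu_1\in C^2(\mathfrak{g},\mathfrak{g})$ as a $2$-cochain with values in the adjoint representation. Next I would do the same with the derivation condition $\sum_{i+j=n}\phi_i(\mu_j(x,y))=\sum_{i+j=n}\bigl(\mu_i(\phi_j(x),y)+\mu_i(x,\phi_j(y))\bigr)$ at $n=1$, producing identity \eqref{eq-s}. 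The terms involving $\mu_1$ together combine into $(\delta\mu_1)(x,y)$ (using the definition $\delta f=\sum f\circ(\mathrm{id}\otimes\cdots\otimes\phi_\mathfrak{g}\otimes\cdots\otimes\mathrm{id})-\phi_\mathfrak{g}\circ f$ applied to $\mu_1$), while the terms involving $\phi_1$ combine into $(\delta_L\phi_1)(x,y)$, and so \eqref{eq-s} is equivalent to $\delta_L\phi_1+\delta\mu_1=0$.

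Finally, invoking the coboundary formula
\[
\partial(f_n,\overline{f}_n)=\bigl(\delta_L f_n,\ \delta_L\overline{f}_n+(-1)^n\delta f_n\bigr)
\]
with $n=2$, $f_2=\mu_1$, $\overline{f}_2=\phi_1$ gives
\[
\partial(\mu_1,\phi_1)=\bigl(\delta_L\mu_1,\ \delta_L\phi_1+\delta\mu_1\bigr)=(0,0),
\]
which is exactly the $2$-cocycle condition in $C^\ast_{\mathrm{LeibDer}}(\mathfrak{g},\mathfrak{g})$.

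There is no genuine obstacle: the essential content has already been recorded in the paragraph preceding the proposition. The only thing that requires any care is matching signs and indices when identifying the $t^1$-coefficient of the derivation identity with $\delta_L\phi_1+\delta\mu_1$; in particular one should check that the sign $(-1)^n$ for $n=2$ is $+1$, so that $\delta\mu_1$ enters with a plus sign, agreeing with \eqref{eq-s}.
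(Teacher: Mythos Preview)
Your proposal is correct and follows exactly the same approach as the paper: the paper's argument is precisely the paragraph preceding the proposition, which extracts the $t^1$-coefficients of the Leibniz and derivation identities to obtain \eqref{eq-r} and \eqref{eq-s}, identifies them with $\delta_L\mu_1=0$ and $\delta_L\phi_1+\delta\mu_1=0$, and concludes $\partial(\mu_1,\phi_1)=0$. You have reproduced this reasoning faithfully, including the sign check for $(-1)^2=+1$.
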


The term $(\mu_1, \phi_1)$ is called the infinitesimal of the deformation. If $(\mu_1, \phi_1) = \cdots = (\mu_{n-1}, \phi_{n-1}) =0$ and $(\mu_n, \phi_n)$ is non-zero, then $(\mu_n, \phi_n)$ is a $2$-cocycle.

\begin{defn}
Two deformations $(\mu_t, \phi_t)$ and $(\mu_t', \phi_t')$ of a LeibDer pair $(\mathfrak{g}, \phi_\mathfrak{g})$ are said to be equivalent if there exists a formal isomorphism $\psi_t = \sum_{i=0}^\infty t^i \psi_i : \mathfrak{g}[[t]] \rightarrow \mathfrak{g} [[t]]$ with $\psi_0 = \mathrm{id}_\mathfrak{g}$ such that
\begin{align*}
\psi_t \circ \mu_t = \mu_t' \circ (\psi_t \otimes \psi_t) \qquad \text{ and } \qquad \psi_t \circ \phi_t = \phi_t' \circ \psi_t.
\end{align*}
\end{defn}
In other words, $\psi_t$ is an isomorphism of LeibDer pairs from  $(\mathfrak{g}[[t]], \mu_t, \phi_t)$ to  $(\mathfrak{g}[[t]], \mu_t', \phi_t')$.
By equating coefficients of $t^n$, we get
\begin{align*}
\sum_{i+j = n} \psi_i \circ \mu_j = \sum_{i+j+k = n} \mu_i' \circ (\psi_j \otimes \psi_k) \qquad \text{ and } \qquad \sum_{i+j = n} \psi_i \circ \phi_j = \sum_{i+j = n} \phi_i' \circ \psi_j.
\end{align*}
The above identities hold for $n=0$, however, for $n=1$, we obtain
\begin{align*}
\mu_1 + \psi_1 \circ \mu = \mu_1' + \mu \circ (\psi_1 \otimes \mathrm{id}) + \mu \circ (\mathrm{id} \otimes \psi_1) ~~~ \text{ ~~and~~ } ~~~
\phi_1 + \psi_1 \circ \phi_\mathfrak{g} = \phi_1' + \phi_\mathfrak{g} \circ \psi_1.
\end{align*}
These two identities together imply that $(\mu_1, \phi_1) - (\mu_1', \phi_1') = \partial (\psi_1).$ Thus, we have the following.

\begin{prop}
The infinitesimals corresponding to equivalent deformations are cohomologous. Hence, they correspond to the same cohomology class.
\end{prop}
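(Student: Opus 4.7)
The proof proposal is essentially already laid out in the paragraph preceding the statement, so my plan is to organize that computation cleanly and match it against the definition of the coboundary $\partial$ on 1-cochains.

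First, I would recall that $\psi_1 \in \mathrm{Hom}(\mathfrak{g}, \mathfrak{g}) = C^1_{\mathrm{LeibDer}}(\mathfrak{g}, \mathfrak{g})$ and that by the formula for $\partial$ on $C^1_{\mathrm{LeibDer}}$, we have $\partial \psi_1 = (\delta_L \psi_1,\, -\delta \psi_1)$. Unpacking the definitions, $(\delta_L \psi_1)(x,y) = [x, \psi_1(y)] + [\psi_1(x), y] - \psi_1([x,y])$ and $(\delta \psi_1)(x) = \psi_1(\phi_\mathfrak{g}(x)) - \phi_\mathfrak{g}(\psi_1(x))$. So proving the proposition reduces to the two identities
\begin{align*}
\mu_1(x,y) - \mu_1'(x,y) &= [\psi_1(x), y] + [x, \psi_1(y)] - \psi_1([x,y]),\\
\phi_1(x) - \phi_1'(x) &= \phi_\mathfrak{g}(\psi_1(x)) - \psi_1(\phi_\mathfrak{g}(x)).
\end{align*}

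Next I would take the equations $\psi_t \circ \mu_t = \mu_t' \circ (\psi_t \otimes \psi_t)$ and $\psi_t \circ \phi_t = \phi_t' \circ \psi_t$ defining equivalence and expand both sides as power series in $t$. The $t^0$ coefficients are automatic (both reduce to the LeibDer pair axioms for $(\mathfrak{g}, \mu, \phi_\mathfrak{g})$ using $\psi_0 = \mathrm{id}_\mathfrak{g}$). Extracting the coefficient of $t^1$ and using $\mu_0 = \mu$, $\phi_0 = \phi_\mathfrak{g}$, $\psi_0 = \mathrm{id}_\mathfrak{g}$ gives precisely the two displayed identities in the paragraph before the proposition, namely
\begin{align*}
\mu_1 + \psi_1 \circ \mu &= \mu_1' + \mu \circ (\psi_1 \otimes \mathrm{id}) + \mu \circ (\mathrm{id} \otimes \psi_1),\\
\phi_1 + \psi_1 \circ \phi_\mathfrak{g} &= \phi_1' + \phi_\mathfrak{g} \circ \psi_1.
\end{align*}
Rearranging each of these yields exactly the two identities required in the previous paragraph, whence $(\mu_1, \phi_1) - (\mu_1', \phi_1') = \partial \psi_1$.

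Since $\partial \psi_1$ is a coboundary in $C^2_{\mathrm{LeibDer}}(\mathfrak{g}, \mathfrak{g})$, the $2$-cocycles $(\mu_1, \phi_1)$ and $(\mu_1', \phi_1')$ (cocycles by Proposition~\ref{inf-2-cocycle}) are cohomologous and define the same element of $H^2_{\mathrm{LeibDer}}(\mathfrak{g}, \mathfrak{g})$. There is no real obstacle here: the only things to watch are the signs in the definition of $\partial$ on $C^1_{\mathrm{LeibDer}}$ (note the minus sign in front of $\delta$) and the correct form of $\delta$ and $\delta_L$ on 1-cochains; once these are written down, the comparison with the coefficient-of-$t$ identities is immediate.
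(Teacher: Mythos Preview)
Your proposal is correct and follows exactly the paper's approach: the paper performs the same coefficient-of-$t$ extraction in the paragraph immediately preceding the proposition and concludes $(\mu_1,\phi_1)-(\mu_1',\phi_1')=\partial(\psi_1)$, with the proposition stated as an immediate consequence. Your write-up simply makes the matching against $\partial\psi_1=(\delta_L\psi_1,-\delta\psi_1)$ more explicit, which is fine.
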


\subsection{Rigidity}
\begin{defn}
A formal deformation $(\mu_t, \phi_t)$ of the LeibDer pair $(\mathfrak{g}, \phi_\mathfrak{g})$ is said  to be trivial if it is equivalent to the undeformed one $(\mu_t' = \mu, \phi_t' = \phi_\mathfrak{g}).$
\end{defn}

\begin{defn}
A LeibDer pair $(\mathfrak{g}, \phi_\mathfrak{g})$ is said to be rigid if every formal deformation of $(\mathfrak{g}, \phi_\mathfrak{g})$ is trivial.
\end{defn}

\begin{thm}\label{two-rigidity}
Let $(\mathfrak{g}, \phi_\mathfrak{g})$ be a LeibDer pair. If $H^2_{\mathrm{LeibDer}} (\mathfrak{g}, \mathfrak{g}) = 0$ then $(\mathfrak{g}, \phi_\mathfrak{g})$ is rigid.
\end{thm}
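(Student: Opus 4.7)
The plan is a standard iterative killing-of-lowest-term argument, powered by the fact that every $2$-cocycle is a coboundary. Start with an arbitrary formal deformation $(\mu_t,\phi_t)$ of $(\mathfrak{g},\phi_\mathfrak{g})$. I will prove by induction on $n\geq 1$ the statement $P(n)$: $(\mu_t,\phi_t)$ is equivalent to a formal deformation $(\mu_t^{(n)},\phi_t^{(n)})$ whose first $n$ perturbative terms vanish, i.e.\ $\mu_i^{(n)}=0$ and $\phi_i^{(n)}=0$ for $1\leq i\leq n$.

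For the inductive step, suppose $P(n-1)$ holds, so without loss of generality the first possibly nonzero term of $(\mu_t,\phi_t)$ is at order $n$. By the remark following Proposition \ref{inf-2-cocycle}, the pair $(\mu_n,\phi_n)$ is a $2$-cocycle in $C^2_{\mathrm{LeibDer}}(\mathfrak{g},\mathfrak{g})$. Since $H^2_{\mathrm{LeibDer}}(\mathfrak{g},\mathfrak{g})=0$, there exists $\psi_n\in\mathrm{Hom}(\mathfrak{g},\mathfrak{g})=C^1_{\mathrm{LeibDer}}(\mathfrak{g},\mathfrak{g})$ with $\partial\psi_n=(\mu_n,\phi_n)$, that is $\mu_n=\delta_L\psi_n$ and $\phi_n=-\delta\psi_n$. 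Define the formal automorphism
\begin{align*}
\Psi_t:=\mathrm{id}_\mathfrak{g}+t^n\psi_n : \mathfrak{g}[[t]]\rightarrow\mathfrak{g}[[t]],
\end{align*}
which is invertible over $\mathbb{K}[[t]]$ because its constant term is the identity, and transport the structures along $\Psi_t$ by declaring
\begin{align*}
\mu_t'(x,y):=\Psi_t\bigl(\mu_t(\Psi_t^{-1}x,\Psi_t^{-1}y)\bigr),\qquad \phi_t':=\Psi_t\circ\phi_t\circ\Psi_t^{-1}.
\end{align*}
Then $(\mu_t',\phi_t')$ is automatically a formal deformation equivalent to $(\mu_t,\phi_t)$ via $\Psi_t$. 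A direct expansion modulo $t^{n+1}$ shows that the coefficients of $t^n$ in $\mu_t'$ and $\phi_t'$ are, respectively, $\mu_n-\delta_L\psi_n$ and $\phi_n+\delta\psi_n$, both of which vanish by the choice of $\psi_n$. Terms of order $<n$ are unchanged because $\Psi_t\equiv\mathrm{id}\pmod{t^n}$, so $P(n)$ holds.

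Finally, to assemble the inductive equivalences into a single trivialization, observe that the transformation $\Psi_t^{(n)}=\mathrm{id}+t^n\psi_n$ constructed at step $n$ differs from the identity only at order $\geq n$. Hence the infinite composition
\begin{align*}
\Psi_t^\infty:=\cdots\circ\Psi_t^{(3)}\circ\Psi_t^{(2)}\circ\Psi_t^{(1)}
\end{align*}
is well defined in the $t$-adic topology on $\mathrm{Hom}(\mathfrak{g},\mathfrak{g})[[t]]$, since each coefficient of $t^N$ in the composed power series only depends on $\Psi_t^{(1)},\ldots,\Psi_t^{(N)}$. Transporting $(\mu_t,\phi_t)$ along $\Psi_t^\infty$ yields a deformation whose perturbative terms at every order have been killed, namely the trivial deformation $(\mu,\phi_\mathfrak{g})$. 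Thus $(\mu_t,\phi_t)$ is equivalent to the trivial deformation, proving that $(\mathfrak{g},\phi_\mathfrak{g})$ is rigid.

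The only step that requires real care is the expansion establishing $\mu_n'=\mu_n-\delta_L\psi_n$ and $\phi_n'=\phi_n+\delta\psi_n$; this amounts to comparing $\Psi_t\mu_t$ with $\mu_t'(\Psi_t\otimes\Psi_t)$ and $\Psi_t\phi_t$ with $\phi_t'\Psi_t$ modulo $t^{n+1}$ and recognizing the explicit formulas for $\delta_L$ on $C^1$ and for $\delta$ given in the preceding section. The rest is bookkeeping and the $t$-adic convergence of the composed equivalence.
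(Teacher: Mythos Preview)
Your proposal is correct and follows essentially the same approach as the paper: kill the lowest nonzero term by a formal automorphism $\mathrm{id}+t^n\psi_n$ coming from a coboundary, then iterate. You are slightly more careful than the paper in two respects---you make the induction explicit (using $\Psi_t=\mathrm{id}+t^n\psi_n$ rather than restarting at the linear term each time) and you address the $t$-adic convergence of the infinite composition---but these are refinements of presentation, not a different method.
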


\begin{proof}
Let $(\mu_t, \phi_t)$ be any formal deformation of the LeibDer pair $(\mathfrak{g}, \phi_\mathfrak{g})$. Then by Proposition \ref{inf-2-cocycle} the linear term $(\mu_1, \phi_1)$ is a $2$-cocycle. Thus by the hypothesis, there exists a $1$-cochain $\phi_1 \in C^1_{\mathrm{LeibDer}} (\mathfrak{g}, \mathfrak{g}) = \mathrm{Hom} (\mathfrak{g}, \mathfrak{g})$ such that $(\mu_1, \phi_1) = \partial \psi_1$.

We define a formal isomorphism $\psi_t = \mathrm{id}_\mathfrak{g} + t \psi_1 : \mathfrak{g} [[t]] \rightarrow \mathfrak{g} [[t]]$ and setting
\begin{align}\label{rigid-num}
\mu_t' = \psi_t^{-1} \circ \mu_t \circ (\psi_t \otimes \psi_t)  \qquad \qquad \phi_t' = \psi_t^{-1} \circ \phi_t \circ \psi_t.
\end{align}
Then $(\mu_t', \phi_t')$ is a deformation of the LeibDer pair $(\mathfrak{g}, \phi_\mathfrak{g})$  equivalent to the deformation $(\mu_t, \phi_t)$. It follows from (\ref{rigid-num}) that the deformation $(\mu_t', \phi_t')$ is of the form $\mu_t' = \mu + t^2 \mu_2' + \cdots$ and $\phi_t' = \phi_\mathfrak{g} + t^2 \phi_2' + \cdots $. In other words, the linear terms (coefficients of $t$) of $\mu_t'$ and $\phi_t'$ vanish. One can apply the same argument repeatedly to conclude that $(\mu_t, \phi_t)$ is equivalent to $(\mu, \phi_\mathfrak{g}).$
\end{proof}

\subsection{Finite order deformations and their extensions}
In this subsection, we consider finite order deformations of a LeibDer pair $(\mathfrak{g}, \phi_\mathfrak{g})$. Given a deformation of order $N$, we associate a third cohomology class in the cohomology of the LeibDer pair $(\mathfrak{g}, \phi_\mathfrak{g})$ with coefficients in itself. When this cohomology class is trivial, the order $N$ deformation extends to a deformation of order $N+1$.

\begin{defn}
A deformation of order $N$ of a LeibDer pair $(\mathfrak{g}, \phi_\mathfrak{g})$ consist of finite sums $\mu_t = \sum_{i=0}^N t^i \mu_i$ and $\phi_t = \sum_{i=0}^N t^i \phi_i$ such that $\mu_t$ defines Leibniz bracket on $\mathfrak{g}[[t]]/ (t^{N+1})$ and $\phi_t$ is a derivation on it.
\end{defn}

Thus, the following identities must hold
\begin{align}
 \sum_{i+j = n} \mu_i (\mu_j (x, y), z) =~& \sum_{i+j = n} \mu_i (\mu_j (x,z), y) +  \mu_i (x, \mu_j (y,z)), \\
\sum_{i+j = n} \phi_i (\mu_j (x, y)) =~& \sum_{i+j = n} \mu_i ( \phi_j (x), y) + \mu_i ( x, \phi_j (y)),
\end{align}
for $n=0, 1, \ldots, N$. These identities are equivalent to
\begin{align}
\llbracket \mu, \mu_n \rrbracket =~& -\frac{1}{2} \sum_{i+j=n, i, j > 0} \llbracket \mu_i, \mu_j \rrbracket  \label{n-def-eqn-1}\\
~~~~ \text{ and }~~~ - \llbracket \phi_\mathfrak{g}, \mu_n \rrbracket + \llbracket \mu, \phi_n \rrbracket =~& \sum_{i+j = n, i, j> 0} \llbracket \phi_i, \mu_j \rrbracket. \label{n-def-eqn-2}
\end{align}

\begin{defn}
A deformation $( \mu_t = \sum_{i=0}^N t^i \mu_i, ~\phi_t = \sum_{i=0}^N t^i \phi_i)$ of order $N$ is said to be extensible if there is an element $(\mu_{N+1}, \phi_{N+1}) \in C^2_{\mathrm{LeibDer}} (\mathfrak{g}, \mathfrak{g})$ such that $(\mu_t' = \mu_t + t^{N+1} \mu_{N+1}, ~\phi_t' = \phi_t + t^{N+1}  \phi_{N+1} )$ is a deformation of order $N+1$.
\end{defn}
Thus, two more equations need to be satisfied, namely,
\begin{align*}
 \sum_{i+j = N+1} \mu_i (\mu_j (x, y), z) =~& \sum_{i+j = N+1} \mu_i (\mu_j (x,z), y) +  \mu_i (x, \mu_j (y,z)), \\
\sum_{i+j = N+1} \phi_i (\mu_j (x, y)) =~& \sum_{i+j = N+1} \mu_i ( \phi_j (x), y) + \mu_i ( x, \phi_j (y)).
\end{align*}
The above two equations can be equivalently written as
\begin{align}
\delta_L (\mu_{N+1}) (x, y, x) =~& \frac{1}{2} \sum_{i+j = N+1, i, j > 0} \llbracket \mu_i, \mu_j \rrbracket \quad ( = \mathrm{Ob}^3(x, y, z) ~~~ \text{ say}) \label{eq-el}\\
\delta_L (\phi_{N+1}) + \delta (\mu_{N+1}) (x, y) =~& \sum_{i+j = N+1, i, j > 0} \llbracket \phi_i, \mu_j \rrbracket  \quad ( = \mathrm{Ob}^2(x, y) ~~~ \text{ say}) \label{eq-em}
\end{align}

\begin{prop}\label{ob-three-cocycle}
The pair $( \mathrm{Ob}^3, \mathrm{Ob}^2)  \in C^3_{\mathrm{LeibDer}} (\mathfrak{g}, \mathfrak{g}) $ is a $3$-cocycle in the cohomology of the LeibDer pair $(\mathfrak{g}, \phi_\mathfrak{g})$ with coefficients in itself. 
\end{prop}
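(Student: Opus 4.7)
The strategy is to lift the calculation into the graded Lie algebra $(C^{\ast+1}_{\mathrm{LeibDer}}(\mathfrak{g},\mathfrak{g}),\llbracket\cdot,\cdot\rrbracket^{\thicksim})$ already established, and exploit the fact (noted just before the proposition) that $\partial(f,\overline{f})=(-1)^{n-1}\llbracket(\mu,\phi_\mathfrak{g}),(f,\overline{f})\rrbracket^{\thicksim}$. Write $\alpha:=(\mu,\phi_\mathfrak{g})$ and $\alpha_i:=(\mu_i,\phi_i)\in C^2_{\mathrm{LeibDer}}(\mathfrak{g},\mathfrak{g})$, and consider the formal Maurer--Cartan candidate $\alpha_t=\alpha+t\alpha_1+\cdots+t^N\alpha_N$.

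First I would unpack the deformation conditions in this compact notation. A direct application of the formula for $\llbracket(f,\overline{f}),(g,\overline{g})\rrbracket^{\thicksim}$ (plus graded antisymmetry of $\llbracket\cdot,\cdot\rrbracket$) shows that the system (\ref{n-def-eqn-1})--(\ref{n-def-eqn-2}) is equivalent, for each $n=1,\dots,N$, to the single identity
\begin{align*}
\llbracket\alpha,\alpha_n\rrbracket^{\thicksim}\;=\;-\tfrac{1}{2}\sum_{\substack{i+j=n\\ i,j\geq 1}}\llbracket\alpha_i,\alpha_j\rrbracket^{\thicksim},
\end{align*}
and that the obstruction cochain is precisely
\begin{align*}
(\mathrm{Ob}^3,\mathrm{Ob}^2)\;=\;\tfrac{1}{2}\sum_{\substack{i+j=N+1\\ i,j\geq 1}}\llbracket\alpha_i,\alpha_j\rrbracket^{\thicksim}.
\end{align*}
In other words, $\alpha_t$ satisfies the Maurer--Cartan equation modulo $t^{N+1}$, and the coefficient of $t^{N+1}$ in $\tfrac{1}{2}\llbracket\alpha_t,\alpha_t\rrbracket^{\thicksim}$ is exactly $(\mathrm{Ob}^3,\mathrm{Ob}^2)$.

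Next I would apply the graded Jacobi identity. Since each $\alpha_i$ has odd degree $1$ in the shifted grading, Jacobi implies the formal identity $\llbracket\alpha_t,\llbracket\alpha_t,\alpha_t\rrbracket^{\thicksim}\rrbracket^{\thicksim}=0$. Extracting the coefficient of $t^{N+1}$ gives
\begin{align*}
0\;=\;\sum_{\substack{i+j+k=N+1\\ 0\leq i,j,k\leq N}}\llbracket\alpha_i,\llbracket\alpha_j,\alpha_k\rrbracket^{\thicksim}\rrbracket^{\thicksim}.
\end{align*}
I would split the triple sum according to how many of $i,j,k$ equal $0$. Triples with all three indices positive form a sum $T$; triples with exactly one zero index fall into three cases that, by the super-symmetry of $\llbracket\cdot,\cdot\rrbracket^{\thicksim}$ on odd elements, contribute $2\llbracket\alpha,(\mathrm{Ob}^3,\mathrm{Ob}^2)\rrbracket^{\thicksim}+2A$, where $A=\sum_{i+j=N+1,\,i,j\geq 1}\llbracket\alpha_i,\llbracket\alpha,\alpha_j\rrbracket^{\thicksim}\rrbracket^{\thicksim}$. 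Substituting the order-$N$ identity from the first step into the inner bracket of $A$ produces $A=-\tfrac{1}{2}T$, so $T$ and $2A$ annihilate, leaving $2\llbracket\alpha,(\mathrm{Ob}^3,\mathrm{Ob}^2)\rrbracket^{\thicksim}=0$. Hence
\begin{align*}
\partial(\mathrm{Ob}^3,\mathrm{Ob}^2)\;=\;(-1)^{3-1}\llbracket\alpha,(\mathrm{Ob}^3,\mathrm{Ob}^2)\rrbracket^{\thicksim}\;=\;0.
\end{align*}

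The main obstacle is the bookkeeping in the decomposition step: correctly counting multiplicities for the three "one-index-zero" cases using the super-symmetry $\llbracket a,b\rrbracket^{\thicksim}=\llbracket b,a\rrbracket^{\thicksim}$ valid for odd $a,b$, and tracking the signs so that the deformation identity plugged into $A$ really produces a factor of $-\tfrac{1}{2}$ that cancels $T$. A brute-force alternative avoiding the Maurer--Cartan repackaging, verifying $\delta_L\mathrm{Ob}^3=0$ and $\delta_L\mathrm{Ob}^2-\delta\mathrm{Ob}^3=0$ separately by rewriting each piece with $\delta_L f=(-1)^{n-1}\llbracket\mu,f\rrbracket$ and invoking Jacobi for $\mu$ together with the commutativity lemma $\delta\circ\delta_L=\delta_L\circ\delta$, is available but considerably heavier.
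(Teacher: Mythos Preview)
Your argument is correct and genuinely different from the paper's. The paper works component by component with the \emph{un-tilde} bracket $\llbracket\cdot,\cdot\rrbracket$ on $C^\ast(\mathfrak{g},\mathfrak{g})$: it first shows $\delta_L\mathrm{Ob}^3=\llbracket\mu,\mathrm{Ob}^3\rrbracket=0$ by Jacobi for $\llbracket\cdot,\cdot\rrbracket$ together with (\ref{n-def-eqn-1}), and then in a second, longer computation shows $\delta_L\mathrm{Ob}^2-\delta\mathrm{Ob}^3=-\llbracket\mu,\mathrm{Ob}^2\rrbracket+\llbracket\phi_\mathfrak{g},\mathrm{Ob}^3\rrbracket=0$ using both (\ref{n-def-eqn-1}) and (\ref{n-def-eqn-2}). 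In other words, the paper takes exactly the ``brute-force alternative'' you mention at the end. Your route instead packages $(\mu_i,\phi_i)$ into single odd elements $\alpha_i$ in $(C^{\ast+1}_{\mathrm{LeibDer}}(\mathfrak{g},\mathfrak{g}),\llbracket\cdot,\cdot\rrbracket^{\thicksim})$, recognises the deformation equations and the obstruction as a truncated Maurer--Cartan equation and its next coefficient, and then runs a single Jacobi argument on $\llbracket\alpha_t,\llbracket\alpha_t,\alpha_t\rrbracket^{\thicksim}\rrbracket^{\thicksim}=0$. Your approach is shorter, more conceptual, and transparently the standard obstruction argument valid in any graded Lie algebra; the paper's approach is more hands-on and does not require the reader to verify the repackaging identities (that $(\mathrm{Ob}^3,\mathrm{Ob}^2)=\tfrac12\sum\llbracket\alpha_i,\alpha_j\rrbracket^{\thicksim}$ and that (\ref{n-def-eqn-1})--(\ref{n-def-eqn-2}) combine into $\llbracket\alpha,\alpha_n\rrbracket^{\thicksim}=-\tfrac12\sum\llbracket\alpha_i,\alpha_j\rrbracket^{\thicksim}$), which you should spell out in a final write-up.
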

\begin{proof}
To prove that $\partial ( \mathrm{Ob}^3, \mathrm{Ob}^2 ) = 0$, it is enough to show that $\delta_L (\mathrm{Ob}^3) = 0$ and $\delta_L (\mathrm{Ob}^2) + (-1)^3 \delta (\mathrm{Ob}^3) = 0$. We have
\begin{align*}
\delta_L (\mathrm{Ob}^3 ) =~& \llbracket \mu, \mathrm{Ob}^3 \rrbracket \\
=~& \frac{1}{2} \sum_{i+j = N+1, i, j > 0} \llbracket \mu , \llbracket \mu_i, \mu_j \rrbracket \rrbracket \\
=~&\frac{1}{2} \sum_{i+j = N+1, i, j > 0} (\llbracket  \llbracket \mu, \mu_i \rrbracket , \mu_j \rrbracket  - \llbracket \mu_i, \llbracket \mu, \mu_j \rrbracket \rrbracket) \\
=~& - \frac{1}{4} \sum_{i' + i'' + j = N+1, i', i'' , j > 0}  \llbracket  \llbracket \mu_{i'}, \mu_{i''} \rrbracket, \mu_j \rrbracket + \frac{1}{4} \sum_{i + j' + j'' = N+1, i, j' , j'' > 0} \llbracket \mu_i , \llbracket \mu_{j'}, \mu_{j''} \rrbracket \rrbracket ~~~ (\text{by } (\ref{n-def-eqn-1}))\\
=~& \frac{1}{4} \sum_{i' + i'' + j = N+1, i', i'' , j > 0} \llbracket \mu_j, \llbracket \mu_{i'} , \mu_{i''} \rrbracket \rrbracket + \frac{1}{4} \sum_{i + j' + j'' = N+1, i, j' , j'' > 0} \llbracket \mu_i , \llbracket \mu_{j'}, \mu_{j''} \rrbracket \rrbracket \\
=~& \frac{1}{2} \sum_{i' + i'' + j = N+1, i', i'' , j > 0} \llbracket \mu_j, \llbracket \mu_{i'} , \mu_{i''} \rrbracket \rrbracket = 0.
\end{align*}
To prove the second part, we observe that
\begin{align*}
\begin{split}
&\delta_{L} (\mathrm{Ob}^2) + (-1)^3~ \delta (\mathrm{Ob}^3) 
= - \llbracket \mu, \mathrm{Ob}^2 \rrbracket + \llbracket \phi_\mathfrak{g}, \mathrm{Ob}^3 \rrbracket \\
&= - \sum_{i+j = N+1, i, j > 0} \llbracket \mu, \llbracket \phi_i, \mu_j \rrbracket \rrbracket + \frac{1}{2} \sum_{i+j = N+1, i, j > 0} \llbracket \phi_\mathfrak{g}, \llbracket \mu_i, \mu_j \rrbracket \llbracket \\
&= - \sum_{i+j = N+1, i, j > 0} \big(  \llbracket \llbracket \mu, \phi_i \rrbracket, \mu_j \rrbracket+ \llbracket \phi_i, \llbracket \mu, \mu_j \rrbracket \rrbracket    \big) + \frac{1}{2} \sum_{i+j = N+1, i, j > 0} \big(  \llbracket \llbracket \phi_\mathfrak{g}, \mu_i \rrbracket, \mu_j \rrbracket +  \llbracket \mu_i, \llbracket \phi_\mathfrak{g}, \mu_j \rrbracket \rrbracket  \big) \\
&= - \sum_{i+j = N+1, i, j > 0} \big(  \llbracket \llbracket \mu, \phi_i \rrbracket, \mu_j \rrbracket + \llbracket \phi_i, \llbracket \mu, \mu_j \rrbracket \rrbracket   \big) + \sum_{i+j = N+1, i, j > 0} \llbracket \llbracket \phi_\mathfrak{g}, \mu_i \rrbracket, \mu_j \rrbracket \\
&= - \sum_{\begin{array}{c}
{i+j = N+1},\\{ i, j > 0}\end{array}
} \big(  \llbracket \llbracket \mu, \phi_i \rrbracket, \mu_j \rrbracket      - \llbracket \llbracket \phi_\mathfrak{g}, \mu_i \rrbracket, \mu_j \rrbracket          ) + \frac{1}{2} \sum_{\begin{array}{c}
{i+ j' + j'' = N+1},\\ {i, j', j'' > 0}\end{array}
}  \llbracket \phi_i, \llbracket \mu_j', \mu_j'' \rrbracket \rrbracket \quad (\text{by } (\ref{n-def-eqn-1})) \\
&= - \sum_{i'+ i'' + j = N+1, i', i'', j > 0} \llbracket \llbracket \phi_{i'}, \mu_{i''} \rrbracket, \mu_j \rrbracket - \sum_{i+j = N+1, i, j >0} ( \llbracket \llbracket \phi_\mathfrak{g}, \mu_i \rrbracket, \mu_j \rrbracket - \llbracket \llbracket \phi_\mathfrak{g}, \mu_i \rrbracket , \mu_j \rrbracket ) \\
&\qquad + \frac{1}{2} \sum_{i+ j' + j'' = N+1, i, j', j'' >0} ( \llbracket \llbracket \phi_i, \mu_{j'} \rrbracket, \mu_{j''} \rrbracket + \llbracket \mu_{j'}, \llbracket \phi_i, \mu_{j''} \rrbracket \rrbracket )   \quad (\text{by } (\ref{n-def-eqn-2}))\\
&= - \sum_{\begin{array}{c}
{i' + i'' + j = N+1},\\{ i', i'', j > 0}\end{array}
} \llbracket \llbracket \phi_{i'}, \mu_{i''} \rrbracket, \mu_j \rrbracket+ \sum_{\begin{array}{c}
{i+j' + j'' = N+1},\\ { i, j', j'' > 0}\end{array}
} \llbracket \llbracket  \phi_i, \mu_{j'}\rrbracket, \mu_{j''} \rrbracket = 0.
\end{split}
\end{align*}
Hence the proof.
\end{proof}

Note that the right hand sides of (\ref{eq-el}) and (\ref{eq-em}) does not involve $\mu_{N+1}$ or $\phi_{N+1}$. Therefore, the cohomology class $[( \mathrm{Ob}^3, \mathrm{Ob}^2)]  \in H^3_{\mathrm{LeibDer}} (\mathfrak{g}, \mathfrak{g}) $ is basically obtained from the order $N$ deformation $(\mu_t, \phi_t)$. This class is called the obstruction class to extend the deformation. If this class is vanishes, i.e, $( \mathrm{Ob}^3, \mathrm{Ob}^2)$ is given by a coboundary, then
\begin{align*}
( \mathrm{Ob}^3, \mathrm{Ob}^2) = \partial (\mu_{N+1}, \phi_{N+1}) = ( \delta_L (\mu_{N+1}), \delta_L (\phi_{N+1}) + \delta (\mu_{N+1})),
\end{align*}
for some $(\mu_{N+1}, \phi_{N+1}) \in C^2_{\mathrm{LeibDer}} (\mathfrak{g}, \mathfrak{g}).$ Then it follows from the above observation that $(\mu_t' = \mu_t + t^{N+1} \mu_{N+1}, \phi_t' = \phi_t + t^{N+1} \phi_{N+1})$ is a deformation of order $N+1$. In other words, $(\mu_t, \phi_t)$ is extensible. On the other hand, if $(\mu_t, \phi_t)$ is extensible then $( \mathrm{Ob}^3, \mathrm{Ob}^2)$ is given by the coboundary $\partial (\mu_{N+1}, \phi_{N+1}).$ Hence the corresponding obstruction class vanishes.

\begin{thm}\label{ob-three-cocycle-thm}
A deformation  $(\mu_t, \phi_t)$ of order $N$ is extensible if and only if the obstruction class $[( \mathrm{Ob}^3, \mathrm{Ob}^2)] \in H^3_{\mathrm{LeibDer}} (\mathfrak{g}, \mathfrak{g})$  is vanishes.
\end{thm}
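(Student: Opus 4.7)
The plan is to recognize that the two equations (\ref{eq-el}) and (\ref{eq-em}) that the pair $(\mu_{N+1}, \phi_{N+1})$ must satisfy in order to extend the given order $N$ deformation are, read in the opposite direction, precisely the equation $\partial(\mu_{N+1}, \phi_{N+1}) = (\mathrm{Ob}^3, \mathrm{Ob}^2)$. Since Proposition \ref{ob-three-cocycle} already shows that $(\mathrm{Ob}^3, \mathrm{Ob}^2)$ is a $3$-cocycle, the theorem reduces to the statement: such a preimage under $\partial$ exists if and only if the cohomology class vanishes, which is tautological.

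More concretely, I would first suppose that the deformation is extensible. By definition this yields $(\mu_{N+1}, \phi_{N+1}) \in C^2_{\mathrm{LeibDer}}(\mathfrak{g}, \mathfrak{g})$ such that $\mu_t + t^{N+1}\mu_{N+1}$ is a Leibniz bracket on $\mathfrak{g}[[t]]/(t^{N+2})$ and $\phi_t + t^{N+1}\phi_{N+1}$ is a derivation for it. Equating the coefficients of $t^{N+1}$ in the Leibniz identity and the derivation identity, and isolating the terms containing $\mu_{N+1}$ or $\phi_{N+1}$ on the left, yields exactly (\ref{eq-el}) and (\ref{eq-em}), i.e.\ $\delta_L(\mu_{N+1}) = \mathrm{Ob}^3$ and $\delta_L(\phi_{N+1}) + \delta(\mu_{N+1}) = \mathrm{Ob}^2$. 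By the definition of the coboundary operator $\partial$ on $C^2_{\mathrm{LeibDer}}$, this is the single equation $\partial(\mu_{N+1}, \phi_{N+1}) = (\mathrm{Ob}^3, \mathrm{Ob}^2)$, so $(\mathrm{Ob}^3, \mathrm{Ob}^2)$ is a coboundary and $[(\mathrm{Ob}^3, \mathrm{Ob}^2)] = 0$ in $H^3_{\mathrm{LeibDer}}(\mathfrak{g}, \mathfrak{g})$.

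For the converse, assume $[(\mathrm{Ob}^3, \mathrm{Ob}^2)] = 0$. Then there exists $(\mu_{N+1}, \phi_{N+1}) \in C^2_{\mathrm{LeibDer}}(\mathfrak{g}, \mathfrak{g})$ with $\partial(\mu_{N+1}, \phi_{N+1}) = (\mathrm{Ob}^3, \mathrm{Ob}^2)$, which componentwise recovers equations (\ref{eq-el}) and (\ref{eq-em}). Adding these to the order $N$ relations already satisfied by $(\mu_t, \phi_t)$ shows that $(\mu_t + t^{N+1}\mu_{N+1}, \phi_t + t^{N+1}\phi_{N+1})$ satisfies the Leibniz identity and the derivation condition modulo $t^{N+2}$, hence is a deformation of order $N+1$. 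Thus $(\mu_t, \phi_t)$ is extensible.

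The only non-routine point is making sure the isolation of terms in the $t^{N+1}$-coefficients genuinely produces the brackets $\llbracket \mu_i, \mu_j \rrbracket$ and $\llbracket \phi_i, \mu_j \rrbracket$ with the correct signs and factors of $\tfrac{1}{2}$ that appear on the right-hand sides of (\ref{eq-el}) and (\ref{eq-em}); but this is exactly the same bookkeeping that was already carried out in the paragraph preceding those equations, so no new computation is required. The main conceptual obstacle is only the clean identification $\partial(\mu_{N+1}, \phi_{N+1}) = (\mathrm{Ob}^3, \mathrm{Ob}^2)$, after which the theorem follows immediately from the definition of a cohomology class being zero.
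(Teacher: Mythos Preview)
Your proposal is correct and is essentially the same argument as the paper's own proof, which appears in the paragraph immediately preceding the theorem statement: the paper observes that equations (\ref{eq-el}) and (\ref{eq-em}) are precisely the componentwise form of $\partial(\mu_{N+1}, \phi_{N+1}) = (\mathrm{Ob}^3, \mathrm{Ob}^2)$, and then both directions follow tautologically from the definition of the cohomology class being trivial. Your write-up is slightly more explicit in separating the two directions, but the content is identical.
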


\begin{thm}
If $H^3_{\mathrm{LeibDer}} (\mathfrak{g}, \mathfrak{g}) = 0$ then every finite order deformation of $(\mathfrak{g}, \phi_\mathfrak{g})$ is extensible.
\end{thm}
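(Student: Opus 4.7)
The statement is an immediate corollary of Theorem \ref{ob-three-cocycle-thm}. The plan is to take any finite order deformation $(\mu_t, \phi_t)$ of order $N$, recall that Proposition \ref{ob-three-cocycle} attaches to it a $3$-cocycle $(\mathrm{Ob}^3, \mathrm{Ob}^2) \in C^3_{\mathrm{LeibDer}}(\mathfrak{g}, \mathfrak{g})$ whose cohomology class in $H^3_{\mathrm{LeibDer}}(\mathfrak{g}, \mathfrak{g})$ is the obstruction to extending the deformation to one of order $N+1$, and then observe that under the hypothesis $H^3_{\mathrm{LeibDer}}(\mathfrak{g}, \mathfrak{g}) = 0$ every such class is automatically trivial.

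More concretely, I would argue as follows. Let $(\mu_t = \sum_{i=0}^N t^i \mu_i, \phi_t = \sum_{i=0}^N t^i \phi_i)$ be an arbitrary deformation of order $N$. By Proposition \ref{ob-three-cocycle}, the pair $(\mathrm{Ob}^3, \mathrm{Ob}^2)$ defined by
\[
\mathrm{Ob}^3 = \tfrac{1}{2} \sum_{\substack{i+j = N+1\\ i,j>0}} \llbracket \mu_i, \mu_j \rrbracket, \qquad \mathrm{Ob}^2 = \sum_{\substack{i+j = N+1\\ i,j>0}} \llbracket \phi_i, \mu_j \rrbracket,
\]
is a $3$-cocycle in $C^3_{\mathrm{LeibDer}}(\mathfrak{g}, \mathfrak{g})$. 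Since $H^3_{\mathrm{LeibDer}}(\mathfrak{g}, \mathfrak{g}) = 0$ by hypothesis, the class $[(\mathrm{Ob}^3, \mathrm{Ob}^2)]$ is trivial, so Theorem \ref{ob-three-cocycle-thm} yields an element $(\mu_{N+1}, \phi_{N+1}) \in C^2_{\mathrm{LeibDer}}(\mathfrak{g}, \mathfrak{g})$ such that $(\mu_t + t^{N+1}\mu_{N+1}, \phi_t + t^{N+1} \phi_{N+1})$ is a deformation of order $N+1$. Hence $(\mu_t, \phi_t)$ is extensible.

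Since there is no real obstacle beyond quoting the previous theorem, the only thing to be careful about is the quantifier structure: the statement must hold for every finite order deformation and for every $N$, but this is automatic since the obstruction class lives in the same group $H^3_{\mathrm{LeibDer}}(\mathfrak{g}, \mathfrak{g})$ independent of $N$ and of the chosen deformation. No further calculation is required.
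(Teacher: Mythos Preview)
Your proposal is correct and matches the paper's approach: the paper states this theorem immediately after Theorem \ref{ob-three-cocycle-thm} without supplying a separate proof, treating it as the evident consequence you describe. Your argument—invoking Proposition \ref{ob-three-cocycle} for the obstruction cocycle and then Theorem \ref{ob-three-cocycle-thm} together with $H^3_{\mathrm{LeibDer}}(\mathfrak{g},\mathfrak{g})=0$—is exactly what is intended.
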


\begin{corollary}
If $H^3_{\mathrm{LeibDer}} (\mathfrak{g}, \mathfrak{g}) = 0$ then every $2$-cocycle in the cohomology of the LeibDer pair $(\mathfrak{g}, \phi_\mathfrak{g})$ with coefficients in itself is the infinitesimal of a formal deformation of $(\mathfrak{g}, \phi_\mathfrak{g}).$
\end{corollary}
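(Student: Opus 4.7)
The plan is to bootstrap a formal deformation out of a given $2$-cocycle by inductively applying the extensibility criterion of Theorem \ref{ob-three-cocycle-thm}.

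Let $(\mu_1, \phi_1) \in C^2_{\mathrm{LeibDer}}(\mathfrak{g}, \mathfrak{g})$ be any $2$-cocycle, so $\delta_L(\mu_1) = 0$ and $\delta_L(\phi_1) + \delta\mu_1 = 0$. First I would set $\mu_t = \mu + t\mu_1$ and $\phi_t = \phi_\mathfrak{g} + t\phi_1$ and check that this defines a deformation of order $1$. The two order-$1$ conditions are precisely equations (\ref{eq-r}) and (\ref{eq-s}), which translate to $\delta_L(\mu_1) = 0$ and $\delta_L(\phi_1) + \delta\mu_1 = 0$; these are exactly the cocycle conditions. So the base case holds for free.

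Next I would run the inductive step. Suppose we have constructed a deformation of order $N \geq 1$, namely $\mu_t = \mu + t\mu_1 + \cdots + t^N \mu_N$ and $\phi_t = \phi_\mathfrak{g} + t\phi_1 + \cdots + t^N \phi_N$ extending the original infinitesimal. By Proposition \ref{ob-three-cocycle}, the obstruction pair $(\mathrm{Ob}^3, \mathrm{Ob}^2)$ associated to this order-$N$ deformation is a $3$-cocycle, so its cohomology class lies in $H^3_{\mathrm{LeibDer}}(\mathfrak{g}, \mathfrak{g})$. Under the hypothesis $H^3_{\mathrm{LeibDer}}(\mathfrak{g}, \mathfrak{g}) = 0$, this obstruction class vanishes, and Theorem \ref{ob-three-cocycle-thm} (which is the preceding theorem) guarantees that the deformation extends to order $N+1$. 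Iterating this, one obtains coefficients $(\mu_n, \phi_n)$ for every $n \geq 1$, and assembling them into the formal power series $\mu_t = \sum_{i \geq 0} t^i \mu_i$ and $\phi_t = \sum_{i \geq 0} t^i \phi_i$ yields a formal one-parameter deformation of $(\mathfrak{g}, \phi_\mathfrak{g})$ whose infinitesimal is the chosen $(\mu_1, \phi_1)$.

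The argument is essentially just bookkeeping built on top of the previous theorem; there is no real obstacle beyond observing that the base case is governed by exactly the cocycle condition and that the inductive step is the content of Theorem \ref{ob-three-cocycle-thm}. The mildest point to take care of is the order-$1$ check, where one must verify that the equations (\ref{n-def-eqn-1}) and (\ref{n-def-eqn-2}) for $n=1$ reduce to $\delta_L(\mu_1) = 0$ and $\delta_L(\phi_1) + \delta\mu_1 = 0$ respectively, so that a bare $2$-cocycle suffices to start the induction.
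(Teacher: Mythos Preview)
Your proposal is correct and follows exactly the implicit argument the paper intends: the corollary is stated without proof, immediately after the theorem that every finite order deformation is extensible when $H^3_{\mathrm{LeibDer}}(\mathfrak{g},\mathfrak{g})=0$, so the reader is expected to supply precisely the induction you wrote---start with the $2$-cocycle as an order-$1$ deformation and extend step by step via Theorem \ref{ob-three-cocycle-thm}.
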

\section{Homotopy derivations on sh Leibniz algebras}\label{sec-homo-der}
The notion of sh Leibniz algebras was introduced in \cite{ammar-poncin}. Here we will mostly emphasis on sh Leibniz algebras whose underlying graded vector space is concentrated only in two degrees, namely $0$ and $1$. Such sh Leibniz algebras are called $2$-term sh Leibniz algebras. We define homotopy derivations on $2$-term sh Leibniz algebras. Finally, we classify skeletal and strict $2$-term sh
Leibniz algebras with homotopy derivations.

\begin{defn}
A $2$-term sh Leibniz algebra consists of a chain complex $A_1 \xrightarrow{d} A_0$ together with bilinear maps $l_2 : A_i \times A_j \rightarrow A_{i+j}$, for $ 0 \leq i, j, i+j \leq 1$ and a trilinear map $l_3 : A_0 \times A_0 \times A_0 \rightarrow A_1$ satisfying the following identities: for $x, y, z, w \in A_0$ and $m, n \in A_1$,
\begin{itemize}
\item $d l_2 (x, m) = l_2 (x, dm), $
\item $d l_2 (m, x) = l_2 (dm, x),$
\item $l_2 (dm, n) = l_2 (m, dn),$
\item $d l_3 (x, y, z) =  l_2 ( l_2 (x, y), z) - l_2 ( l_2 (x, z), y) - l_2 ( x, l_2 (y, z)),$
\item $l_3 (x, y, dm) =  l_2 ( l_2 (x, y), m) - l_2 ( l_2 (x, m), y) - l_2 ( x, l_2 (y, m)),$
\item $ l_3 (x, dm, y) =  l_2 ( l_2 (x, m), y ) - l_2 ( l_2 (x, y), m) - l_2 ( x, l_2 (m, y)),$
\item $ l_3 (dm, x, y) =  l_2 ( l_2 (m, x), y ) - l_2 ( l_2 (m, y), x) - l_2 ( m, l_2 (x, y)),$ 
\item $l_2 (x, l_3 (y,z,w)) + l_2 ( l_3 (x,z,w), y) - l_2 ( l_3 (x, y, w), z) + l_2 ( l_3 (x, y,z), w) \\= l_3 ( l_2 (x,y), z, w) - l_3 ( l_2 (x, z), y, w) + l_3 ( l_2 (x, w), y, z) \\
 - l_3 (x, l_2 (y, z), w) + l_3 (x, l_2 (y, w), z) + l_3 ( x, y, l_2 (z, w)).$
\end{itemize}
\end{defn}
A $2$-term sh Leibniz algebra as above may be denoted by $(A_1 \xrightarrow{d} A_0, l_2, l_3)$. When $A_1 = 0$, one simply get a Leibniz algebra structure on $A_0$ with the bracket given by $l_2 : A_0 \times A_0 \rightarrow A_0$.

A $2$-term sh Leibniz algebra $(A_1 \xrightarrow{d} A_0, l_2, l_3)$ is said to be skeletal if the differential $d = 0.$
Skeletal algebras are in one-to-one correspondence with tuples $(\mathfrak{g}, M , \theta)$ where $\mathfrak{g}$ is a Leibniz algebra, $M$ is a representation of $\mathfrak{g}$ and $\theta \in C^3(\mathfrak{g}, M)$ is a $3$-cocycle in the Leibniz algebra cohomology of $\mathfrak{g}$ with coefficients in $M$ \cite{sheng-liu}.
 More precisely, let $(A_1 \xrightarrow{0} A_0, l_2, l_3)$ be a skeletal algebra. Then $(A_0, l_2)$ is a Leibniz algebra; $A_1$ is a representation of it with left and right actions given by $[x,m]:= l_2 (x, m)$ and $[m,x]:= l_2 (m, x)$, for $x \in A_0, m \in A_1$. Finally, the map $l_3 : A_0 \times A_0 \times A_0 \rightarrow A_1$ is a $3$-cocycle in the cohomology of $A_0$ with coefficients in $A_1$.
 
 \begin{defn}
Let  $(A_1 \xrightarrow{d} A_0, l_2, l_3)$ and  $(A_1' \xrightarrow{d'} A_0', l_2', l_3')$ be $2$-term sh Leibniz algebras. A morphism between them consists of a chain map of underlying chain complexes (i.e, linear maps $f_0 : A_0 \rightarrow A_0'$ and $f_1 : A_1 \rightarrow A_1'$ satisfying $d' \circ f_1 = f_0 \circ d$) and a bilinear map $f_2 : A_0 \times A_0 \rightarrow A_1'$ satisfying
\begin{itemize}
\item $d ( f_2 (x, y)) = f_0 ( l_2 (x, y)) - l_2' ( f_0 (x), f_0 (y)),$
\item $f_2 ( x, dm) = f_1 ( l_2 (x, m)) - l_2' ( f_0 (x), f_1 (m)$,
\item $f_2 ( dm, x) = f_1 ( l_2 (m,x)) - l_2' ( f_1 (m), f_0 (x)),$
\item $f_1 ( l_3 (x,y,z)) + l_2' ( f_0 (x, y), f_0 (z)) - l_2' ( f_2 (x,z), f_0 (y)) - l_2' ( f_0 (x), f_2 (y,z)) \\
+ f_2 ( l_2 (x,y), z) - f_2 ( l_2 (x, z), y) - f_2 ( x, l_2 (y, z)) - l_3' ( f_0 (x), f_0 (y), f_0 (z))= 0$,
\end{itemize}
for $x, y, z \in A_0$ and $m \in A_1$.
\end{defn}

We denote the category of $2$-term sh Leibniz algebras and morphisms between them by ${\bf 2Leib}_\infty$.
 
 \begin{defn}\label{homo-deri}
 Let $(A_1 \xrightarrow{d} A_0, l_2, l_3)$ be a $2$-term sh Leibniz algebra. A homotopy derivation on it consists of a chain map of the underlying chain complex (i.e, linear maps $\theta_0 : A_0 \rightarrow A_0$ and $\theta_1 : A_1 \rightarrow A_1$ satisfying $d \circ \theta_1 = \theta_0 \circ d$) and a bilinear map $\theta_2 : A_0 \times A_0 \rightarrow A_1$ satisfying the following: for $x, y, z \in A_0$ and $m \in A_1$,
 \begin{itemize}
 \item[(a)] $d \theta_2 (x, y) = \theta_0 ( l_2 (x,y)) - l_2 ( \theta_0 (x), y) - l_2 (x, \theta_0 (y)),$
 \item[(b)] $\theta_2 (x, dm) = \theta_1 ( l_2 (x, m)) - l_2 ( \theta_0 (x), m) - l_2 (x, \theta_1 (m)),$
 \item[(c)] $\theta_2 (dm, x) = \theta_1 (l_2 (m, x)) - l_2 ( \theta_1 (m), x) - l_2 ( m, \theta_0 (x)),$
 \item[(d)] $l_3 ( \theta_0 (x), y,z) + l_3 (x, \theta_0 (y), z) + l_3 (x, y, \theta_0 (z)) - \theta_1 l_3 (x, y, z) \\
 = l_2 ( \theta_2 (x, y), z) - l_2 ( \theta_2 (x, z), y) - l_2 (x, \theta_2 (y,z)) 
 + \theta_2 (l_2 (x, y), z) - \theta_2 ( l_2(x, z), y) - \theta_2 (x, l_2 (y, z)).$
 \end{itemize}
 \end{defn}
 
 A $2$-term sh Leibniz algebra with a homotopy derivation as above may be denoted by the pair $((A_1 \xrightarrow{d} A_0, l_2, l_3), (\theta_0, \theta_1, \theta_2)).$ Such a pair is called a $2\mathrm{LeibDer}_\infty$ pair.
 
 \begin{defn}
Let $((A_1 \xrightarrow{d} A_0, l_2, l_3), (\theta_0, \theta_1, \theta_2))$ and
 $((A_1' \xrightarrow{d'} A_0', l_2', l_3'), (\theta_0', \theta_1', \theta_2'))$ be $2\mathrm{LeibDer}_\infty$ pairs. A morphism between them consists of a morphism $(f_0, f_1, f_2)$ between the underlying $2$-term sh Leibniz algebras and a linear map $\mathcal{B}: A_0 \rightarrow A_1'$ satisfying
 \begin{itemize}
\item $f_0 ( \theta_0 (x)) - \theta_0' ( f_0 (x)) = d' (\mathcal{B}(x)),$
\item $f_1 (\theta_1 (m)) - \theta_1' (f_1(m)) = \mathcal{B} (dm),$
\item $f_1 (\theta_2 (x, y)) - \theta_2' ( f_0 (x), f_0 (y)) = \theta_1' ( f_2 (x, y)) - f_2 ( \theta_0 (x), y) - f_2 (x, \theta_0 (y)) \\
+ \mathcal{B} (l_2 (x, y)) - \mu_2' ( \mathcal{B}(x), f_0 (y)) - \mu_2' ( f_0 (x), \mathcal{B}(y)).$
\end{itemize}  
\end{defn}

We denote the category of $2\mathrm{LeibDer}_\infty$ pairs and morphisms between them by ${\bf 2LeibDer}_\infty$ pair.
 
 \begin{prop}\label{skeletal-prop}
 There is a one-to-one correspondence between skeletal $2$-term sh Leibniz algebras with homotopy derivations and triples $((\mathfrak{g}, \phi_\mathfrak{g}), (M, \phi_M), (\theta, \overline{\theta}))$ where $(\mathfrak{g}, \phi_\mathfrak{g})$ is a LeibDer pair, $(M, \phi_M)$ is a representation and $(\theta, \overline{\theta}) \in C^3_{\mathrm{LeibDer}} (\mathfrak{g}, M)$ is a $3$-cocycle in the cohomology of the LeibDer pair $(\mathfrak{g}, \phi_\mathfrak{g})$ with coefficients in $(M, \phi_M).$
 \end{prop}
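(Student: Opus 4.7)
The strategy is to read off each piece of the triple $((\mathfrak{g},\phi_\mathfrak{g}), (M,\phi_M), (\theta,\overline{\theta}))$ directly from the components of a skeletal $2$-term sh Leibniz algebra with homotopy derivation, and to verify that all axioms on the two sides match term by term. Concretely, given a skeletal $(A_1 \xrightarrow{0} A_0, l_2, l_3)$ with homotopy derivation $(\theta_0, \theta_1, \theta_2)$, I would set $\mathfrak{g}:=A_0$, $M:=A_1$, $\phi_\mathfrak{g}:=\theta_0$, $\phi_M:=\theta_1$, $\theta:=l_3$, and $\overline{\theta}:=\theta_2$; the inverse construction is then forced.

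The first step is to invoke the classification of skeletal $2$-term sh Leibniz algebras recalled just above the statement, which already supplies that $(\mathfrak{g}, l_2)$ is a Leibniz algebra, $M$ becomes a representation of $\mathfrak{g}$ (with the two actions given by the restrictions of $l_2$), and $l_3$ is a $3$-cocycle in $C^3(\mathfrak{g}, M)$. This takes care of every sh-Leibniz axiom that does not mention the derivation data. What remains is to extract the derivation-theoretic content from conditions (a)--(d) of Definition \ref{homo-deri} under the simplification $d=0$. Condition (a) collapses to $\theta_0 l_2(x,y) = l_2(\theta_0 x,y) + l_2(x,\theta_0 y)$, promoting $(\mathfrak{g},\phi_\mathfrak{g})$ to a LeibDer pair. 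Conditions (b) and (c) (whose left-hand sides vanish) become exactly the representation identities (\ref{rep-id-1}) and (\ref{rep-id-2}), so $(M,\phi_M)$ is a representation of $(\mathfrak{g},\phi_\mathfrak{g})$.

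The crucial step is condition (d). Its left-hand side is, by the very definition of the auxiliary operator $\delta$, precisely $(\delta\theta)(x,y,z)$; and expanding its right-hand side against the Leibniz coboundary formula of Section~$2$ identifies it with $\pm(\delta_L\overline{\theta})(x,y,z)$. Hence (d) is equivalent to $\delta_L\overline{\theta} + (-1)^3\delta\theta = 0$, which, combined with $\delta_L\theta = 0$ already known from the sh-Leibniz axioms, is precisely the statement that $\partial(\theta,\overline{\theta}) = 0$ in $C^\ast_{\mathrm{LeibDer}}(\mathfrak{g},M)$. The converse direction proceeds by reading each of these translations in reverse: given the triple, set $d=0$, $l_2$ from the bracket and the two actions, $l_3 = \theta$, and $(\theta_0, \theta_1, \theta_2) = (\phi_\mathfrak{g}, \phi_M, \overline{\theta})$, and verify axioms (a)--(d) one by one. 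The main obstacle I foresee is careful bookkeeping of signs in this last step, ensuring that the factor $(-1)^n$ in the LeibDer coboundary aligns with the signs written in condition (d); once that calibration is correct, all verifications reduce to routine rewriting.
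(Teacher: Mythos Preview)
Your approach is exactly the paper's: read off the LeibDer pair, the representation, and the $3$-cocycle from conditions (a)--(d) of Definition~\ref{homo-deri} with $d=0$, and run the dictionary in reverse for the converse. The one concrete slip is the sign in the identification of $\overline{\theta}$. With your convention $\overline{\theta}:=\theta_2$, condition~(d) unwinds to $\delta_L\theta_2+\delta l_3=0$ (the right-hand side of (d) equals $-\delta_L\theta_2$, not $+\delta_L\theta_2$), whereas the $3$-cocycle condition for $(\theta,\overline{\theta})$ reads $\delta_L\overline{\theta}+(-1)^3\delta\theta=\delta_L\overline{\theta}-\delta\theta=0$. These do not match. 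The paper resolves this by taking the cocycle to be $(l_3,\,-\theta_2)$, i.e.\ $\overline{\theta}=-\theta_2$, and correspondingly $\theta_2=-\overline{\theta}$ in the inverse construction; with that sign everything lines up. You already flagged the sign calibration as the main hazard --- it is, and that is the only correction needed.
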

 
 \begin{proof}
 Let $((A_1 \xrightarrow{0} A_0, l_2, l_3), (\theta_0, \theta_1, \theta_2))$ be a skeletal $2$-term sh Leibniz algebra with a homotopy derivation. Then it follows from Definition \ref{homo-deri} (a) that $\theta_0$ is a derivation for the Leibniz algebra $(A_0, l_2)$. The conditions (b) and (c) of Definition \ref{homo-deri} says that the pair $(A_1, \theta_1)$ is a representation of the LeibDer pair $(A_0, \theta_0)$. Finally, the condition (d) implies that $\delta_L (\theta_2 ) + \delta (l_3) = 0$. Thus $(l_3, - \theta_2 ) \in C^3_{\mathrm{LeibDer}} ( A_0, A_1)$ is a $3$-cocycle in the cohomology of the LeibDer pair $(A_0, \theta_0)$ with coefficients in the representation $(A_1, \theta_1)$.
 
 For the converse part, let $((\mathfrak{g}, \phi_\mathfrak{g}), (M, \phi_M), (\theta, \overline{\theta}))$ be such a triple. Then it is easy to see that $((M \xrightarrow{0} \mathfrak{g}, l_2= [~,~], l_3 = \theta), (\phi_\mathfrak{g}, \phi_M, - \overline{\theta} ))$ is a skeletal $2$-term sh Leibniz algebra. These two correspondences are inverses to each other.
 \end{proof}
 
 A $2$-term sh Leibniz algebra $(A_1 \xrightarrow{d} A_0, l_2, l_3)$ is called strict if $l_3 =0$. Further, a homotopy derivation $(\theta_0, \theta_1, \theta_2)$ on it is said to be strict if $\theta_2 = 0$.
 
 Crossed module of Leibniz algebras was introduced in \cite{loday-pira}. See also \cite{sheng-liu}. Here we extend this notion by introducing crossed module of LeibDer pairs.
 
 \begin{defn}\label{crossed-defn}
 A crossed module of LeibDer pairs consists of a quadruple $( (\mathfrak{g}, \phi_\mathfrak{g}), (\mathfrak{h}, \phi_\mathfrak{h}), dt, \phi )$ in which $(\mathfrak{g}, \phi_\mathfrak{g}), (\mathfrak{h}, \phi_\mathfrak{h}) $ are both LeibDer pairs, $dt : \mathfrak{g} \rightarrow \mathfrak{h}$ is a morphism of LeibDer pairs and 
 \begin{align*}
 \phi: \mathfrak{h} \times \mathfrak{g} \rightarrow \mathfrak{g} \qquad \qquad \phi : \mathfrak{g} \times \mathfrak{h} \rightarrow \mathfrak{g}
\end{align*}  
defines a representation of the LeibDer pair $(\mathfrak{h}, \phi_\mathfrak{h})$ on $(\mathfrak{g}, \phi_\mathfrak{g})$ satisfying the following conditions: for $m, n \in \mathfrak{g}$ and $x \in \mathfrak{h}$,
\begin{itemize}
\item[(a)] $dt ( \phi (x, m)) = [ x, dt (m)]$,\\
$dt ( \phi (m, x)) = [ dt(m), x],$
\item[(b)] $[ dt (m), n ] = [m, n],$\\
$[m, dt (n)] = [m, n],$
\item[(c)] $\phi ([m,n], x) = [\phi (m, x), n] + [m, \phi (n, x)],$\\
$ [ \phi (x, m), n] = [ \phi (x, n), m] + \phi (x, [m,n]),$\\
$[\phi (m, x), n] = \phi ([m,n], x) + [m, \phi (x, n)],$
\item[(d)] $\phi_\mathfrak{g} ( \phi (x, m)) = \phi ( \phi_\mathfrak{h}(x), m) + \phi (x, \phi_\mathfrak{g} (m),$\\
$\phi_\mathfrak{g} (\phi (m, x)) = \phi ( \phi_\mathfrak{g} (m), x) + \phi (m, \phi_\mathfrak{h} (x)).$
\end{itemize}
 \end{defn}
 
 When $\phi_\mathfrak{g} = 0, \phi_\mathfrak{h} = 0$ one simply get crossed module of Leibniz algebras. Therefore, the condition (d) of the above definition is absent in a crossed module of Leibniz algebras.
 
 \begin{thm}\label{thm-crossed-leibder}
 There is a one-to-one correspondence between strict $2$-term sh Leibniz algebras with strict homotopy derivations and crossed module of LeibDer pairs.
 \end{thm}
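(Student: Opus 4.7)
The plan is to establish the correspondence in both directions and then verify they are mutually inverse. Throughout, the dictionary will be $\mathfrak{h} \leftrightarrow A_0$, $\mathfrak{g} \leftrightarrow A_1$, $dt \leftrightarrow d$, $\phi_\mathfrak{h} \leftrightarrow \theta_0$, $\phi_\mathfrak{g} \leftrightarrow \theta_1$, with the representation maps $\phi$ in Definition \ref{crossed-defn} identified with the mixed binary operations $l_2 : A_0 \times A_1 \to A_1$ and $l_2 : A_1 \times A_0 \to A_1$.

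First I would start from a strict $2$-term sh Leibniz algebra $(A_1 \xrightarrow{d} A_0, l_2)$ equipped with a strict homotopy derivation $(\theta_0, \theta_1, 0)$ and build a crossed module. The bracket on $A_0$ is already given by $l_2|_{A_0\times A_0}$, and the Leibniz identity follows from the fourth axiom of Definition \ref{homo-deri}'s ambient structure with $l_3=0$. The bracket on $A_1$ I would define by $[m,n]_{A_1} := l_2(dm, n) = l_2(m, dn)$; well-definedness is exactly the axiom $l_2(dm, n) = l_2(m, dn)$, and the Leibniz identity is obtained by expanding $[[m,n],p]_{A_1} = l_2(l_2(dm, dn), p)$ and applying the mixed identity $l_2(l_2(x,y),p) = l_2(l_2(x,p),y) + l_2(x, l_2(y,p))$ (with $x = dm$, $y = dn$). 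The three mixed strict identities translate directly into the axioms (MLL), (LML), (LLM) that make $A_1$ a representation of the Leibniz algebra $A_0$. Since $\theta_0 \circ d = d \circ \theta_1$, one checks $dt$ is a LeibDer morphism, and axioms (b),(c) of Definition \ref{homo-deri} upgrade this to give the representation of the LeibDer pair $(A_0,\theta_0)$ on $(A_1,\theta_1)$. Axioms (a)--(c) of Definition \ref{crossed-defn} fall out of the compatibility of $d$ with $l_2$, the definition of $[-,-]_{A_1}$, and the mixed Leibniz identities; axiom (d) is exactly the derivation conditions (b),(c) of Definition \ref{homo-deri}.

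For the converse direction, given a crossed module $((\mathfrak{g},\phi_\mathfrak{g}),(\mathfrak{h},\phi_\mathfrak{h}),dt,\phi)$, I would set $A_0 := \mathfrak{h}$, $A_1 := \mathfrak{g}$, $d := dt$, define $l_2$ on $A_0 \times A_0$ by the bracket of $\mathfrak{h}$ and on $A_0 \times A_1$, $A_1 \times A_0$ by the actions $\phi$, and take $l_3 = 0$, $\theta_2 = 0$, $\theta_0 = \phi_\mathfrak{h}$, $\theta_1 = \phi_\mathfrak{g}$. The four strict $2$-term identities to verify are: (i) $dl_2(x,m) = l_2(x,dm)$ and $dl_2(m,x) = l_2(dm,x)$, which are the two halves of axiom (a); (ii) $l_2(dm,n) = l_2(m,dn)$, which is axiom (b) rewritten as $[dt(m),n]_\mathfrak{g} = [m,n]_\mathfrak{g} = [m,dt(n)]_\mathfrak{g}$ together with the action interpretation; (iii) the three mixed Leibniz identities, which are precisely the three clauses of axiom (c); and (iv) the derivation conditions (b),(c) of Definition \ref{homo-deri}, which become axiom (d). All remaining conditions involving $l_3$ or $\theta_2$ are trivial.

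Finally I would note that the two assignments are inverse: starting from a strict $2$-term sh data and passing through the crossed module returns the same $A_0, A_1, d, l_2, \theta_0, \theta_1$ tautologically, because the bracket on $A_1$ recovered as $\phi(dt(m), n)$ equals $l_2(dm,n)$, which is the expression used to define it. The reverse composite is also tautological, since all structure maps are transported verbatim. The main obstacle is bookkeeping the many mixed identities and checking that each strict $2$-term sh axiom matches exactly one clause of Definition \ref{crossed-defn}; once the dictionary above is fixed, the verifications are routine but numerous, and the only subtle point is the well-definedness of $[-,-]_{A_1}$ via the identity $l_2(dm,n) = l_2(m,dn)$, which is what ensures $dt$ can serve as the peripheral map of the crossed module.
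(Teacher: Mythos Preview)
Your proposal is correct and follows essentially the same approach as the paper: the paper invokes \cite{sheng-liu} for the underlying bijection between strict $2$-term sh Leibniz algebras and crossed modules of Leibniz algebras (with the bracket on $A_1$ defined exactly as you do), and then observes that conditions (b),(c) of Definition~\ref{homo-deri} match condition (d) of Definition~\ref{crossed-defn} and that $\theta_0 \circ d = d \circ \theta_1$ makes $dt$ a LeibDer morphism. You have simply unpacked the cited correspondence and the converse direction in more detail than the paper does.
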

 
 \begin{proof}
 It has been proved in \cite{sheng-liu} that strict $2$-term sh Leibniz algebras are in one-to-one correspondence with crossed module of Leibniz algebras. More precisely, $(A_1 \xrightarrow{d} A_0, l_2, l_3= 0)$ is a strict $2$-term sh Leibniz algebra if and only if $(A_1, A_0, d, l_2)$ is a crossed module of Leibniz algebras, where the Leibniz algebra structure on $A_1$ and $A_0$ are respectively given by $[m,n]:= l_2 (dm, n) = l_2 (m, dn)$ and $[x, y] = l_2 (x, y)$, for $m, n \in A_1$ and $x, y \in A_0$. It follows from Definition \ref{homo-deri} that $\theta_1$ is a derivation on the Leibniz algebra $A_1$ and $\theta_0$ is a derivation on the Leibniz algebra $A_0$. In other words $(A_1, \theta_1)$ and $(A_0, \theta_0)$ are both LeibDer pairs. Since $\theta_0 \circ d = d \circ \theta_1$, the map $dt = d : A_1 \rightarrow A_0$ is a morphism of LeibDer pairs. Finally, the conditions (b) and (c) of Definition \ref{homo-deri} are equivalent to the last condition of Definition \ref{crossed-defn}. Hence the proof.
 \end{proof}

\section{Categorification of LeibDer pairs}\label{sec-2-der}
Leibniz $2$-algebras are categorification of Leibniz algebras \cite{sheng-liu}. In this section, we introduce categorified derivations (also called $2$-derivations) on Leibniz $2$-algebras.

Let {\bf Vect} denote the category of vector spaces. A $2$-vector space is a category internal to the
category {\bf Vect}. Thus, a $2$-vector space $\mathbb{V}$ is a category with a vector space of objects $V_0$ and a vector space of morphisms $V_1$ such that all structure maps are linear. Let $s, t : V_1 \rightarrow V_0$ be the source and target maps respectively.  Given a $2$-vector space $\mathbb{V} = ( V_1 \rightrightarrows V_0)$, we have a $2$-term chain complex $\text{ker}(s) \xrightarrow{t} V_0$. Conversely, any $2$-term chain complex $A_1 \xrightarrow{d} A_0$ gives rise to a $2$-vector space $\mathbb{A} = (A_0 \oplus A_1 \rightrightarrows A_0)$ with space of objects $A_0$ and space of morphisms given by $A_0 \oplus A_1$; the structure maps are given by $s ( x \oplus m) = x, ~ t ( x \oplus m) = x + dm$, for $x \in A_0$ and $m \in A_1$. It has been shown in \cite{baez-crans} that the category of $2$-vector spaces and the category of $2$-term chain complexes are equivalent.

\begin{defn}
A Leibniz $2$-algebra is a $2$-vector space $\mathbb{V}$ with a bilinear functor $[~, ~] : \mathbb{V} \times \mathbb{V} \rightarrow \mathbb{V}$ and a trilinear natural isomorphism
\begin{align*}
J_{x, y, z} : [[x, y], z] \rightarrow [[x, z], y] + [x, [y, z]] , ~~~ \text{ for } x, y, z \in {V_0}
\end{align*}
commuting the following diagram
\[
\xymatrix{
[[[x,y],z], w] \ar[r]^{J_{[x,y], z, w}} \ar[d]_{J_{x, y,z}} & [[[x, y], w], z] + [[x, y], [z, w]] \ar[d]^{J_{x, y, w} + 1 }\\
[[[x,z], y] + [x, [y, z]], w]  \ar[d]_{J_{[x,z], y, w} + J_{x, [y,z], w}} & [[[x,w], y] + [x, [y,w]], z] + [[x, y], [z, w]]  \ar[d]^{J_{[x,w],y,z} + J_{x, [y,w], z} + J_{x, y, [z, w]} } \\
[[[x,z], w], y] + [[x, z], [y, w]] + [[x, w], [y, z]] + [x, [[y,z], w]] \ar[r]_{J_{x,z, w} + 1+ 1+ J_{y,z, w}} &  P 
}
\]
where $P = [[[x,w], z], y] + [[x, [z,w]], y] + [[x, z], [y, w]] + [[x, w], [y, z]] + [x, [[y, w], z]] + [x, [y, [z,w]]].$
\end{defn}

\begin{defn}
Let $(\mathbb{V}, [~, ~], J)$ and $(\mathbb{V}', [~, ~]', J')$ be two Leibniz $2$-algebras. A morphism between them consists of a linear functor $F = (F_0, F_1)$ from the underlying vector space $\mathbb{V}$ to $\mathbb{V}'$, a natural transformation
\begin{align*}
F_2 (x, y) : [ F_0 (x), F_0 (y)]' \rightarrow F_0 [x, y]
\end{align*}
such that the following diagram commute
\[
\xymatrix{
[[ F_0 (x), F_0 (y)]', F_0 (z)]' \ar[r]^{J'} \ar[d]_{[F_2, 1]'} & [[F_0(x), F_0 (z)]', F_0 (y)]' + [F_0 (x), [F_0 (y), F_0 (z)]']' \ar[d]^{[F_2, 1]' + [1, F_2]'} \\
[ F_0 [x,y], F_0 (z)]' \ar[d]_{F_2} & [ F_0 [x, z] , F_0 (y)]' + [ F_0 (x), F_0 [y,z] ]' \ar[d]^{F_2 + F_2} \\
F_0 [[x,y],z] \ar[r]_{F_0 (J)} & F_0 ([[x,z], y] + [x, [y,z]]).
}
\]
\end{defn}

Leibniz $2$-algebras and morphisms between them forms a category. We denote this category by ${\bf Leib2}$. 

In the next, we define $2$-derivations on Leibniz $2$-algebras. They are categorification of derivations on Leibniz algebras.

\begin{defn}
Let $(\mathbb{V}, [~,~], J)$ be a Leibniz $2$-algebra. A $2$-derivation on it consists of a linear functor $D : \mathbb{V} \rightarrow \mathbb{V}$ and a natural isomorphism
\begin{align*}
\mathcal{D}_{x, y} : D[x, y] \rightarrow [Dx, y] + [x, Dy], ~~~ \text{ for } x, y \in {V}_0
\end{align*}
making the following diagram commutative
\[
\xymatrix{
D[[x,y],z] \ar[r]^J \ar[d]_{\mathcal{D}_{[x,y],z}} & D ( [[x,z], y] + [x, [y,z]]) \ar[d] \\
[D[x,y], z] + [[x,y], Dz] \ar[d]_{[\mathcal{D}, 1] + 1} & [D[x,z], y] + [[x,z], Dy] + [Dx, [y,z]] + [x, D[y,z]] \ar[d]^{[\mathcal{D},1]+1+1+[1,\mathcal{D}]} \\
[  [Dx, y] + [x, Dy], z ] + [[x,y], Dz] \ar[r]_{J + J + J} & Q 
}
\]
where $Q = [[Dx,z], y] + [[x, Dz], y] + [[x,z], Dy] + [Dx, [y,z]] + [x, [Dy,z]] + [x, [y, Dz]]$.
\end{defn}

We call a Leibniz $2$-algebra with a $2$-derivation, a LeibDer$2$ pair.

\begin{defn}
Let $(\mathbb{V}, [~,~], J, D, \mathcal{D})$ and $(\mathbb{V}', [~,~]', J', D', \mathcal{D}')$ be two LeibDer$2$ pairs. A morphism between them consists of a Leibniz $2$-algebra morphism $(F = (F_0, F_1), F_2)$ and a natural isomorphism
\begin{align*}
\Phi_x : D' (F_0 (x)) \rightarrow F_0 (D(x)), ~~~ \text{ for } x \in {V}_0
\end{align*} 
which makes the following diagram commutative
\[
\xymatrix{
D' ([F_0 (x), F_0 (y)]') \ar[r]^{F_2} \ar[d]_{\mathcal{D}'} & D' (F_0 [x,y]) \ar[d]^{\Phi_{[x,y]}} \\
[D' (F_0 (x)) , F_0 (y)]' + [F_0 (x), D' (F_0 (y))]'  \ar[d]_{[\Phi_x, 1]' + [1, \Phi_y]'} & F_0 (D[x,y])  \ar[d]^{\mathcal{D}}\\
[F_0 (D(x)), F_0 (y)]' + [F_0 (x), F_0 (D(y))]' \ar[r]_{F_2 + F_2} & F_0 ([Dx, y] + [x, Dy]). 
}
\]
\end{defn}
We denote the category of LeibDer$2$ pairs and morphisms between them by {\bf LeibDer2}.

It is shown in \cite{sheng-liu} that the category ${\bf Leib2}$ is equivalent to the category ${\bf 2Leib}_\infty$. This generalizes the similar theorem for the case of Lie algebras \cite{baez-crans}. Let us recall the construction of a $2$-term sh Leibniz algebra from a Leibniz $2$-algebra and a Leibniz $2$-algebra from a $2$-term sh Leibniz algebra \cite{sheng-liu}.

Let $(\mathbb{V}, [~, ~], J)$ be a Leibniz $2$-algebra. Then $(\mathrm{ker}(s) \xrightarrow{t} V_0, l_2, l_3)$ is a $2$-term sh Leibniz algebra where
\begin{align*}
~&l_2 (x, y) := [x,y], ~~~~ l_2 (x, m) := [x, m], ~~~~ l_2 (m,x) := [m,x], ~~~~ l_2 (m,n):= 0  \text{ and }\\
~&l_3 (x, y, z) := \mathrm{pr}(J_{x, y,z }),
\end{align*} 
for $x, y \in V_0, ~m , n \in \mathrm{ker} (s)$ and $\mathrm{pr}$ denote the projection on $\mathrm{ker}(s).$

Conversely, given a $2$-term sh Leibniz algebra $(A_1 \xrightarrow{d} A_0, l_2, l_3)$, the corresponding Leibniz $2$-algebra is defined on the $2$-vector space $\mathbb{A} = ( A_0 \oplus A_1 \rightrightarrows A_0)$ with
\begin{align*}
[x \oplus m, y \oplus n] :=~& l_2 (x, y) \oplus l_2 (x, n) + l_2 (m, y) + l_2 (m, dn),\\
J_{x, y, z} :=~& ( [[x, y], z], l_3 (x, y, z)).
\end{align*}

\begin{thm}\label{thm-equiv-cat}
The categories {\bf LeibDer2} and ${\bf 2LeibDer}_\infty$ are equivalent.
\end{thm}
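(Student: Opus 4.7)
The plan is to extend the known equivalence $\mathbf{Leib2} \simeq \mathbf{2Leib}_\infty$ of \cite{sheng-liu}, which is already described explicitly in the paragraphs preceding the theorem, by showing that a $2$-derivation on a Leibniz $2$-algebra $\mathbb{V}$ is exactly the same data as a homotopy derivation on the associated $2$-term sh Leibniz algebra, and that this correspondence is functorial. Since the underlying equivalence of categories is given by explicit mutually quasi-inverse functors $\Phi : \mathbf{Leib2} \to \mathbf{2Leib}_\infty$ and $\Psi : \mathbf{2Leib}_\infty \to \mathbf{Leib2}$, it will suffice to upgrade $\Phi$ and $\Psi$ to functors between $\mathbf{LeibDer2}$ and $\mathbf{2LeibDer}_\infty$ and to verify that the extended functors are again quasi-inverses.

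First I would construct $\Phi$ on objects. Let $(\mathbb{V}, [~,~], J, D, \mathcal{D})$ be a LeibDer$2$ pair with associated $2$-term sh Leibniz algebra $(\ker(s) \xrightarrow{t} V_0, l_2, l_3)$. Since $D = (D_0, D_1)$ is a linear functor, $s \circ D_1 = D_0 \circ s$, so $D_1$ restricts to a map $\theta_1 : \ker(s) \to \ker(s)$; set $\theta_0 := D_0$. The relation $t \circ D_1 = D_0 \circ t$ gives $d \circ \theta_1 = \theta_0 \circ d$. Next, the natural isomorphism $\mathcal{D}_{x,y} : D[x,y] \to [Dx,y] + [x, Dy]$ has fixed source and variable target, and in the $2$-vector space such a morphism is determined by an element of $\ker(s)$; define $\theta_2(x, y) := \mathrm{pr}(\mathcal{D}_{x,y})$, where $\mathrm{pr}$ is projection onto $\ker(s)$. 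Applying $t$ to both sides of $\mathcal{D}_{x,y}$ then yields condition (a) of Definition \ref{homo-deri}. The naturality of $\mathcal{D}$ applied to morphisms of the form $x \oplus 0 \xrightarrow{} x + dm$ (or the symmetric case on the right slot) reproduces conditions (b) and (c). Finally, the hexagonal coherence diagram for $\mathcal{D}$, when paired with the pentagon for $J$ via the definitions $l_3(x,y,z) = \mathrm{pr}(J_{x,y,z})$ and $\theta_2 = \mathrm{pr}(\mathcal{D})$, reduces to the five-term sh relation (d); this is the heart of the computation and is where the bulk of the work lies.

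For morphisms, I would use the same recipe: given a morphism $(F, F_2, \Phi)$ of LeibDer$2$ pairs, define $f_0 := F_0$, $f_1 := F_1|_{\ker(s)}$, $f_2(x,y) := \mathrm{pr}(F_2(x,y))$, and $\mathcal{B}(x) := \mathrm{pr}(\Phi_x)$. The functoriality axioms on $F_2$ translate to the morphism axioms of $\mathbf{2Leib}_\infty$ (as in \cite{sheng-liu}), and the commuting square defining a morphism of LeibDer$2$ pairs translates, after projection, to the three axioms defining a morphism in $\mathbf{2LeibDer}_\infty$; the linearity of source/target maps ensures $\mathcal{B}$ satisfies $d' \mathcal{B}(x) = f_0(\theta_0 x) - \theta_0' f_0(x)$ and $\mathcal{B}(dm) = f_1(\theta_1 m) - \theta_1' f_1(m)$. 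In the opposite direction, given a homotopy derivation $(\theta_0, \theta_1, \theta_2)$ on $(A_1 \xrightarrow{d} A_0, l_2, l_3)$, define $D_0 := \theta_0$, $D_1(x \oplus m) := \theta_0(x) \oplus \theta_1(m) + \theta_2$-contribution as required to make $D$ a functor into $\mathbb{A} = (A_0 \oplus A_1 \rightrightarrows A_0)$, and set $\mathcal{D}_{x,y} := ([Dx, y] + [x, Dy], -\theta_2(x,y))$ read as a morphism in $\mathbb{A}$ from $D[x,y]$ to $[Dx,y] + [x, Dy]$ via $t(\theta_2(x,y)) = [\theta_0 x, y] + [x, \theta_0 y] - \theta_0[x,y]$; do the analogous construction for morphisms using $\mathcal{B}$.

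Once both functors are defined, checking that they are mutually quasi-inverse reduces to the corresponding check for $\Phi$ and $\Psi$, already established in \cite{sheng-liu}, together with the observation that the extra data on each side reconstructs from its image under projection. The main obstacle, as noted, is the purely diagrammatic verification that the hexagon axiom for $\mathcal{D}$ is equivalent to the sh-Leibniz identity (d) for $\theta_2$; this is tedious but routine, and amounts to carefully tracking source/target of each morphism in the hexagon and applying the already-established correspondence $l_3 \leftrightarrow J$ componentwise.
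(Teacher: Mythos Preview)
Your approach is essentially the same as the paper's: both extend the Sheng--Liu equivalence $\mathbf{Leib2}\simeq\mathbf{2Leib}_\infty$ by transporting the derivation data via $\theta_0=D_0$, $\theta_1=D_1|_{\ker(s)}$, $\theta_2=\mathrm{pr}\circ\mathcal{D}$ in one direction and $D=(\theta_0,\theta_1)$, $\mathcal{D}_{x,y}=(\text{source},\theta_2(x,y))$ in the other, with morphisms handled analogously; the paper simply sketches these assignments and defers the coherence checks to \cite{baez-crans}, whereas you spell out more of why conditions (a)--(d) of Definition~\ref{homo-deri} match the $2$-derivation axioms. One small point: in the reverse direction no ``$\theta_2$-contribution'' is needed in $D_1$ --- the linear functor is just $D(x,m)=(\theta_0(x),\theta_1(m))$, and $\theta_2$ enters only through $\mathcal{D}$; also your formula $\mathcal{D}_{x,y}=([Dx,y]+[x,Dy],-\theta_2(x,y))$ records the morphism by its target rather than its source, so be careful that this matches the $(s,t)$ conventions of the $2$-vector space $\mathbb{A}$ (the paper uses the source convention).
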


\begin{proof}
Here we only sketch the construction of a homotopy derivation from a $2$-derivation and vice versa. The rest of the verifications are similar to \cite[Theorem 4.3.6]{baez-crans}.

Given a LeibDer$2$ pair $(\mathbb{V}, [~,~], J, D, \mathcal{D})$, we define a homotopy derivation on the $2$-term sh Leibniz algebra $(\mathrm{ker}(s) \xrightarrow{t} V_0, l_2, l_3)$ by
\begin{align*}
\theta_0 (x) := D (i(m)), \quad \theta_1 (m) := D|_{\mathrm{ker}(s)} (m) \quad \text{ and } \quad \theta_2 (x, y) := \mathrm{pr} (\mathcal{D}_{x,y}).
\end{align*}
If $(F_0, F_1, F_2, \Phi)$ is a morphism of LeibDer$2$ pairs, then $(f_0 = F_0, f_1 = F_1 |_{\mathrm{ker}(s)}, f_2 = \mathrm{pr} \circ F_2, \mathcal{B} = \Phi)$ is a morphism  between corresponding $2$-term sh Leibniz algebras with homotopy derivations.

Conversely, let $( (A_1 \xrightarrow{d} A_0, l_2, l_3), (\theta_0, \theta_1, \theta_2))$ be a $2$-term sh Leibniz algebra with a homotopy derivation. We define a $2$-derivation $(D, \mathcal{D}')$ on the Leibniz $2$-algebra $(A_0 \oplus A_1 \rightrightarrows A_0, [~,~], J)$ by
\begin{align*}
D(x,m) := (\theta_0 (x), \theta_1 (m)) \quad \text{ and } \quad \mathcal{D}_{x, y} := ( [x,y], \theta_2 (x,y)).
\end{align*}
If $(f_0, f_1, f_2, \mathcal{B})$ is a morphism of $2$-term sh Leibniz algebras with homotopy derivations, then $(F_0 = f_0, F_1 = f_1, F_2(x,y) = ( [f_0 (x) , f_0 (y)]' , f_2 (x,y)), \Phi = \mathcal{B})$ is a morphism between corresponding LeibDer$2$ pairs.
\end{proof}

\section{Conclusions}

In this paper, we consider LeibDer pairs as noncommutative analog of LieDer pairs. We study their (central and abelian) extensions and deformations from cohomological point of view. We define homotopy derivations on sh Leibniz algebras and relate them with categorification of LeibDer pairs.

In \cite{bala1, bala2} the author considered extensions and deformations of algebras over a binary quadratic operad $\mathcal{P}$. The results of the present paper can be extended to $\mathcal{P}$-algebras with derivations. In \cite{das-prep} we plan to systematically study extensions and deformations of a pair of a $\mathcal{P}$-algebra and a derivation on it.

Leibniz algebras play an important role in the study of Courant algebroids \cite{wein-cou}. In \cite{roth, ji} the authors study deformation theory of Courant algebroids from algebraic and Poisson geometric point of view. It might be interesting to explore the importance of derivations in a Courant algebroid and one may extend the results of \cite{roth, ji} in the context of Courant algebroids with derivations.

\medskip

\noindent {\bf Acknowledgements.} The author would like to thank Indian Institute of Technology Kanpur for financial support.

\end{document}